\documentclass[11pt, letterpaper]{article}
\usepackage{fullpage}
\usepackage{amsthm,amsmath,amssymb,amsfonts}

\usepackage[noabbrev,capitalize,nameinlink]{cleveref}
\crefname{equation}{}{}

\newtheorem{thm}{Theorem}[section]

\newtheorem{lem}[thm]{Lemma}
\newtheorem{cor}[thm]{Corollary}

\newcommand{\mc}[1]{\mathcal{#1}}
\newcommand{\mb}[1]{\mathbb{#1}}
\newcommand{\nib}[1]{\noindent {\bf #1}}

\newcommand{\bfl}[1]{\left\lfloor #1 \right\rfloor}
\newcommand{\bcl}[1]{\left\lceil #1 \right\rceil}

\newcommand{\sub}{\subset}

\newcommand{\sups}{\supseteq}

\newcommand{\sm}{\setminus}
\newcommand{\es}{\emptyset}

\newcommand{\aA}{\alpha}
\newcommand{\bB}{\beta}
\newcommand{\gG}{\gamma}
\newcommand{\dD}{\delta}
\newcommand{\eps}{\varepsilon}

\newcommand{\tT}{\theta}

\newcommand{\DD}{\Delta}

\newcommand{\TT}{\Theta}
\newcommand{\Ss}{\Sigma}

\DeclareMathOperator{\col}{col}
\DeclareMathOperator{\var}{var}

\title{The optimal edge-colouring threshold}

\author{Peter Keevash \thanks{Mathematical Institute, University of Oxford, UK. 
Supported by ERC Advanced Grant 883810.} }

\date{\vspace{-5ex}}
\begin{document}
\maketitle

\begin{abstract}
Consider any dense $r$-regular quasirandom $H \sub K_{n,n}$
and fix a set of $r$ colours. Let $L$ be a random list assignment
where each colour is available for each edge of $H$ with probability $p$.
We show that the threshold probability for $H$ 
to have a proper $L$-edge-colouring
is $p = \TT( \tfrac{\log n}{n} )$.
This answers a question of 
Kang, Kelly, K\"uhn, Methuku and Osthus.
We thus obtain the same threshold for
Steiner Triple Systems and Latin squares;
the latter answers a question of Johanssen from 2006.
\end{abstract}

\section{Introduction}

The theory of thresholds in random structures
has recently been revolutionised by the solution
of the Kahn-Kalai Conjecture \cite{KK},
at first in its fractional form
(a conjecture of Talagrand \cite{T}) 
by Frankston, Kahn, Narayanan and Park \cite{FKNP},
and then completely by Park and Pham \cite{PP}.
This recent theory easily implies many previously difficult 
results on thresholds, such as the celebrated solution 
by Johansson, Kahn and Vu \cite{JKV} of Shamir's problem
(reported by Erd\H{o}s \cite{E}) 
on the threshold for perfect matchings in hypergraphs.

Despite this progress, it still seems challenging to determine
thresholds for designs or design-like structures.
Such questions seem to have first been raised by Johansson \cite{J}, 
who asked for an analogue of Shamir's problem for Latin squares.
His formulation uses the identification of a Latin square of order $n$
with a triangle decomposition of $K_{n,n,n}$, and asks for the 
threshold probability $p$ for finding such a decomposition
when each triangle is available independently with probability $p$.
The natural implicit conjecture (by analogy with Shamir's problem)
is that the decomposition exists whp (with high probability)
when every edge is in a triangle, which occurs at $p = \TT(n^{-1}\log n)$.
This implicit conjecture was later explicitly made 
independently by Casselgren and H\"aggkvist \cite{CH}
(in the equivalent form of edge-colouring $K_{n,n}$ from random lists)
and by Luria and Simkin \cite{LS}.

The corresponding questions for designs were not posed explicitly 
until quite recently, no doubt because even the existence of general designs 
was unknown before \cite{K}, although the natural conjecture was clear 
to Kahn and Kalai (personal communication).
An explicit conjecture (in some cases) was formulated by Simkin \cite{S}.
While the general case remains wide open, there has been significant recent
progress on the threshold for Latin squares 
and also Steiner Triple Systems, i.e.\ the threshold probability 
for a random $3$-graph on $n$ vertices to contain
a Steiner Triple System (assuming $n \equiv 1,3$ mod $6$).

A recent breakthrough on these problems due to
Sah, Sawhney and Simkin \cite{SSS} gives an upper bound of $n^{-1+o(1)}$.
This was improved by Kang, Kelly, K\"uhn, Methuku and Osthus \cite{KKKMO} 
to $O(n^{-1} \log^2 n)$, which is within a factor $\log n$ of being optimal.
Both arguments use
the Frankston-Kahn-Narayanan-Park theorem and Iterative Absorption.
The approach taken by \cite{KKKMO} reduces both questions to the problem 
of constructing a sufficiently spread measure on optimal edge-colourings
of a regular nearly-complete bipartite graph.

\subsection{Results}

Our main theorem solves this spread measure problem 
(see the next subsection for definitions
of the terminology used in its statement).

\begin{thm} \label{thm:main}
For any $\dD>0$ there is $K>0$ such that 
if $H \sub K_{n,n}$ is $r$-regular and $(\dD,r/n)$-uniform with $r>\dD n$
then there is a $K/n$-spread probability distribution 
on optimal edge-colourings of $H$.
\end{thm}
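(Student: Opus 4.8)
The plan is to generate the colouring by revealing an ordered perfect-matching decomposition $E(H)=M_1\sqcup\dots\sqcup M_r$ (equivalently, the colour classes) one matching at a time, and to read the spread bound off from step-by-step control. The engine I would use is a single-step lemma: for every sufficiently dense $(\dD',d/n)$-uniform bipartite graph $G$ there is a probability distribution $\nu_G$ on perfect matchings of $G$ such that (i) it is \emph{locally spread}, meaning $\Pr_{M\sim\nu_G}[S\sub M]\le (C/n)^{|S|}$ for every partial matching $S$ of $G$, and (ii) removing $M$ is \emph{robust}: with very high probability $G-M$ is again $(\dD'',(d-1)/n)$-uniform. Such a $\nu_G$ should be obtainable by a nibble-type smoothing of the uniform random matching. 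Given it, sample $M_1\sim\nu_H$, then $M_2\sim\nu_{H-M_1}$, and so on, setting $c(e)=i$ iff $e\in M_i$. For a target $T\sub E(H)\times[r]$ write $T=\bigsqcup_i (S_i\times\{i\})$; the probability of $\langle T\rangle$ is $0$ unless each $S_i$ is a partial matching, and otherwise equals $\mathbb{E}\big[\prod_i\mathbf{1}[M_i\supseteq S_i]\big]$, which one would bound by $\prod_i(C/n)^{|S_i|}=(C/n)^{|T|}$ by peeling the colours off in order and applying (i) at each conditional step.

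To run this to completion I would iterate, using (ii) to keep the remainder quasirandom: after a round that peels off a constant fraction of the colours, the remainder is $(\dD'',d'/n)$-uniform with $d'$ a constant fraction of $d$, hence still dense, so one can recurse into it with the colours not yet used. The uniformity parameter degrades geometrically across the $\TT(\log n)$ rounds needed to drive the number of colours down to $O(1)$, but so does the density, and since the hypothesis ``$r>\dD n$'' is scale-invariant the recursion does not stall; the $O(1)$-colour base case is handled directly, and the per-round local-spread bounds multiply to give $(K/n)^{|T|}$. An alternative finish is to reserve, before the bulk process, a ``wide'' absorbing subgraph in which every edge still has $\OO(n)$ admissible colours --- so that completing it also has $O(1/n)$ marginals --- and use it to mop up; note that the usual rigidity of absorbers must be avoided here, precisely because a rigid gadget would create edges with constant colour-marginals, which the spread condition forbids.

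The step I expect to be the main obstacle is reconciling the global spread bound with the facts that (a) the last colour class is forced, $M_r=H-(M_1\cup\dots\cup M_{r-1})$, so no \emph{pointwise} $O(1/n)$ marginal is available for it, and (b) property (ii) holds only with high probability, so on histories where the remainder degenerates the conditioning argument above breaks down. Point (a) can be finessed using symmetry of the construction under permuting the colours, which forces $\Pr[c(e)=i]=1/r$ for every $i$; but propagating such symmetry to larger partial matchings and through the conditional steps is delicate. Point (b) is the real difficulty: since $(K/n)^{|T|}$ can be as small as $n^{-\TT(n^2)}$ while the failure probability in (ii) is only, say, $e^{-n^{0.99}}$, one cannot simply charge the bad histories to an additive error term once $|T|$ is large. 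Large targets will therefore need a separate treatment --- either an encoding/covering argument reducing the spread condition for arbitrary $T$ to the condition for small $T$, or a genuinely robust design of the process for which the local-spread marginal bound holds along every history that survives the relevant conditioning --- and this is where I would expect most of the work to lie.
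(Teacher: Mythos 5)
There is a genuine gap, and it sits exactly where you locate ``most of the work'': your recursion cannot reach the regime where the theorem is actually hard. Peeling matchings keeps the vertex set fixed at size $n$ while the degree drops, so the hypothesis $r>\dD n$ is \emph{not} scale-invariant along your iteration: after the $\TT(\log n)$ constant-fraction rounds the leftover is a spanning subgraph of $K_{n,n}$ of degree $n^{o(1)}$, and eventually $O(1)$. In that regime your single-step engine cannot exist even in principle: in a $d$-regular bipartite graph every probability distribution on perfect matchings has some edge with marginal at least $1/d \gg 1/n$, so property (i) fails for all late colours, and $(\dD',d/n)$-uniformity with $d=O(1)$ carries no usable content (indeed the paper stresses that the lack of degree concentration at density $O(1/n)$ is the central difficulty). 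Your two escape hatches do not close this: colour-symmetry only gives singleton marginals $1/r$, not the joint bounds for larger sets concentrated in one late colour class; and, as you yourself note in (b), whp-maintenance of quasirandomness cannot be charged against target probabilities as small as $(K/n)^{\TT(n^2)}$ --- one needs conditional bounds valid along \emph{every} surviving history, which your proposal names as desirable but does not construct. So the ``$O(1)$-colour base case handled directly'' plus the acknowledged obstacles (a),(b) together constitute the entire theorem, not a finishing touch.

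For comparison, the paper avoids matching-peeling altogether. It first builds, by a constrained random greedy edge-colouring, a factorisation of $H$ into $m=\TT(n)$ colour classes that are only \emph{approximately} regular of bounded degree $d=O(1)$; the required $O(1/m)=O(1/n)$ conditional spreadness is proved pointwise along every history via a ``monitor'' argument, which is precisely the ``genuinely robust design'' you anticipate but do not supply. It then repairs the approximate regularity by iterative absorption (using the regularisation lemma of Kang--Kelly--K\"uhn--Methuku--Osthus to absorb each round's leftover into the next round's quasirandom classes), and finally refines each exactly regular class into perfect matchings \emph{deterministically}, which preserves spreadness trivially --- so no spread distribution on matchings of a sparse graph is ever needed. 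Spread perfect matchings (the Pham--Sah--Sawhney--Simkin lemma you implicitly invoke) appear only in dense auxiliary bipartite graphs during local ``cleaning'' steps, where they are legitimately available.
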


\cref{thm:main} has the following consequences
which include resolutions of the conjectures discussed above
on the thresholds for Steiner Triple Systems and Latin squares.

\begin{cor} \label{cor:sts}
There is an absolute constant $C>0$ such that 
if $n \equiv 1,3$ mod $6$ and $p \ge \tfrac{C\log n}{n}$
then the Erd\H{o}s-R\'enyi $3$-graph $G^3(n,p)$
whp contains a Steiner Triple System of order $n$.
\end{cor}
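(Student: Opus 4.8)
The plan is to derive \cref{cor:sts} by feeding \cref{thm:main} into the reduction of Kang, Kelly, K\"uhn, Methuku and Osthus~\cite{KKKMO}. Their iterative-absorption scheme already establishes the implication that an $O(1/n)$-spread probability distribution on optimal edge-colourings of a dense regular quasirandom $H\sub K_{n,n}$ yields \cref{cor:sts}; the reason \cite{KKKMO} obtained only the weaker bound $p\ge Cn^{-1}\log^2 n$ is that their edge-colouring distribution was merely $O(n^{-1}\log n)$-spread, and \cref{thm:main} supplies exactly the missing factor. So the substance of the proof is \cref{thm:main}; I outline below how the reduction uses it.

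The first ingredient is the Park--Pham theorem~\cite{PP} (the Kahn--Kalai theorem; the fractional version of Frankston--Kahn--Narayanan--Park~\cite{FKNP} would also suffice): if $\mu$ is a $q$-spread probability distribution on a family $\mc F$ of subsets of a ground set, each of size at most $\ell$, then a $p$-random subset of the ground set whp contains a member of $\mc F$ provided $p\ge Cq\log\ell$ for an absolute constant $C$. I would apply this with ground set $\binom{[n]}{3}$ and $\mc F$ the family of Steiner Triple Systems of order $n$; each such system has $\binom{n}{2}/3=\TT(n^2)$ triples, so $\log\ell=\TT(\log n)$, and it suffices to construct a $q$-spread distribution on Steiner Triple Systems with $q=O(1/n)$. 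Then $G^3(n,p)$ whp contains an STS for $p\ge C'n^{-1}\log n$, which is \cref{cor:sts}.

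Such a distribution is built by iterative absorption, following \cite{SSS,KKKMO}. First one reserves a random \emph{sparse} absorbing partial Steiner system $A$, with $\TT(n^2)$ triples, whose placement is randomised so that $A$ itself has an $O(1/n)$-spread distribution and so that $A$ together with any leftover of the prescribed (small, balanced) type can be completed to an STS deterministically. Then one covers the rest by a sequence of randomised partial-decomposition (``cover-down'') steps: the key point is that each step is encoded as the choice of a proper edge-colouring, from independent random lists, of a dense $r$-regular $(\dD,r/n)$-uniform graph $H_i\sub K_{n,n}$ with $r>\dD n$, so that \cref{thm:main} supplies an $O(1/n)$-spread distribution on its optimal edge-colourings, which pulls back to an $O(1/n)$-spread distribution on the triples chosen at that step. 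After these steps a leftover of the prescribed type remains, and the reserved $A$ absorbs it. Since a product of $q$-spread distributions on disjoint ground sets is again $q$-spread, the composite distribution on Steiner Triple Systems is $O(1/n)$-spread, and \cref{cor:sts} follows from the previous paragraph.

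The technical heart --- already carried out in \cite{KKKMO}, apart from the improved spread parameter now provided by \cref{thm:main} --- is twofold. First, one must check that the leftover at each scale stays sufficiently regular and quasirandom that the associated bipartite graph $H_i$ genuinely satisfies the hypotheses $(\dD,r/n)$-uniformity and $r>\dD n$ of \cref{thm:main}; this forces care in the clean-up after each cover-down step, so that the uncovered pairs and unused triples do not concentrate. Second, one must verify the spread bookkeeping: that the absorber is $O(1/n)$-spread and absorbs every leftover in its design class, that each cover-down step is $O(1/n)$-spread, and that conditioning between the stages does not degrade the product by more than a constant. For the present paper the single new ingredient is \cref{thm:main}: substituting its optimal $K/n$-spread bound for the weaker edge-colouring lemma of \cite{KKKMO} removes the spurious $\log n$ factor and yields the optimal threshold $\TT(n^{-1}\log n)$ for Steiner Triple Systems (and, by the parallel argument, for Latin squares).
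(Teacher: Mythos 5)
Your first paragraph is exactly the paper's proof: \cref{cor:sts} is deduced by citing the conditional reduction of \cite{KKKMO} (their Theorems 1.6 and 1.7), which is applicable for $p \ge C\log n/n$ once an $O(1/n)$-spread distribution on optimal edge-colourings of dense regular uniform bipartite graphs is supplied, and \cref{thm:main} supplies it. Treated as a black-box citation of that reduction, the proposal is correct and is the same approach as the paper.

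However, your sketch of what is inside the reduction misdescribes it, and if it were taken as the actual argument it would be a gap. You claim the reduction produces a single $O(1/n)$-spread probability distribution on Steiner triple systems of $K_n^{(3)}$ (a spread absorber $A$ that deterministically completes \emph{any} leftover of a prescribed type, combined with spread cover-down steps), to which Park--Pham or \cite{FKNP} is applied once. That is not what \cite{KKKMO} prove, and no such global spread measure on STSs is constructed there or in this paper; whether one exists is precisely the well-known open question that \cite{KKKMO} circumvent. Their Theorems 1.6 and 1.7 instead run iterative absorption \emph{inside} the random hypergraph $G^3(n,p)$: the random triples are exposed in stages, each cover-down/completion sub-task is solved whp by applying the spread theorem to a spread measure tailored to that sub-task (the bottleneck being the edge-colouring measure that \cref{thm:main} now makes $O(1/n)$-spread), and the absorption/completion at the bottom of the vortex uses the structure of the leftover rather than a universal deterministic absorber compatible with spreadness. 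This is also why their reduction carries its own hypothesis $p \ge C\log n/n$ rather than following from one application of \cite{FKNP} to a global measure. So you should either cite the reduction as a black box (as the paper does) or reproduce its actual iterative-absorption-in-$G^3(n,p)$ argument, but not assert the existence of an $O(1/n)$-spread distribution on Steiner triple systems, which does not follow from \cref{thm:main}.
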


\begin{cor} \label{cor:latin}
There is an absolute constant $C>0$ such that 
the following (easily equivalent) 
statements hold for $p \ge \tfrac{C\log n}{n}$.

1. An $n$ by $n$ square where $n$ symbols are   
each available for each cell independently with probability $p$ 
whp contains a Latin square of order $n$.

2. The Erd\H{o}s-R\'enyi $3$-partite $3$-graph $G^3(n,n,n;p)$
whp contains a triangle decomposition of
the complete $3$-partite graph $K_{n,n,n}$.

3. If $L$ is a random $(pn,n)$-list assignment for $E(K_{n,n})$ then 
whp there is an $L$-edge-colouring of $K_{n,n}$.
\end{cor}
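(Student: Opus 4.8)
The three statements are equivalent via the standard dictionary: a Latin square of order $n$ is the same thing as an optimal (hence proper $n$-)edge-colouring of $K_{n,n}$, which is the same thing as a triangle decomposition of $K_{n,n,n}$; and on the probabilistic side the availability of a symbol for a cell, of a colour for an edge, and of a triangle all correspond, while a uniformly random list of size $pn$ and the model in which each colour is available for each edge independently with probability $p$ are interchangeable by a routine coupling. So the plan is to prove statement~3 and to deduce it from \cref{thm:main} via the Park--Pham theorem \cite{PP}.

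I would work in the product space $X = E(K_{n,n}) \times [n]$, of size $n^3$: a list assignment $L$ becomes the set $X_L = \{(e,i) : i \in L(e)\}$ and an optimal edge-colouring $c$ becomes $S_c = \{(e,c(e)) : e \in E(K_{n,n})\}$, of size $n^2$, with $c$ respecting $L$ exactly when $S_c \subseteq X_L$. Let $\mathcal{F}$ be the increasing family on $X$ generated by the sets $S_c$, so its minimal members all have size $\ell = n^2$. Since $K_{n,n}$ is $n$-regular and trivially satisfies the uniformity hypothesis of \cref{thm:main} (with $r = n$), that theorem supplies an absolute constant $K$ and a $K/n$-spread probability distribution $\nu$ on optimal edge-colourings of $K_{n,n}$. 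Identifying each $c$ with $S_c$, this is precisely a $K/n$-spread measure on the minimal members of $\mathcal{F}$: for $Z \subseteq X$ the quantity $\nu(\{c : S_c \supseteq Z\})$ is the $\nu$-probability of drawing a colouring that extends the partial colouring coded by $Z$, which the spread bound caps at $(K/n)^{|Z|}$. By the Park--Pham theorem there is an absolute $L$ such that for $p' \ge L(K/n)\log \ell$ a random subset of $X$ at density $p'$ lies in $\mathcal{F}$ whp; that is, a random list assignment in the independent-$p'$ model whp admits an (optimal) edge-colouring. Unwinding through the coupling and then the dictionary, $p \ge \tfrac{C\log n}{n}$ for a suitable absolute constant $C$ gives all three statements.

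All the substance sits in \cref{thm:main}; granting it, the one point needing care is the interface between the two notions of spreadness — one must check that a $K/n$-spread \emph{distribution on colourings}, as produced by \cref{thm:main}, really is a $K/n$-spread \emph{measure on the sets $S_c$}, since the Park--Pham hypothesis demands the bound $\nu(\{c : S_c \supseteq Z\}) \le (K/n)^{|Z|}$ for \emph{every} $Z \subseteq X$ (this holds vacuously unless $Z$ is the graph of a partial proper colouring) — together with the routine coupling lemmas making the $(pn,n)$-list, symbol, and triangle models interchangeable. Neither is a real obstacle.
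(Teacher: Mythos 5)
Your proposal is correct and is essentially the paper's own deduction: apply \cref{thm:main} with $H = K_{n,n}$, $r=n$ to get a $K/n$-spread measure on optimal edge-colourings (viewed as $n^2$-subsets of $E(K_{n,n})\times[n]$), feed it into the spread-measure threshold theorem to get statement~3 at $p \ge \tfrac{C\log n}{n}$, and transfer to statements~1 and~2 by the standard equivalences and coupling. The only cosmetic difference is that the paper invokes the Frankston--Kahn--Narayanan--Park theorem (Talagrand's fractional Kahn--Kalai), which is the form tailored to spread measures, whereas you cite Park--Pham; that is harmless since the fractional expectation threshold dominates the expectation threshold.
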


\begin{cor} \label{cor:lists}
There is an absolute constant $C>0$ such that 
if $L$ is a random $(C\log n,2n-1)$-list assignment for $E(K_{2n})$ then 
whp there is an $L$-edge-colouring of $K_{2n}$.
\end{cor}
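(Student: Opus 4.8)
\noindent\textbf{Proof proposal for \cref{cor:lists}.}
The plan is to deduce \cref{cor:lists} from \cref{thm:main} by the route \cite{KKKMO} use for Steiner triple systems and Latin squares: reformulate the list-colouring problem as a question about a random subset of a ``slot space'', apply the fractional Kahn--Kalai theorem \cite{FKNP}, and feed in a spread measure obtained from iterative absorption together with \cref{thm:main}.

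\emph{Reformulation.} An $L$-edge-colouring of $K_{2n}$ is a proper edge-colouring $\sS\colon E(K_{2n})\to[2n-1]$ with $\sS(e)\in L(e)$ for all $e$; since $K_{2n}$ is $(2n-1)$-regular this is precisely a $1$-factorisation of $K_{2n}$ subordinate to $L$. Let $\OO=E(K_{2n})\times[2n-1]$ and identify a $1$-factorisation $\sS$ with its graph $T_\sS=\bracc{(e,\sS(e)):e\in E(K_{2n})}\sub\OO$, a set of size $\binom{2n}{2}=\TT(n^2)$; then $\sS$ is an $L$-edge-colouring exactly when $T_\sS\sub\OO_L:=\bracc{(e,c):c\in L(e)}$. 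A random $(C\log n,2n-1)$-list assignment makes $\OO_L$ a random subset of $\OO$ whose restriction to the slots over any fixed edge is a uniformly random $(C\log n)$-subset of $[2n-1]$, independently across edges. A routine coupling --- generate a product-random $\OO'\sub\OO$ with retention probability $p=(1-o(1))\tfrac{C\log n}{2n}$ and, independently over edges, extend each (whp tiny) sublist to a uniformly random list of size $C\log n$ --- shows that whp $\OO_L\sups\OO'$. Since ``$\OO_L$ contains some $T_\sS$'' is an increasing event, it suffices to show that a $p$-random subset of $\OO$ whp contains some $T_\sS$. The minimal members of this up-set are exactly the sets $T_\sS$, of size $\TT(n^2)$, so by \cite{FKNP} it is enough to exhibit a $q$-spread probability distribution on $1$-factorisations of $K_{2n}$ (via $\sS\mapsto T_\sS$) with $q=O(1/n)$; the threshold is then $O(q\log n)=O(\log n/n)\le p$ once $C$ is large.

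\emph{The spread measure.} To build such a distribution I would run the iterative-absorption scheme of \cite{KKKMO}. Reserve a small, carefully structured absorber inside $K_{2n}$, and colour the remaining edges in $O(\log n)$ rounds by a R\"odl-nibble-type process, each round selecting an almost-perfect matching of the current uncoloured graph in an $O(1/n)$-spread way while keeping it regular and quasirandom. The absorber is chosen so that, after these rounds, finishing the $1$-factorisation amounts to producing an optimal edge-colouring of an $r'$-regular, $(\dD',r'/n')$-uniform subgraph $H'$ of some $K_{n',n'}$ with $r'>\dD' n'$ and $n'=\TT(n)$ --- exactly the setting of \cref{thm:main}, which then supplies a $K'/n'$-spread distribution on such colourings. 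Composing the per-round increments with this final distribution via the spread-composition lemmas of \cite{KKKMO} (which cost only a constant factor overall) yields the desired $O(1/n)$-spread distribution on $1$-factorisations of $K_{2n}$.

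\emph{Main point.} I expect no genuinely new difficulty: \cite{KKKMO} already perform a reduction of exactly this shape, with only a constant-factor loss, and it applies to $1$-factorisations of $K_{2n}$ just as to Steiner triple systems, so if their reduction is phrased to include this case then \cref{cor:lists} is immediate from \cref{thm:main}. The one respect in which the present argument improves on \cite{KKKMO} --- and the reason one obtains the \emph{optimal} power of $\log n$ --- is that \cref{thm:main} provides a $K'/n'$-spread distribution on the bipartite leftover $H'$, whereas \cite{KKKMO} could only produce a $\tfrac{K'\log n'}{n'}$-spread one. The translation between the random-list model and a product measure is the only genuinely model-specific step and is routine. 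So the real content of \cref{cor:lists} lies entirely in \cref{thm:main}; what remains is bookkeeping, the most delicate piece being to confirm that spreadness survives the $O(\log n)$ absorption rounds with only a constant-factor loss.
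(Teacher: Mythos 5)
Your proposal is correct and takes essentially the same route as the paper: the paper's entire ``proof'' of \cref{cor:lists} is the observation that the deduction of \cite[Theorem 1.7]{KKKMO} --- the coupling of the random list model with a product measure, the Frankston--Kahn--Narayanan--Park theorem, and the iterative-absorption reduction to a spread measure on optimal edge-colourings of a regular nearly-complete bipartite graph --- applies unchanged for $p \ge \tfrac{C\log n}{n}$ once \cref{thm:main} supplies a $K/n$-spread measure in place of the $O(\log n/n)$-spread one. You re-sketch the internals of that reduction rather than citing it as a black box, but the substance (and the division of labour between \cref{thm:main} and the \cite{KKKMO} machinery) is the same as in the paper.
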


\begin{cor} \label{cor:threshold}
For any $\dD>0$ there is $C>0$ such that 
if $H \sub K_{n,n}$ is $r$-regular
and $(\dD,r/n)$-uniform with $r>\dD n$
and $L$ is a random $(C\log n,r)$-list assignment for $E(H)$
then whp there is an $L$-edge-colouring of $H$.
\end{cor}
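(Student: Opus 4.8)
The plan is to derive this directly from \cref{thm:main} and the fractional Kahn--Kalai theorem of Frankston, Kahn, Narayanan and Park \cite{FKNP} (equivalently its sharpening by Park and Pham \cite{PP}), packaged as in the reduction of \cite{KKKMO}; all of the substance lies in \cref{thm:main}, and the rest is bookkeeping. First I would fix the ground set $\OO := E(H) \times [r]$, where $[r]$ is the colour palette. Since $H \sub K_{n,n}$ is $r$-regular we have $|E(H)| = rn$, and K\"onig's edge-colouring theorem gives $\chi'(H) = r$, so every optimal edge-colouring $\phi$ of $H$ is a proper colouring $E(H) \ra [r]$ using all $r$ colours; I identify $\phi$ with the set $S_\phi := \{(e,\phi(e)) : e \in E(H)\} \sub \OO$, so $|S_\phi| = |E(H)| = rn$ while $|\OO| = r^2 n \le n^3$. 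With this encoding an $L$-edge-colouring of $H$ is exactly an optimal $\phi$ with $S_\phi \sub \{(e,c) : c \in L(e)\}$.

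Then I would argue in three steps. \emph{(i)} Apply \cref{thm:main}, whose hypotheses are precisely those assumed here, to obtain a $(K_0/n)$-spread probability distribution $\mu$ on optimal edge-colourings of $H$, where $K_0 = K_0(\dD)$; viewed as a measure on the family $\mc{G} := \{S_\phi : \phi \text{ optimal}\}$, this says $\mu(\{S_\phi : S_\phi \sups T\}) \le (K_0/n)^{|T|}$ for every $T \sub \OO$, which is exactly the spreadness input needed for \cite{FKNP, PP}. \emph{(ii)} Replace the fixed-size random list assignment by a binomial random subset of $\OO$: put $s := C\log n$ and $p := s/(2r)$ (so $s < r$ and $p < 1$ for $n$ large), and for each $e \in E(H)$ let $L_0(e) \sub [r]$ be an independent $p$-random subset. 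Couple $L$ with $(L_0(e))_e$ so that $L(e) \sups L_0(e)$ whenever $|L_0(e)| \le s$ --- by extending $L_0(e)$ to a uniformly random $s$-superset, and letting $L(e)$ be an independent uniformly random $s$-set on the rare event $|L_0(e)| > s$; by symmetry $L$ is then a genuine $(s,r)$-list assignment. Since $\mb{E}|L_0(e)| = s/2 = \tfrac{C}{2}\log n$, a Chernoff bound and a union bound over the at most $n^2$ edges give $L(e) \sups L_0(e)$ for all $e$ with probability $1 - o(1)$, provided $C$ is a large enough absolute constant; and $W := \{(e,c) : c \in L_0(e)\}$ is then a genuine $p$-random subset of $\OO$. \emph{(iii)} Apply \cite{FKNP} (or \cite{PP}) in the quantitative form used in \cite{KKKMO}: there is a universal $K_1$ such that, for a $q$-spread family of subsets of $\OO$ of size at most $\ell$ and for $p \ge K_1 q \log \ell$, a $p$-random subset of $\OO$ contains a member of the family except with probability that tends to $0$ --- indeed is at most a fixed negative power of $n$ --- provided the constant is chosen large enough. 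I would use this with $q = K_0/n$ and $\ell = |E(H)| = rn$: since $\ell$ is polynomial in $n$ we have $\log\ell = \TT(\log n)$, so $p = s/(2r) = \tfrac{C}{2}(\log n)/r \ge K_1 (K_0/n) \log(rn)$ holds (with $o(1)$ failure probability) once $C = C(\dD)$ is large enough. Hence whp $W \sups S_\phi$ for some optimal $\phi$, i.e.\ $\phi(e) \in L_0(e) \sub L(e)$ for every $e$, so $\phi$ is an $L$-edge-colouring of $H$. Taking a union bound over the two failure events of probability $o(1)$ completes the proof.

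I do not expect a genuine obstacle: the real content is \cref{thm:main}, and the deduction above is the now-standard route from a sufficiently spread measure to a threshold result. Two points merely need care. The first is the coupling in step (ii) between a fixed-size random list assignment and a binomial random subset; this is routine and costs only a constant factor in $C$, absorbed into $C = C(\dD)$. The second is that in step (iii) one must invoke \cite{FKNP}/\cite{PP} in its quantitative (high-probability) form rather than the bare statement that a $p$-random set lies in a given up-set with probability $\tfrac12$ --- but this is harmless exactly because $|\OO|$ and $|E(H)|$ are only polynomial in $n$, so the logarithmic factor entering the threshold is $\TT(\log n)$ and the list size stays $\TT(\log n)$ rather than picking up an extra factor of $\log n$. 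This is precisely the step where the \emph{optimal} $(K_0/n)$-spreadness supplied by \cref{thm:main} --- in place of the $O(n^{-1}\log n)$-spread measure constructed in \cite{KKKMO} --- yields the optimal list size $\TT(\log n)$ instead of $\TT(\log^2 n)$. Finally, \cref{cor:sts,cor:latin,cor:lists} would follow from \cref{thm:main} by the reductions recorded in \cite{KKKMO}.
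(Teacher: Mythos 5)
Your proposal is correct and follows the same route as the paper, which deduces \cref{cor:threshold} directly from \cref{thm:main} together with the Frankston--Kahn--Narayanan--Park theorem (as in the reduction of \cite{KKKMO}); you have simply written out the standard details (the encoding of colourings as subsets of $E(H)\times[r]$, the binomial-versus-fixed-size list coupling, and the choice of constants) that the paper leaves implicit as "immediate".
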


The deductions of the these corollaries from \cref{thm:main}
are the same as the deductions in \cite{KKKMO} of the corresponding results
with an extra $\log n$ factor. 
Indeed, \cref{cor:threshold} (and so \cref{cor:latin})
is immediate from \cref{thm:main}
combined with the Frankston-Kahn-Narayanan-Park Theorem
(Talagrand's fractional Kahn-Kalai Conjecture).
Furthermore, the deductions of the results corresponding
to \cref{cor:sts} and \cref{cor:lists},
which are given by \cite[Theorems 1.6 and 1.7]{KKKMO},
are applicable for $p \ge \tfrac{C\log n}{n}$.

The main technical contribution of our paper lies in showing
that a carefully constructed random greedy process constructs 
a spread measure on approximately regular factorisations.
Specifically, given $H$ as in \cref{thm:main} we need to produce random
factorisations $(H_1,\dots,H_m)$ where each $H_c$ has all vertex degrees
$(1 \pm \dD)d$ for some $d=O(1)$ bounded independently of $n$.
Constructing such factorisations randomly is a delicate task
due to the lack of concentration of vertex degrees in random graphs
of density $O(1/n)$, so there are many subtleties in showing that
there is some `goldilocks zone' for constrained random greedy processes
that are sufficiently constrained to produce a factorisation
with the required regularity properties,
but not overly constrained so as impact spreadness.
 
\subsection{Definitions}

Let $H$ be a graph. A \emph{factorisation} $(H_1,\dots,H_m)$ of $H$
is a list of spanning subgraphs (each $V(H_c)=V(H)$)
such that $(E(H_1),\dots,E(H_m))$ is a partition of $E(H)$.
We will also think of a factorisation as a colouring of $E(H)$ 
where each edge in $H_c$ is assigned colour $c \in [m]$.
If every $H_c$ has some property $P$ we call it a \emph{$P$ factorisation}.
We will be particularly concerned with regular factorisations,
in which each $H_c$ is a spanning regular subgraph.
If all pieces are $1$-regular (matchings) 
we also speak of a \emph{$1$-factorisation} or \emph{optimal edge-colouring}.

We note that a $1$-factorisation can only exist if $H$ itself is regular.
Furthermore, if $H$ is bipartite and regular then it is well-known 
(and easy to see by Hall's Theorem) that $H$ has a $1$-factorisation.

A probability distribution on factorisations $(H_1,\dots,H_m)$ of $H$ 
is \emph{$q$-spread} if for any $S_1,\dots,S_m \sub E(H)$ 
we have $\mb{P}(\bigcap_c \{S_c \sub H_c\}) \le q^{\sum_c |S_c|}$.

If $H$ is bipartite then
any regular factorisation of $H$ can be refined into a $1$-factorisation,
so from any $q$-spread distribution on regular factorisations
we can obtain a $q$-spread distribution on $1$-factorisations.

For $D \sub \mb{N}$ we say that a graph $H$ is \emph{$D$-regular}
if all vertex degrees $|H(v)|$ are in $D$.
We will often take $D$ of the form $(1 \pm \dD)d := [d-\dD d,d+\dD d]$.

Let $H \sub K_{n,n}$ be a bipartite graph 
on $(U_1,U_2)$ with $|U_1|=|U_2|=n$.
We say that $H$ is $(\dD,p)$-uniform\footnote{
Here `regular' is more common in the literature,
but this word has several other uses in this paper,
so for clarity we use alternative terminology
(which also has some precedent in the literature). 
}
if for every $V_i \sub U_i$ with $|V_i| \ge \dD n$ for $i=1,2$ 
we have $|H[V_1,V_2]| = (1 \pm \dD)p|V_1||V_2|$.

We defer to the next section some further definitions 
of sparsity, density and quasirandomness
which are needed for our most general result Theorem \ref{thm:main+}.

We write $0 < a \ll b$ to indicate that the following statement
holds for any $b>0$ and $a \in (0,a_0(b))$ sufficiently small.
Hierarchies with more parameters are defined analogously.

We suppress notation for rounding to integers when this is obviously harmless.

\section{Preliminaries}

\subsection{Concentration of probability}

We will require the following well-known Chernoff bound, 
see e.g.~\cite[Theorems~2.1~and~2.10]{JLR}. 
\begin{lem}\label{lem:chernoff}
Let $\dD>0$ and $X$ be a binomial or hypergeometric random variable. Then
\[\Pr[X\le(1-\delta)\mb{E}X]\le\exp(-\delta^2\mb{E}X/2),
\qquad\Pr[X\ge (1+\delta)\mb{E}X]\le\exp(-\delta^2\mb{E}X/(2+\delta)).\]
\end{lem}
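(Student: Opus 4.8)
The plan is to run the classical Cram\'er--Chernoff exponential moment argument, handling the binomial case first and then reducing the hypergeometric case to it. For $X \sim \on{Bin}(N,q)$, write $X = X_1 + \dots + X_N$ with the $X_i$ independent Bernoulli$(q)$ and put $\mu := \mb{E}X = Nq$. For every $t \in \mb{R}$, independence together with $1 + x \le e^x$ yields the moment generating function bound
\[
\mb{E}\,e^{tX} = \prod_{i=1}^N \mb{E}\,e^{tX_i} = \brac{1 + q(e^t - 1)}^N \le \exp\!\brac{\mu(e^t - 1)}.
\]
For the upper tail, apply Markov's inequality to $e^{tX}$ with the optimal choice $t = \log(1+\delta) > 0$:
\[
\Pr[X \ge (1+\delta)\mb{E}X] \le e^{-t(1+\delta)\mu}\,\mb{E}\,e^{tX} \le \exp\!\brac{-\mu\brac{(1+\delta)\log(1+\delta) - \delta}},
\]
and it then remains only to check the elementary bound $(1+\delta)\log(1+\delta) - \delta \ge \tfrac{\delta^2}{2+\delta}$ for $\delta \ge 0$, which after differentiating twice reduces to the polynomial inequality $4\delta + 6\delta^2 + \delta^3 \ge 0$. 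For the lower tail, take instead $t = \log(1-\delta) < 0$ to get $\Pr[X \le (1-\delta)\mb{E}X] \le \exp\!\brac{-\mu\brac{\delta + (1-\delta)\log(1-\delta)}}$, and conclude using $\delta + (1-\delta)\log(1-\delta) = \sum_{k \ge 2}\tfrac{\delta^k}{k(k-1)} \ge \tfrac{\delta^2}{2}$ for $\delta \in (0,1)$; the case $\delta \ge 1$ is immediate since then $\Pr[X \le (1-\delta)\mb{E}X] \le \Pr[X = 0] \le e^{-\mu}$.

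For $X$ hypergeometric --- say the number of marked elements in a size-$N$ sample drawn without replacement, with mean $\mu$ --- I would derive the very same moment generating function bound $\mb{E}\,e^{tX} \le \exp(\mu(e^t - 1))$ and then reuse the binomial computation verbatim. One way to get the bound is Hoeffding's comparison theorem: sampling without replacement is dominated in the convex order by sampling with replacement, so $\mb{E}\,\phi(X) \le \mb{E}\,\phi(Y)$ for every convex $\phi$, where $Y$ is the binomial variable with the same mean; apply this to $\phi(x) = e^{tx}$. Alternatively, the indicator variables of the without-replacement sample are negatively associated, whence $\mb{E}\prod_i e^{tX_i} \le \prod_i \mb{E}\,e^{tX_i}$, valid for $t$ of either sign because $x \mapsto e^{tx}$ is monotone, and the right side is again at most $\exp(\mu(e^t-1))$ since each $X_i$ is marginally Bernoulli with success probability $\mu/N$.

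I do not expect any genuine obstacle here: the argument is entirely standard. The only mildly non-routine ingredient is the convex-order (equivalently, negative-association) comparison used in the hypergeometric case, and that is itself classical; the two scalar inequalities at the end are one-variable calculus, and had one not wished to optimise $t$ exactly, plugging in a suboptimal value would still give bounds of the stated form up to constants.
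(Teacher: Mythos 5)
Your proof is correct and is essentially the argument the paper relies on: the paper does not prove this lemma but cites it from Janson--\L uczak--Ruci\'nski (Theorems~2.1 and~2.10), where exactly this Cram\'er--Chernoff computation is carried out for the binomial case and the hypergeometric case is handled by Hoeffding's convex-order comparison with the binomial of the same mean, just as you do. (One pedantic point: for $\delta>1$ your bound $\Pr[X=0]\le e^{-\mu}$ does not beat $e^{-\delta^2\mu/2}$ once $\delta>\sqrt{2}$, but there the event $X\le(1-\delta)\mb{E}X$ forces $X<0$ and so has probability zero, so the claim is trivial.)
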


We often apply the Chernoff bound to a sum $X$ of random variables 
as in the following lemma that 
can be stochastically dominated by a binomial
via a straightforward coupling argument.

\begin{lem}\label{lem:bernoulli-domination}
Suppose the random variables $X_1,\dots,X_n$
and $Y_1,\dots,Y_n$ are $\{0,1\}$-valued,
with $Y_1,\dots,Y_n$ independent
and each $\mb{E}[X_i|X_1,\ldots,X_{i-1}]\le \mb{E}Y_i$.
Let $X = \sum_i X_i$ and $Y = \sum_i Y_i$.
Then for any $t\ge 0$ we have 
$\mb{P}(X \ge t) \le \mb{P}(Y \ge t)$.
\end{lem}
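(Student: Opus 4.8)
The plan is a one-coordinate-at-a-time coupling — the ``quantile'' (monotone) coupling of Bernoulli variables — adapted to accommodate the dependence among the $X_i$.

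First I would turn the hypothesis into a pointwise inequality between functions of the past. Since each $X_i$ is $\{0,1\}$-valued, the tuple $(X_1,\dots,X_{i-1})$ takes only finitely many values, so $\mb{E}[X_i\mid X_1,\dots,X_{i-1}]$ is realised by an honest function $p_i\colon\{0,1\}^{i-1}\to[0,1]$ of the past (defined on configurations of positive probability; on the remaining configurations I would simply set $p_i$ equal to $q_i:=\mb{E}Y_i$). The hypothesis then reads $p_i(\cdot)\le q_i$ on every configuration.

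Next I would build the coupling on a product space. Take $U_1,\dots,U_n$ i.i.d.\ uniform on $[0,1]$ and define, recursively in $i$, $\tilde X_i:=\mathbf{1}[U_i\le p_i(\tilde X_1,\dots,\tilde X_{i-1})]$ and $\tilde Y_i:=\mathbf{1}[U_i\le q_i]$. Two distributional identities then need checking, both by a short induction using that $U_i$ is independent of $U_1,\dots,U_{i-1}$ and hence of $\tilde X_1,\dots,\tilde X_{i-1}$: (i) $\mb{P}(\tilde X_i=1\mid \tilde X_1,\dots,\tilde X_{i-1})=p_i(\tilde X_1,\dots,\tilde X_{i-1})$, so by the chain rule $(\tilde X_1,\dots,\tilde X_n)$ has the same law as $(X_1,\dots,X_n)$; and (ii) the $\tilde Y_i$ are independent with $\mb{P}(\tilde Y_i=1)=q_i$, so $(\tilde Y_1,\dots,\tilde Y_n)$ has the same law as $(Y_1,\dots,Y_n)$. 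Crucially, $p_i\le q_i$ gives $\tilde X_i\le\tilde Y_i$ everywhere.

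Finally I would read off the conclusion: $\tilde X:=\sum_i\tilde X_i\le\sum_i\tilde Y_i=:\tilde Y$ pointwise, so $\{\tilde X\ge t\}\subseteq\{\tilde Y\ge t\}$ and hence $\mb{P}(X\ge t)=\mb{P}(\tilde X\ge t)\le\mb{P}(\tilde Y\ge t)=\mb{P}(Y\ge t)$. There is essentially no obstacle; the only point that needs a moment's care is the bookkeeping in the first step — treating $\mb{E}[X_i\mid X_1,\dots,X_{i-1}]$ as a genuine function of $i-1$ bits so that the recursion defining $\tilde X_i$ makes sense, and arranging the domination $p_i\le q_i$ to hold pointwise (including on null configurations) rather than merely almost surely. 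Everything else is routine.
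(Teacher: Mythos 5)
Your proof is correct, and it is exactly the ``straightforward coupling argument'' the paper alludes to without spelling out: the paper gives no detailed proof of this lemma, only citing such a coupling. The quantile coupling with shared uniforms, including the care about realising the conditional expectations as functions $p_i\le q_i$ on all (including null) configurations, is the intended and standard route.
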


We also require Azuma's inequality for supermartingales,
see e.g~\cite[Theorem~2.25]{JLR}.

\begin{lem}\label{lem:azuma}
Let $(X_0,\dots,X_n)$ be a supermartingale sequence
satisfying $|X_k-X_{k-1}| \le c_k$ for some constants $c_k$.
Then $\mb{P}(X_n \ge X_0 + t) \le e^{-t^2/2V}$,
where $V = \sum_k c_k^2$ is the \emph{variance proxy}.
\end{lem}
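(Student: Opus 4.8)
The plan is to run the standard exponential-moment (Chernoff-type) argument, now in the martingale setting. Write $D_k = X_k - X_{k-1}$ and let $\mc{F}_k$ denote the natural filtration, so that $\mb{E}[D_k \mid \mc{F}_{k-1}] \le 0$ by the supermartingale hypothesis and $|D_k| \le c_k$ almost surely. Fix $\lambda > 0$. The crux is a one-step sub-Gaussian estimate: whenever $Z$ satisfies $\mb{E}[Z \mid \mc{F}_{k-1}] \le 0$ and $|Z| \le c$ almost surely, then $\mb{E}[e^{\lambda Z} \mid \mc{F}_{k-1}] \le e^{\lambda^2 c^2/2}$. To see this, convexity of $x \mapsto e^{\lambda x}$ on $[-c,c]$ gives the chord bound $e^{\lambda Z} \le \tfrac{c-Z}{2c}e^{-\lambda c} + \tfrac{c+Z}{2c}e^{\lambda c}$; taking conditional expectations yields $\mb{E}[e^{\lambda Z}\mid\mc{F}_{k-1}] \le \cosh(\lambda c) + c^{-1}\mb{E}[Z\mid\mc{F}_{k-1}]\sinh(\lambda c) \le \cosh(\lambda c) \le e^{\lambda^2 c^2/2}$, where the middle inequality uses $\sinh(\lambda c) > 0$ together with $\mb{E}[Z\mid\mc{F}_{k-1}] \le 0$, and the last is the elementary bound $\cosh x \le e^{x^2/2}$ (compare Taylor coefficients).

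Next I iterate over $k = n, n-1, \dots, 1$ via the tower property. Since $X_{k-1} - X_0$ is $\mc{F}_{k-1}$-measurable,
\[
\mb{E}\!\brak{e^{\lambda(X_k-X_0)}\mid\mc{F}_{k-1}} = e^{\lambda(X_{k-1}-X_0)}\,\mb{E}\!\brak{e^{\lambda D_k}\mid\mc{F}_{k-1}} \le e^{\lambda^2 c_k^2/2}\,e^{\lambda(X_{k-1}-X_0)}.
\]
Taking expectations and inducting downward gives $\mb{E}\!\brak{e^{\lambda(X_n-X_0)}} \le e^{\lambda^2 V/2}$ with $V = \sum_k c_k^2$. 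Finally, Markov's inequality gives, for every $\lambda > 0$,
\[
\mb{P}(X_n \ge X_0 + t) = \mb{P}\!\brac{e^{\lambda(X_n-X_0)} \ge e^{\lambda t}} \le e^{-\lambda t}\,\mb{E}\!\brak{e^{\lambda(X_n-X_0)}} \le e^{-\lambda t + \lambda^2 V/2},
\]
and choosing $\lambda = t/V$ (legitimate when $V>0$; the case $V=0$ forces $X_n \le X_0$, so the bound is trivial) yields $\mb{P}(X_n \ge X_0 + t) \le e^{-t^2/(2V)}$, as desired.

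The only point requiring any care is the one-step estimate, and within it the need to accommodate $\mb{E}[D_k \mid \mc{F}_{k-1}] \le 0$ rather than exactly zero; this is precisely why one records that the chord bound is monotone increasing in the conditional mean, so that the supermartingale inequality can only help. The downward induction and the Markov/optimisation step are entirely routine. (As the statement is classical one could of course simply cite \cite[Theorem~2.25]{JLR}, but the argument above is short and self-contained.)
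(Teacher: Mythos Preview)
Your argument is correct and is the standard exponential-moment proof of Azuma's inequality. The paper does not actually prove this lemma; it merely cites \cite[Theorem~2.25]{JLR}, so there is nothing to compare against beyond noting that you have supplied the self-contained argument that the reference contains.
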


\subsection{Spreadness}

A random subgraph $H$ of a graph $G$ is \emph{$q$-spread} 
if for any $S \sub E(G)$ we have 
$\mb{P}(S \sub H) \le q^{|S|}$.

We will repeatedly use the simple observation
that if $H$ is $q$-spread and
$E$ is an event for $H$ with $\mb{P}(E) \ge 1/2$
then $H \vert E$ is $2q$-spread.
Indeed, for any $S \sub E(G)$,
if $S=\es$ we have $\mb{P}(S \sub H \mid E)=1 = (2q)^{|S|}$,
or if $S \ne \es$ we have 
\[ \mb{P}(S \sub H \mid E) 
= \mb{P}(\{S \sub H\} \cap E)/\mb{P}(E)  
\le 2\mb{P}(S \sub H) \le 2q^{|S|} \le (2q)^{|S|}.\]

We also consider a more general spreadness notion
for random subgraphs of random subgraphs.
A random subgraph $H'$ of $H$ is \emph{conditionally $q'$-spread} 
if for any $S \sub E(G)$ we have 
$\mb{P}(S \sub H' \mid S \sub H) \le (q')^{|S|}$.

For example, if $H$ is $q$-spread then
we can also think of $H$ as being conditionally $q$-spread
in the `random' subgraph $G$ of $G$ in which 
every edge appears (with probability $1$).

\begin{lem}
Suppose $H' \sub H$ is conditionally $q'$-spread
and $H'' \sub H'$ is conditionally $q''$-spread.
Then $H'' \sub H$ is conditionally $q'q''$-spread.
\end{lem}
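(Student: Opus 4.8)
The plan is to unwind the definitions and chain the two conditional spreadness bounds, using the tower property of conditional expectation. Fix $S \sub E(G)$; if $S = \es$ the claimed bound $\mb{P}(S \sub H'' \mid S \sub H) \le 1 = (q'q'')^{0}$ is trivial, so assume $S \ne \es$. The key observation is the containment of events $\{S \sub H''\} \sub \{S \sub H'\} \sub \{S \sub H\}$, which holds because $H'' \sub H' \sub H$ as (random) subgraphs. Consequently $\{S \sub H''\} = \{S \sub H''\} \cap \{S \sub H'\} \cap \{S \sub H\}$, and we may factor the probability as a product of conditional probabilities along this nested chain.

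Concretely, I would write
\[
\mb{P}(S \sub H'' \mid S \sub H)
= \mb{P}(S \sub H'' \mid S \sub H' \text{ and } S \sub H) \cdot \mb{P}(S \sub H' \mid S \sub H),
\]
which is valid since $\{S \sub H''\} \sub \{S \sub H'\}$ so that conditioning additionally on $\{S \sub H'\}$ in the first factor changes nothing, and all conditioning events have positive probability (or else the left side is vacuously zero). The second factor is at most $(q'')^{|S|}$... wait — I should be careful about \emph{which} conditioning event appears. The hypothesis ``$H''\sub H'$ conditionally $q''$-spread'' gives $\mb{P}(S \sub H'' \mid S \sub H') \le (q'')^{|S|}$; here I have the extra conditioning on $\{S \sub H\}$, but since $\{S \sub H'\} \sub \{S \sub H\}$ the events $\{S\sub H'\}$ and $\{S\sub H' \text{ and } S \sub H\}$ coincide, so the first factor is exactly $\mb{P}(S \sub H'' \mid S \sub H') \le (q'')^{|S|}$. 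The second factor is $\mb{P}(S \sub H' \mid S \sub H) \le (q')^{|S|}$ directly from the hypothesis on $H' \sub H$. Multiplying gives $\mb{P}(S \sub H'' \mid S \sub H) \le (q')^{|S|}(q'')^{|S|} = (q'q'')^{|S|}$, as required.

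There is essentially no substantive obstacle here; the only thing to get right is the bookkeeping of nested conditioning events, namely repeatedly using that $\{S\sub H''\}\sub\{S\sub H'\}\sub\{S\sub H\}$ so that adding or removing the ``weaker'' containment events from a conditioning sigma-algebra is harmless whenever the ``stronger'' one is already present. One should also handle the degenerate case $\mb{P}(S \sub H) = 0$ (then the conditional probability is undefined, or one adopts the convention that the bound holds vacuously), but this is the same convention implicitly used in the definition of conditional spreadness in the excerpt, so no new care is needed. I expect the whole proof to be three or four lines.
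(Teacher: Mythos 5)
Your proof is correct and is essentially the paper's own argument: the paper's one-line proof is exactly the chain-rule factorisation $\mb{P}(S \sub H'' \mid S \sub H) = \mb{P}(S \sub H'' \mid S \sub H')\,\mb{P}(S \sub H' \mid S \sub H)$, which implicitly uses the nesting $\{S \sub H''\} \sub \{S \sub H'\} \sub \{S \sub H\}$ that you spell out explicitly. Your extra bookkeeping (degenerate cases, coinciding conditioning events) is fine but adds nothing beyond what the paper leaves implicit.
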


\begin{proof}
For any $S \sub E(G)$ we have
$\mb{P}(S \sub H'' \mid S \sub H)
= \mb{P}(S \sub H'' \mid S \sub H') \mb{P}(S \sub H' \mid S \sub H)
\le (q'')^{|S|} (q')^{|S|}$.
\end{proof}

Recall that
a probability distribution on factorisations $(H_1,\dots,H_m)$ of $H$ 
is \emph{$q$-spread} if for any $S_1,\dots,S_m \sub E(H)$ 
we have $\mb{P}(\bigcap_c \{S_c \sub H_c\}) \le q^{\sum_c |S_c|}$.

A probability distribution on factorisations $(H_1,\dots,H_m)$ 
of a random subgraph $H$ of $G$ is \emph{conditionally $q$-spread}
if for any $S_1,\dots,S_m \sub E(G)$ we have 
\[ \mb{P}(\bigcap_c \{S_c \sub H_c\} \mid \bigcap_c \{S_c \sub H\} ) \le q^{\sum_c |S_c|}. \]

\begin{lem} \label{lem:iterspread}
If $(H_1,\dots,H_m)$ is a conditionally $q$-spread factorisation of $H$
and $(H^i_1,\dots,H^i_{m_i})$ conditionally $q'$-spread factorisations of each $H_i$
that are conditionally independent given $(H_1,\dots,H_m)$
then their combination is a conditionally $qq'$-spread factorisation of $H$.
\end{lem}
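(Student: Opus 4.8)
The plan is to unfold the probability in the definition of conditional spreadness for the combined factorisation and factor it through the two given layers. Fix subsets $S^i_j \sub E(G)$ for $i \in [m]$ and $j \in [m_i]$; these are the target sets we wish to force into the corresponding pieces $H^i_j$ of the combined factorisation. For each $i$, write $T_i = \bigcup_j S^i_j \sub E(G)$ for the union of the targets inside $H_i$. The key observation is that the event $\{S^i_j \sub H^i_j \text{ for all } j\}$ implies $\{S^i_j \sub H_i \text{ for all } j\}$ (since each $H^i_j \sub H_i$), which is exactly the event $\{T_i \sub H_i\}$, and this in turn implies $\{T_i \sub H\}$. So when we condition on $\bigcap_{i,j}\{S^i_j \sub H\}$ we may as well first expose $(H_1,\dots,H_m)$.

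Concretely, I would write, abbreviating $\mc{S} = \bigcap_{i,j}\{S^i_j \sub H^i_j\}$ and $\mc{H} = \bigcap_{i,j}\{S^i_j \sub H\}$ (equivalently $\bigcap_i \{T_i \sub H\}$) and $\mc{F} = \bigcap_i \{T_i \sub H_i\}$,
\[
\mb{P}(\mc{S} \mid \mc{H}) = \mb{P}(\mc{S} \mid \mc{F})\, \mb{P}(\mc{F} \mid \mc{H}),
\]
which is valid because $\mc{S} \sub \mc{F}$ (so conditioning further on $\mc{F}$ changes nothing) and $\mc{F} \sub \mc{H}$. The second factor is at most $q^{\sum_i |T_i|}$ by conditional $q$-spreadness of $(H_1,\dots,H_m)$, noting $\mc{H}$ is precisely the conditioning event $\bigcap_i\{T_i \sub H\}$ appearing in that definition. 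For the first factor, condition on the value of $(H_1,\dots,H_m)$: given this data the factorisations $(H^i_1,\dots,H^i_{m_i})$ of the distinct $H_i$ are independent, so $\mb{P}(\mc{S} \mid \mc{F}, (H_1,\dots,H_m)) = \prod_i \mb{P}(\bigcap_j\{S^i_j \sub H^i_j\} \mid T_i \sub H_i, H_i) \le \prod_i (q')^{\sum_j |S^i_j|} = (q')^{\sum_{i,j}|S^i_j|}$ using conditional $q'$-spreadness of each inner factorisation; averaging over $(H_1,\dots,H_m)$ restricted to $\mc{F}$ preserves the bound. Multiplying, and using $\sum_i |T_i| \le \sum_i \sum_j |S^i_j|$ (as $T_i = \bigcup_j S^i_j$), gives $\mb{P}(\mc{S}\mid\mc{H}) \le (qq')^{\sum_{i,j}|S^i_j|}$, as required.

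The only genuinely delicate point is the bookkeeping with the union $T_i = \bigcup_j S^i_j$: one must use that conditional $q$-spreadness of the outer factorisation is stated for arbitrary target sets, so applying it with $S_i := T_i$ is legitimate, and that $|T_i|$ can only be smaller than $\sum_j |S^i_j|$, so the loss goes the right way. I would also remark that the conditional independence hypothesis is used only to split the first factor as a product over $i$; without it one would at best get a bound involving $q'$ raised to $\max_i \sum_j |S^i_j|$ rather than the full sum. Everything else is a routine identity for conditional probabilities, valid because of the nested inclusions $\mc{S} \sub \mc{F} \sub \mc{H}$.
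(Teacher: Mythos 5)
Your proof is correct and takes essentially the same route as the paper: factor the conditional probability through the event $\bigcap_i\{S_i \sub H_i\}$ with $S_i=\bigcup_j S^i_j$, bound that factor by the outer conditional spreadness, and use conditional independence together with the inner conditional spreadness to bound the rest by $\prod_i (q')^{\sum_j |S^i_j|}$. If anything, you are slightly more explicit than the paper about carrying the conditioning on $\bigcap_{i,j}\{S^i_j \sub H\}$ and about averaging over the realisation of $(H_1,\dots,H_m)$; the bookkeeping point you flag ($|T_i|$ versus $\sum_j |S^i_j|$, where strict inequality forces the event in question to be empty) is implicit in the paper's proof as well.
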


\begin{proof}
We consider any $S^i_j \sub E(G)$ for $i \in [m]$, $j \in [m_i]$
and write $S_i = \bigcup_j S^i_j$. Then 
\begin{align*}
\mb{P}(\bigcap_{i,j} \{S^i_j \sub H^i_j\}) 
& = \mb{P}(\bigcap_i \{S_i \sub H_i\})
\mb{P}(\bigcap_{i,j} \{S^i_j \sub H^i_j\} \mid \bigcap_i \{S_i \sub H_i\}) \\
& \le q^{\sum_i |S_i|} \prod_i (q')^{\sum_j |S^i_j|}
= (qq')^{\sum_{i,j} |S^i_j|},
\end{align*}
where we used conditional independence then conditional spreadness.
\end{proof}

We conclude with a lemma  
of Pham, Sah, Sawhney and Simkin (see \cite[Lemma 4.1]{PSSS}) 
on spread measures for perfect matchings
in super-regular bipartite graphs.
Here we say that $H \sub K_{n,n}$ is $(\dD,p)$-super-regular
if $H$ is $(\dD,p)$-uniform and has minimum degree $\ge (1-\dD)pn$.

\begin{lem} \label{lem:spreadmatch} 
Let $\dD \ll 1/L \ll p$.
Suppose $G$ is a $(\dD,p)$-super-regular bipartite graph with parts of size $n$.
Then there is an $L/n$-spread distribution on perfect matchings of $G$.
\end{lem}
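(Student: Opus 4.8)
The plan is to take $\mu$ to be the uniform distribution on perfect matchings of $G$, which is nonempty since a $(\dD,p)$-super-regular bipartite graph satisfies Hall's condition. Write $A,B$ for the two parts and, for $W\sub A\cup B$, write $\mathrm{per}(W)$ for the number of perfect matchings of $G-W$. Fix $S\sub E(G)$; if $S$ is not a partial matching then $\mathbb{P}_\mu(S\sub M)=0$, so we may assume $S$ is a partial matching of size $k$ covering a $2k$-set $V(S)$, and then $\mathbb{P}_\mu(S\sub M)=\mathrm{per}(V(S))/\mathrm{per}(\es)$. So it suffices to prove $\mathrm{per}(V(S))\le(L/n)^k\,\mathrm{per}(\es)$ for every partial matching $S$ of size $k$, and I would do this in two overlapping ranges of $k$, using throughout that $1/L\ll p$, so that $Lp$ is as large an absolute constant as we wish.

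For $k\le\tfrac{1}{2}(1-\dD)pn$, I would delete the endpoints of the edges of $S$ one pair at a time. Writing $S=\{a_1b_1,\dots,a_kb_k\}$ and $G_j=G-\{a_1,b_1,\dots,a_{j-1},b_{j-1}\}$, the ratio factorises as $\mathrm{per}(V(S))/\mathrm{per}(\es)=\prod_{j=1}^k\mathbb{P}(a_jb_j\in M_j)$, where $M_j$ denotes a uniform perfect matching of $G_j$, so it is enough to show $\mathbb{P}(a_jb_j\in M_j)\le L/n$ for each $j$. Every $G_j$ has minimum degree at least $(1-\dD)pn-k\ge pn/3$ and inherits the $(\dD,p)$-uniformity of $G$ at scales $\Omega(pn)$ (a large subset of $V(G_j)$ is a large subset of $V(G)$). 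The crux is then an \emph{equidistribution} claim: in such a graph, a uniformly random perfect matching contains any fixed edge $ab$ with probability at most $C/(pn)$ for an absolute constant $C$ --- equivalently, no edge at $a$ is used much more often than the average $1/\deg(a)$ over the $\ge pn/3$ edges at $a$. I would prove this by a switching argument: given a perfect matching $M\ni ab$ of $G_j$, one reroutes $M$ along a short alternating walk leaving $a$; uniformity supplies $\Omega(pn)$ essentially independent choices of reroute, and the resulting map onto perfect matchings of $G_j$ avoiding $ab$ is $O(1)$-to-one. This gives $\mathbb{P}(a_jb_j\in M_j)\le C/(pn)\le L/n$, using $L\gg 1/p$, and multiplying over $j$ yields the claim.

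For $k\ge\tfrac{1}{2}(1-\dD)pn$ the remnant $G-V(S)$ need not be dense, so I would instead argue crudely with permanent bounds. The van der Waerden--Egorychev--Falikman theorem, applied after a Sinkhorn rescaling of the biadjacency matrix, gives $\mathrm{per}(\es)\ge(pn/e)^n e^{-O(\dD n)}$; Brégman's theorem applied to the $n-k$ rows of $G-V(S)$, each with at most $\min\{(1+\dD)pn,\,n-k\}$ ones, gives $\mathrm{per}(V(S))\le(C'pn/e)^{n-k}e^{o(n)}$ when $n-k\ge(1+\dD)pn$, and trivially $\mathrm{per}(V(S))\le(n-k)!\le n^{n-k}$ otherwise. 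Dividing, the desired bound reduces in each case to an estimate of the shape $(Lp)^{k}\ge(C'')^{n}$, which, since $k\ge\tfrac{1}{2}(1-\dD)pn$, holds once $\log(Lp)$ exceeds a quantity of order $1/p$ --- and this is exactly what $1/L\ll p$ provides. One finally checks that the two ranges overlap around $k\approx pn/2$.

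The step I expect to be the main obstacle is the equidistribution claim in the small-$k$ range: showing that the per-edge probabilities of a uniform perfect matching of a pseudorandom bipartite graph lie within a constant factor of their average. Crude permanent estimates carry $e^{\Theta(\dD n)}$-sized errors and are useless here, so one genuinely needs the local switching/rerouting argument, and arranging for it to yield the clean factor $\Theta(1/(pn))$ with no spurious logarithmic loss --- in particular in the intermediate subrange, where the available short alternating walks are scarcest --- is the delicate point.
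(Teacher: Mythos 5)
A point of orientation first: the paper does not prove this lemma at all --- it is imported verbatim from Pham, Sah, Sawhney and Simkin (\cite{PSSS}, Lemma 4.1) --- so your proposal can only be measured against the literature, not an internal argument. Your architecture (uniform perfect matching, $\mb{P}(S \sub M) = \mathrm{per}(G - V(S))/\mathrm{per}(G)$, crude permanent bounds for $k = \Omega(pn)$, peeling plus a per-edge bound for small $k$) is the right shape, and the large-$k$ range is essentially sound: Br\'egman together with an Egorychev--Falikman lower bound of the form $((1-O(\dD))pn/e)^n$ does give $(L/n)^k$ once $k \ge pn/3$ and $\log L \gg 1/p$, although the lower bound is more safely obtained by first extracting an almost-regular spanning subgraph than by invoking Sinkhorn scaling, whose scaling factors you would otherwise still have to control.

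The genuine gap is exactly where you flagged it, and as stated your mechanism does not work. You claim that each matching $M \ni ab$ admits $\Omega(pn)$ reroutes with an $O(1)$-to-one inverse; that is what a single $4$-cycle switch $\{ab, a'b'\} \to \{ab', a'b\}$ would give, but to perform it you need many $b' \in N(a)$ whose $M$-partner lies in $N(b)$, i.e.\ control of the intersection of two particular sets of size about $pn$ on the same side --- and $(\dD,p)$-uniformity says nothing about individual vertex pairs, so for $p < 1/2$ these sets can be disjoint and your switch count can be zero. The repair is a length-$6$ switching: remove $ab$, $a'b'$, $a''b''$ where $b' \in N(a)$, $a' = M(b')$, $a'' \in N(b)$, $b'' = M(a'')$, and add $ab'$, $a'b''$, $a''b$. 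Now the two free choices correspond to a pair in $M(N(a)) \times M(N(b))$, two sets of size $\ge \dD n$, so uniformity gives $\ge (1-\dD)p \cdot \Omega((pn)^2)$ valid forward switches, while the reverse map has multiplicity at most $(1+\dD)pn$ (the choice of the middle matching edge), yielding $\mb{P}(ab \in M) = O(1/(p^2 n)) \le L/n$, which suffices since $L$ may be taken large in terms of $p$; this argument also survives the deletion of up to $(1-\dD)pn/2$ vertices per side, so your telescoping then goes through. But as written, the crux of your small-$k$ case is asserted with counts that uniformity cannot supply, so the proof is incomplete at its central step (albeit fixably so).
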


\subsection{Sparsity}

Suppose that $H$ is a graph on $n$ vertices.
We say that $H$ is \emph{$(\aA,\bB)$-sparse}
if for any $V \sub V(H)$ with $|V| \le \aA n$
we have $|H[V]| \le |V| \bB n$.

For any $V \sub V(H)$ 
we define a \emph{degeneracy order} $<$ on $V$
by starting with $H[V]$, repeatedly deleting 
any vertex of minimum degree in the remaining graph
until none remain, then defining $u<v$
whenever $v$ was deleted before $u$
(so the first deleted is last in the order).

Note that if $H$ is $(\aA,\bB)$-sparse then 
for any $V \sub V(H)$ with $|V| \le \aA n$
there is some $v \in V$ with $\le 2\bB n$ neighbours in $V$.
Thus for any degeneracy order $<$ on $V$,
any $v \in V$ has $\le 2\bB n$ neighbours $u \in V$ with $u<v$.

\begin{lem} \label{lem:sparse}
Let $C>1$, $\aA \in (0,1/4)$, $\gG > C(20\aA)^{1/5}$ and $r>5/\gG$.
Suppose $H$ is sampled from a $Cr/n$-spread distribution
on subgraphs of $K_{n,n}$. Then $H$ is $(\aA,\gG r/n)$-sparse 
with failure probability $< n^{-14}$, say.
\end{lem}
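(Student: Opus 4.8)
The plan is to prove this by a first-moment (union-bound) argument: enumerate the possible witnesses to the failure of $(\aA,\gG r/n)$-sparsity and bound the probability of each using the $Cr/n$-spreadness of $H$, which gives $\mb{P}(S\sub H)\le(Cr/n)^{|S|}$ for any fixed edge set $S$.

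First I would set up the witnesses. If $H$ is not $(\aA,\gG r/n)$-sparse there is a vertex set $V$ with $s:=|V|\le\aA n$ and $|H[V]|>s\gG r$, so $|H[V]|\ge t:=\lfloor s\gG r\rfloor+1$ and $H[V]$ contains a set $S$ of exactly $t$ edges, each with both ends in $V$. Before summing, note the range of $s$ is restricted: since $H\sub K_{n,n}$, an $s$-set spans at most $\lfloor s^2/4\rfloor$ edges, while $t>s\gG r>5s$ (using $r>5/\gG$), so $5s<s^2/4$ and hence $s\ge 21$; and $s\le\aA n<n/4$, so if $n$ is below an absolute constant there is no admissible $s$ and the statement is vacuous. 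Thus the failure probability is at most
\[ \sum_{s=21}^{\aA n}\binom{2n}{s}\binom{\lfloor s^2/4\rfloor}{t}\left(\frac{Cr}{n}\right)^{t}. \]

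Next I would estimate a single summand. Using $\binom{2n}{s}\le(2en/s)^{s}$ and $\binom{\lfloor s^2/4\rfloor}{t}\le\left(\tfrac{es^{2}}{4t}\right)^{t}\le\left(\tfrac{es}{4\gG r}\right)^{t}$ (the last step because $t>s\gG r$), the factors of $r$ cancel and the $s$th summand is at most $(2en/s)^{s}(eCs/(4\gG n))^{t}$. The hypotheses $\aA<1/4$ and $\gG>C(20\aA)^{1/5}$ give $eCs/(4\gG n)\le eC\aA/(4\gG)<e/4<1$, so since $t>5s$ the exponent $t$ may be lowered to $5s$, and collecting terms the summand is at most $(e^{6}C^{5}s^{4}/(512\gG^{5}n^{4}))^{s}$. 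The decisive step is then to write $\gG^{5}>20C^{5}\aA\ge 20C^{5}(s/n)$, valid since $s\le\aA n$, and use it to trade a factor $s/n$ for $C^{5}/\gG^{5}$; this leaves the $s$th summand bounded by $(e^{6}s^{3}/(10240\,n^{3}))^{s}\le(s^{3}/(25n^{3}))^{s}$. Since $s<n/4$ the base is below $1/1600$, and a one-line ratio estimate shows these summands decay geometrically in $s$, so the whole sum is within a factor $2$ of its $s=21$ term, which is of order $n^{-63}$; this is well below $n^{-14}$ for all $n$ in the admissible range.

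The only genuinely delicate point --- and the reason the hypotheses take exactly the form $\gG>C(20\aA)^{1/5}$, $r>5/\gG$ --- is the trade in the previous paragraph. Substituting $s\le\aA n$ directly into $e^{6}C^{5}s^{4}/(512\gG^{5}n^{4})$ yields only a per-term bound of shape $(\mathrm{const}\cdot\aA^{3})^{s}$, whose sum over $s\ge 21$ is a fixed small constant rather than a quantity tending to $0$ with $n$ --- useless here. Keeping one extra factor $s/n$ in play and spending it against $\gG^{5}\ge 20C^{5}\aA$ is what converts the bound into genuine $n^{-\Omega(s)}$ decay. Everything else --- the binomial estimates, the cancellation of the $r$'s, and the geometric-series summation --- is routine.
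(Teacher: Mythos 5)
Your proposal is correct and is essentially the paper's own argument: a union bound over vertex sets and over edge sets inside them, the spreadness bound $(Cr/n)^{t}$, the binomial estimates, lowering the exponent to $5s$ via $\gG r>5$, and spending $\gG^{5}>20C^{5}\aA$ against a factor $s/n$ to get $n^{-\Omega(s)}$ decay. The only differences are cosmetic bookkeeping (e.g.\ your cutoff $s\ge 21$ versus the paper's $k>\gG r$, and your explicit handling of small $n$).
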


Note that for example that if $r>n/C$ then 
the distribution could be supported on a single graph $H$.
Here $\gG r/n > \gG/C > (20\aA)^{1/5}$
and for any $V \sub V(H)$ with $|V| < 2\aA n$
we have $|H[V]| \le |V|^2/4 < |V| \aA n < |V| \gG r$,
so $H$ is trivially $(\aA,\gG r/n)$-sparse.

\begin{proof}
We apply a union bound to estimate the probability
that there is some $V \sub V(H)$ with $|V| = k < 2\aA n$
such that $|H[V]| \ge \gG r k$.
Note that this is only possible if $k > \gG r$.
For fixed $k$, we bound the failure probability by
$\tbinom{2n}{k} \tbinom{k^2/4}{\gG r k} (Cr/n)^{\gG r k}
 < (2en/k)^k (k/r\gG \cdot Cr/n)^{\gG r k}
 < ( 2en/k \cdot (kC/n\gG)^5 )^k < (k/n)^{3k}$,
using $k/n < 2\aA < .1(\gG/C)^5$ and $\gG r > 5$.
Summing over $k$ with $2\aA n > k > \gG r > 5$ 
gives failure probability $< n^{-14}$.
\end{proof}

We also require the following similar lemma controlling small sets
in which many pairs have distinct common neighbours.

\begin{lem} \label{lem:sparse2}
Let $C,r>1$ and $n > (Cr)^8$.
Suppose $H$ is sampled from a $Cr/n$-spread distribution
on subgraphs of $K_{n,n}$. 
Then with failure probability $< n^{-40}$
we do not have disjoint $S,T \sub V(H)$ with $|T|=20|S| < \sqrt{n}$
and distinct pairs $(s^1_t s^2_t: t \in T)$ such that
each $s^i_t \in S$ and $ts^i_t \in E(H)$.
\end{lem}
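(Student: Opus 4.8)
The plan is a union bound over all potential bad configurations, where by a bad configuration I mean a choice of disjoint $S,T \sub V(H)$ with $|T| = 20|S|$ together with, for each $t \in T$, a pair $s^1_t,s^2_t \in S$ with $ts^1_t, ts^2_t \in E(H)$, with these $|T|$ pairs pairwise distinct. Write $k = |S|$. A crude counting constraint comes first: there are at most $k^2$ choices of a pair from $S$, so having $20k$ distinct ones forces $k \ge 20$ (if the pairs are genuine unordered pairs with distinct endpoints one even gets $k \ge 41$, but $k \ge 20$ is all I need). In particular every bad configuration has $|T| \ge 400$, so if $\sqrt n \le 400$ the statement holds vacuously and I may assume $n$ is large.

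The second ingredient is that a bad configuration forces at least $39k$ edges of $H$ to be present, namely the edges $ts^i_t$ for $t \in T$, $i \in \{1,2\}$. Edges coming from distinct values of $t$ are distinct (and none is internal to $S$ or to $T$, since $S \cap T = \es$), and for a fixed $t$ these are two distinct edges unless $s^1_t = s^2_t$; since the pairs are distinct, at most $k$ values of $t$ can have $s^1_t = s^2_t$, so the number of distinct forced edges is at least $2\cdot 20k - k = 39k$. As $n > (Cr)^8$ we have $Cr/n < 1$, so the $Cr/n$-spreadness of the distribution bounds the probability that all these edges lie in $H$ by $(Cr/n)^{39k}$.

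It remains to count and sum. The number of bad configurations with $|S| = k$ is at most $\binom{2n}{k}\binom{2n}{20k}(k^2)^{20k}$ (choose $S$, then $T$, then a pair of $S$ for each of the $20k$ vertices of $T$), and by $\binom ab \le (ea/b)^b$ this is at most $(2en/k)^k(en/(10k))^{20k}k^{40k}$. Multiplying by $(Cr/n)^{39k}$ and collecting exponents, the powers of $n$ contribute $n^{k+20k-39k} = n^{-18k}$, the powers of $k$ contribute $k^{-k-20k+40k} = k^{19k}$, and the rest is $(2e)^k(e/10)^{20k}(Cr)^{39k}$, so the contribution from a given $k$ is at most
\[ \big[\, c\,(Cr)^{39}\,k^{19}\,n^{-18} \,\big]^k , \]
where $c = 2e(e/10)^{20}$ is an absolute constant, easily checked to be at most $1$. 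Now I would use the two hypotheses: $n > (Cr)^8$ gives $(Cr)^{39} \le n^{39/8} < n^{4.9}$, and $20k \le |T| < \sqrt n$ gives $k^{19} < n^{9.5}$, so the bracket is at most $n^{4.9+9.5-18} = n^{-3.6}$. Hence the contribution from each $k \ge 20$ is at most $n^{-3.6k} \le n^{-72}$, and summing the geometrically decaying series over $20 \le k < \sqrt n/20$ gives total failure probability at most $2n^{-72} < n^{-40}$, as required.

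The only real obstacle I anticipate is the exponent bookkeeping. The factor $(k^2)^{20k}$ from assigning a pair of $S$ to every vertex of $T$ is enormous; it is defeated only because three gains line up — the constraint $|T| = 20|S|$ makes $\binom{2n}{|T|}$ supply a matching power $n^{20k}$, the hypothesis $n > (Cr)^8$ keeps $(Cr)^{39}$ below $n^5$, and the restriction $|T| < \sqrt n$ keeps $k^{19}$ below $n^{9.5}$ — and together these leave a net negative power of $n$ in the bracket. The numerical hypotheses of the lemma are calibrated precisely to make this net exponent negative, so the one thing to verify carefully is that it indeed stays comfortably below zero (here, $-3.6$).
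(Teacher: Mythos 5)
Your proposal is correct and follows essentially the same route as the paper: a union bound over the choices of $S$, $T$ and the assigned pairs, with spreadness paying $(Cr/n)^{\Theta(k)}$ for the forced edges, and the constraints $|T|=20|S|<\sqrt n$ and $n>(Cr)^8$ making the net exponent of $n$ negative; your bookkeeping (including the careful $39k$ distinct-edge count and the vacuous small-$n$ case) checks out. The paper's version is just slightly cruder in the same spirit, using $(k^2/4)^{20k}$ pair choices, $40k$ edges, and the observation $k>40$, arriving at a per-$k$ bound of $n^{-4k}$.
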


\begin{proof}
We apply a union bound to estimate the probability 
of having such $S,T$ with $|S|=k$.
Note that we can only have $20k$ distinct pairs in $S$ if $k>40$.
For fixed $k$, we bound the probability by
$\tbinom{2n}{k} \tbinom{2n}{20 k} (k^2/4)^{20 k} (Cr/n)^{40 k}
 < (n/k)^k (kn \cdot (Cr/n)^2)^{20 k}
 = ( n/k \cdot (C^2 r^2 k/n)^{20} )^k < n^{-4k}$,
using $k < \sqrt{n}$ and $C^2 r^2 < n^{1/4}$. The lemma follows.
\end{proof}

\subsection{Quasirandomness}

Let $H$ be a bipartite graph on $(U_1,U_2)$ with $|U_1|=|U_2|=n$.
We say that $H$ is \emph{$(\dD,\dD',p)$-dense}
if for every $V_i \sub U_i$ with $|V_i| \ge \dD' n$ for $i=1,2$ 
we have $|H[V_1,V_2]| \ge (1-\dD)p|V_1||V_2|$.
We say that $H$ is \emph{$(\dD,\dD',\eta,p)$-quasirandom}
if it is $(1 \pm \dD)pn$-regular,
$(3\dD',\eta p)$-sparse and $(\dD,\dD',p)$-dense.

The above is a slight generalisation of a definition from \cite{KKKMO}:
$(\dD,p)$-quasirandom in their terminology follows from
$(\dD,\dD',1/9,p)$-quasirandom for some $\dD' \le \dD$.

Note that if $\dD' \ll \eta \ll p, \dD$
and $H \sub K_{n,n}$ is regular and $(\dD,p)$-uniform
then $H$ is $(\dD,\dD',\eta,p)$-quasirandom. 
Indeed, approximate regularity follows from regularity,
density follows from uniformity, and for sparsity we note that
if $V \sub V(H)$ with $|V| \le 6\dD' n$
then $|H[V]| \le |V|^2 \le |V| \eta p n$.

We require the following lemma from \cite{KKKMO}
(the maximum degree bound $\DD(R \cap H)<4\dD pn$
is not explicitly stated there, 
but follows from the proof).

\begin{lem} \label{lem:reg} \cite[Lemma 4.4]{KKKMO}
Let $1/n \ll \dD \ll 1$. 
Suppose $H \sub K_{n,n}$ is $(\dD,p)$-quasirandom
and $L \sub K_{n,n} \sm H$ is $(x \pm \dD)pn$-regular,
for some $p$ and $x$.
Then there exists $R \sub H \cup L$ such that 
$R \sups L$ is spanning and regular
and $\DD(R \cap H)<4\dD pn$. 
\end{lem}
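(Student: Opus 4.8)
The plan is to realise $R$ by adjoining to $L$ a subgraph of $H$ with prescribed, uniformly small vertex degrees. Fix an integer $D$ a little above $(x+\dD)pn$ — concretely $D = \lceil (x+2\dD)pn \rceil$ — and set $f(v) = D - |L(v)|$ for every $v\in U_1\cup U_2$. Since $L$ is $(x\pm\dD)pn$-regular we get $\dD pn \le f(v) < 4\dD pn$ for all $v$, and since $\sum_{v\in U_1}|L(v)| = |E(L)| = \sum_{v\in U_2}|L(v)|$ we get the balance $\sum_{v\in U_1}f(v) = Dn - |E(L)| = \sum_{v\in U_2}f(v)$. It now suffices to find a subgraph $M\sub H$ with $|M(v)| = f(v)$ for all $v$ — an $f$-factor of $H$: as $L$ and $H$ are edge-disjoint, $R := L\cup M$ is then a spanning $D$-regular graph with $L\sub R\sub H\cup L$ and $R\cap H = M$, so $\DD(R\cap H) = \max_v f(v) < 4\dD pn$.

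For the existence of such an $f$-factor I will use the standard criterion for degree-constrained subgraphs of a bipartite graph (a defect form of Hall's theorem, equivalently max-flow--min-cut for the network that routes a source into $U_1$ with capacities $f$, the edges of $H$ from $U_1$ to $U_2$ with unit capacities, and $U_2$ into a sink with capacities $f$): an integral flow of value $\sum_{v\in U_1}f(v)$ exists — and, by the balance above, it then also saturates every sink edge, so its support among the edges of $H$ is the desired $M$ — if and only if
\[ |H[A,\ U_2\sm B]| \ \ge\ \textstyle\sum_{u\in A}f(u) - \sum_{w\in B}f(w) \qquad \text{for all } A\sub U_1,\ B\sub U_2. \]
I will call this condition $(\star)$.

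It remains to verify $(\star)$ from the quasirandomness of $H$, and the crux is that the right-hand side of $(\star)$ is small. Put $C = U_2\sm B$. Using $f(v) < 4\dD pn$ on $A$ and $f(v) \ge \dD pn$ on $B$ gives $\sum_A f - \sum_B f \le \dD pn(4|A| + |C| - n)$, while rewriting $\sum_A f - \sum_B f = \sum_{U_2\sm B}f - \sum_{U_1\sm A}f$ via the balance and arguing likewise gives $\sum_A f - \sum_B f \le \dD pn(|A| + 4|C| - n)$; so $(\star)$ is trivial unless both $4|A|+|C|>n$ and $|A|+4|C|>n$, and, interchanging the two sides of the bipartition if necessary, we may assume $|A| \le |C|$, whence $|C| > n/5$ and it suffices to show $|H[A,C]| \ge \dD pn(4|A|+|C|-n)$. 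If $|A| \ge \dD'n$ then $|A|,|C| \ge \dD'n$, so the density part of quasirandomness gives $|H[A,C]| \ge (1-\dD)p|A||C| \ge \tfrac15(1-\dD)pn|A|$, which beats the right-hand side (at most $4\dD pn|A|$, since $|C|\le n$) as $\dD$ is small. If $|A| < \dD'n$ then, triviality being excluded, $|C| > n - 4|A| > (1-4\dD')n$, so $U_2\sm C = B$ is small; moreover $|B| < 4|A|$, since $|B| \ge 4|A|$ would give $\sum_B f \ge \dD pn|B| \ge 4\dD pn|A| > \sum_A f$, making $(\star)$ trivial. Thus $A\cup B$ is a small set, and $|H[A,C]| = \sum_{u\in A}|H(u)| - |H[A,B]| \ge (1-\dD)pn|A| - |H[A\cup B]| \ge (1-\dD)pn|A| - \eta pn|A\cup B| \ge \tfrac13 pn|A|$ by the sparsity part of quasirandomness, again beating the right-hand side.

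The real content is this verification of $(\star)$: although each step is short, one must (i) take $D$ towards the upper end of its admissible range so that all deficiencies $f(v)$ are uniformly of order $\dD pn$ — this uniform positivity, set against the $\approx pn$ scale of the edge counts $|H[A,C]|$, is the sole source of the slack — and (ii) be somewhat careful in splitting into the density regime and the regime where $B$ is small, and in matching up the size thresholds, since $(\dD,p)$-quasirandomness supplies only a one-sided (lower) density estimate: a mixed block $H[A,B]$ whose sides have different orders can be controlled only indirectly, through $|H[A,C]| = \sum_{u\in A}|H(u)| - |H[A,B]|$ together with regularity and a sparsity (or, for very small sets, trivial) bound on $|H[A,B]|$. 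That bookkeeping, rather than any individual inequality, is the only genuine obstacle.
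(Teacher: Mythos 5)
You should first note that the paper does not actually prove this lemma: it is quoted from \cite[Lemma 4.4]{KKKMO}, with only the remark that the bound $\DD(R\cap H)<4\dD pn$ is implicit in that proof. So your argument is necessarily an independent route, and it is a sensible one: fixing a target degree $D$ so that all deficiencies $f(v)=D-|L(v)|$ lie in $[\dD pn,4\dD pn)$, and producing $M=R\cap H$ as an $f$-factor of $H$ via the max-flow/defect-Hall criterion $(\star)$, is a clean reduction, and your bookkeeping (the two complementary bounds on $\sum_A f-\sum_B f$, the symmetry allowing $|A|\le|C|$, the conclusion $|C|>n/5$, and $|B|<4|A|$ in the small case) is correct as far as it goes.

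There are, however, two points where the verification of $(\star)$ does not match the hypotheses you are entitled to. First, the stated hypothesis is KKKMO's $(\dD,p)$-quasirandomness, which this paper only tells us is \emph{implied by} its own $(\dD,\dD',1/9,p)$-quasirandomness; your small-$A$ case invokes a ``sparsity part'' with parameters $\eta,\dD'$ that appear nowhere in the lemma. Your instinct that some upper bound on edges inside small sets is needed is right and not removable: with only the degree and lower-density conditions the statement fails when $p$ is of order $d/n$ (hide a balanced pair $A_0,B_0$ of size $\dD^3 n$ all of whose incident $H$-edges lie in $A_0\times B_0$, and give $L$ degree $(x+\dD)pn$ on $A_0$ and $(x-\dD)pn$ on $B_0$; then no spanning regular $R\sups L$ inside $H\cup L$ exists at all, since counting $R\cap H$ edges between $A_0$ and $B_0$ from the two sides forces $\sum_{A_0}\deg_L=\sum_{B_0}\deg_L$). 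So you must say explicitly that you are proving the lemma under the paper's notion of quasirandomness, which is the one available at the point of application. Second, even granting that notion, the ranges do not fit: sparsity is only assumed for sets of size at most $3\dD'n$, while in your small case you only get $|B|<4|A|$, hence $|A\cup B|$ up to $5\dD'n$. Note the ratio $\max f/\min f$ cannot be pushed below $2$ if you insist on $\max f<4\dD pn$ (since $\max f-\min f=2\dD pn$), so simply re-tuning $D$ only gets $|A\cup B|$ down to just above $3|A|$; you need an extra step, e.g.\ handle $|A|\le pn/5$ by the trivial bound $|H[A,B]|\le|A||B|$, use sparsity only for $|A|\le \tfrac12\dD'n$ say, and treat the remaining range $\tfrac12\dD'n<|A|<\dD'n$ by applying the density condition to a superset of $A$ of size exactly $\dD'n$ together with the degree upper bound. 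With such a patch (and the harmless proviso $\dD pn\ge 1$ for your bound $f(v)<4\dD pn$) the proof is complete.
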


\section{Iterative absorption}

Our main result \cref{thm:main} follows from the following
stronger version in which there is \emph{no non-trivial assumption} on $r$.
We also weaken the uniformity assumption to a density assumption.

\begin{thm} \label{thm:main+}
Let $1/K \ll \dD' \ll 1/C \ll \dD \ll 1$ and $r \in [n]$.
If $H$ is a sample from a $Cr/n$-spread distribution on
$r$-regular $(\dD,\dD',r/n)$-dense subgraphs of $K_{n,n}$
then there is a `good' event $G$ for $H$ with $\mb{P}(G)>1-n^{-8}$
and a $K/n$-spread probability distribution 
on $1$-factorisations of $H \vert G$.
\end{thm}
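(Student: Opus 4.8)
The plan is to build the $1$-factorisation by \emph{iterative absorption}: peel off colours (equivalently, spanning subgraphs) a bounded chunk at a time, at each stage passing from an $r$-regular graph to an $r'$-regular residual graph with $r'$ a constant fraction smaller, until $r$ drops below some absolute constant, and then finish off the bounded-degree remainder directly. The key engine is a \textbf{single-step reduction lemma}, which I would prove first: given $H$ sampled from a $Cr/n$-spread distribution on $r$-regular $(\dD,\dD',r/n)$-dense subgraphs of $K_{n,n}$ with $r$ not too small, there is a good event of probability $>1-n^{-O(1)}$ on which one can randomly choose a \emph{regular factorisation} $(H_1,\dots,H_t,H')$ where each $H_i$ is $d$-regular for some absolute constant $d=O(1)$, the residual $H'$ is $r'$-regular with $r'=r-td=(1-c)r$ for some fixed $c>0$, and — crucially — the map $H \mapsto (H_1,\dots,H_t,H')$ is \emph{conditionally $O(1)/r$-spread} as a factorisation (so the conditional distribution of $H'$ given the relevant containment events is again $O(C)r'/n$-spread and still $r'$-regular and dense on the same vertex set). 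Since each $H_i$ is a bounded-degree regular bipartite graph, \cref{lem:spreadmatch} (applied after checking super-regularity) gives an $L/d$-spread distribution on a $1$-factorisation of each $H_i$; peeling matchings one at a time and using \cref{lem:iterspread} with conditional independence, each such piece is refined into $\le d$ matchings at conditional spread cost $O(1/n)$ \emph{per matching}, hence $O(1/n)$ total since $d=O(1)$.

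With the reduction lemma in hand, the argument is the bootstrap. Set $r_0=r$ and $r_{j+1}=(1-c)r_j$, stopping at the first stage $J$ with $r_J \le r^* $ for a suitably large absolute constant $r^*$; note $J = O(\log n)$. At stage $j$ I would apply the reduction lemma to the (conditional) distribution of the current residual graph, which by induction is a sample from a $C_j r_j/n$-spread distribution on $r_j$-regular $(\dD,\dD',r_j/n)$-dense subgraphs of $K_{n,n}$; the spread constant $C_j$ grows, but only by a bounded factor per stage coming from the conditioning on good events (each of probability $\ge 1/2$, using the doubling observation for $q$-spread distributions), so $C_j \le C \cdot 2^{O(j)}$ — this is where I must be careful, because $2^{O(J)}$ could be polynomial in $n$. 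The fix is that the spread bound I actually need at the end is $K/n$ with $K$ \emph{independent of $n$}, and the product of the per-stage spread costs $O(1)/r_j$ over $j=0,\dots,J$ is a \emph{geometric-type sum} $\sum_j O(1)/r_j = O(1)/r_J = O(1)$, so the accumulated factorisation is $O(1)/n$-spread \emph{provided} I track the spread of the factorisation-so-far rather than of the residual graph, and use \cref{lem:iterspread} once at the end. To control the failure probabilities and keep $\dD,\dD'$ from degrading, I would fix the hierarchy $1/K \ll \dD' \ll 1/C \ll \dD \ll 1$ at the outset with enough room to absorb $O(\log n)$ bounded multiplicative losses; each application of the reduction lemma and of \cref{lem:reg}, \cref{lem:sparse}, \cref{lem:sparse2} contributes failure probability $n^{-\OO(1)}$, and a union bound over the $O(\log n)$ stages keeps the total below $n^{-8}$.

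The endgame: once $r_J \le r^*$, the residual $H''$ is a bounded-degree $r_J$-regular bipartite graph, still dense and sampled from an $O(1) \cdot r_J/n$-spread distribution on a fixed vertex set. Here uniformity/density forces $H''$ to be essentially a union of $r_J$ near-perfect matchings after a mild regularisation via \cref{lem:reg} (to restore exact regularity if the residual degrees have drifted to $(1\pm\dD)r_J$ — this is exactly why the lemma is stated for $L$ regular and $H$ quasirandom); and since $H''$ is bipartite and regular it has a $1$-factorisation by Hall's theorem, but I want a \emph{spread} one, so I again invoke \cref{lem:spreadmatch} $r_J=O(1)$ times, at total conditional spread cost $O(1)/n$. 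Concatenating via \cref{lem:iterspread}, the full $1$-factorisation of $H\vert G$ — where $G$ is the intersection of all the good events — is $O(1)/n$-spread, hence $K/n$-spread for $K$ large enough depending only on the hierarchy constants, as required.

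\textbf{Main obstacle.} The heart of the difficulty is the reduction lemma, specifically the simultaneous demand that the randomly chosen bounded-degree pieces $H_i$ be \emph{approximately regular} (degrees $(1\pm\dD)d$, not just bounded average degree) \emph{and} that the construction be $O(1/r)$-spread. As the introduction flags, vertex degrees in random sparse subgraphs of density $O(1/n)$ do not concentrate, so a naive random choice of $d$-subgraph (e.g.\ each edge independently with probability $d/r$) gives degrees Poisson$(d)$ — not close to $d$ — while any process rigid enough to enforce regularity risks destroying spreadness. The plan is a \emph{constrained random greedy / colour-degree-balanced process}: build $H_1,\dots,H_t$ by a random proper-edge-colouring-like procedure with $t=O(1)$ colour classes, where at each step the colour of an edge is chosen uniformly among colours not yet "saturated" at either endpoint (a saturation cap of $d=O(1)$), arguing via \cref{lem:azuma} (tracking the number of available colours at a vertex as a supermartingale with bounded increments) that whp no vertex gets stuck and every vertex ends within $(1\pm\dD)d$ in each used colour, while spreadness follows because each edge's colour is, conditionally, nearly uniform over a set of $\TT(t)=\TT(1)$ colours, so the probability any fixed set $S_i\subseteq E(H)$ lands entirely in $H_i$ is $\le (O(1)/r)^{|S_i|}$ after accounting for the additional choice of which edges survive into the peeled part. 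Getting the "goldilocks" cap $d$ and the number of classes $t$ right — constrained enough for regularity, loose enough for spread — is the delicate calculation I would expect to occupy most of the technical work.
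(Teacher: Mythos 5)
Your high-level architecture (a spread, approximately regular, bounded-degree random greedy splitting as the engine, iterated with absorption over $O(\log n)$ stages, with the per-stage spread costs multiplying out to $O(1/n)$ per edge) is the same philosophy as the paper, but your reduction lemma as stated is not delivered by your proposed process, and the missing piece is exactly the structural heart of the paper's argument. Your constrained greedy colouring can only give degrees $(1\pm\dD)d$ in each class (as you yourself say in the obstacle paragraph), yet your reduction lemma and its downstream uses need \emph{exactly} $d$-regular pieces (to refine into perfect matchings) and an \emph{exactly} $r'$-regular residual (to run the induction on "$r_j$-regular dense" distributions). Bridging approximate to exact regularity is where \cref{lem:reg} must be used with a \emph{reservoir disjoint from the leftover}, and that reservoir must itself be a bounded-degree quasirandom graph, or else the correction costs too much degree: if you regularise against the whole residual you may add up to $4\dD r_j$ edges at a vertex, destroying boundedness. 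The paper's solution is to perform the splitting once into a dyadic vortex $(H_{i,j})$ and to absorb the irregular leftover of scale $i$ into the pre-reserved next-scale pieces $H_{i+1,j}$ (adding only $O(\dD d)$ degree), with the leftover's spread bounded by that of $H_i$; your scheme has no analogue of this reservation, and without it neither the exact regularity nor the spread bookkeeping for the residual goes through. Relatedly, your claim that $(H_1,\dots,H_t,H')$ is conditionally $O(1)/r$-spread "as a factorisation" cannot be right for the residual piece: a single edge lies in $H'$ with conditional probability about $1-c$, not $O(1/r)$, so you need a separate, weaker conditional-spread statement for $H'$ and a proof that spread, regularity and density survive $O(\log n)$ rounds of peeling — none of which is routine.

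A second concrete error: \cref{lem:spreadmatch} does not apply where you invoke it. Both the refinement of each $d$-regular piece $H_i$ into matchings and your endgame on the $O(1)$-regular residual concern bipartite graphs of density $O(1/n)$, whereas the lemma requires $(\dD,p)$-super-regularity with $p$ bounded below by a constant ($\dD\ll 1/L\ll p$); the paper only ever uses it on the dense auxiliary graphs $B_v$ arising in cleaning steps. The refinement step is in fact free once exact regularity is in hand: an arbitrary deterministic refinement of a $q$-spread regular factorisation into a $1$-factorisation is automatically $q$-spread, so no spread-matching lemma is needed there — but this again presupposes the exact regularity your process does not provide, returning you to the first gap.
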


As any distribution is trivially $1$-spread,
by applying \cref{thm:main+} 
to the distribution that always outputs $H$,
for which we must have $\mb{P}(G)=1$,
we have the following corollary
which implies \cref{thm:main}.

\begin{cor}
Let $1/K \ll \dD' \ll \aA \ll \dD \ll 1$.
If $H \sub K_{n,n}$ is $r$-regular 
and $(\dD,\dD',r/n)$-dense with $r \ge \aA n$
then there is a $K/n$-spread probability distribution 
on $1$-factorisations of $H$.
\end{cor}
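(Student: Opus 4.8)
The plan is to deduce the corollary from \cref{thm:main+}, but that reduction is essentially automatic: apply \cref{thm:main+} to the degenerate distribution supported on the single graph $H$. To do this legitimately I first need to check that the hypotheses of \cref{thm:main+} are met. The degenerate distribution is trivially $1$-spread, hence $Cr/n$-spread for any $C \ge n/r$; since $r \ge \aA n$ we have $n/r \le 1/\aA$, so with the hierarchy $1/K \ll \dD' \ll \aA \ll \dD \ll 1$ we may pick $C$ with $1/\aA \le C$ and still maintain $1/K \ll \dD' \ll 1/C \ll \dD$ (here the parameter $\aA$ plays the role of the lower bound on $1/C$, and shrinking the admissible range of $1/C$ to lie in $[\,1/\text{something},\aA\,]$ is harmless). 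The graph $H$ is $r$-regular and $(\dD,\dD',r/n)$-dense by assumption, so it is indeed a sample from a $Cr/n$-spread distribution on $r$-regular $(\dD,\dD',r/n)$-dense subgraphs of $K_{n,n}$. Thus \cref{thm:main+} applies.

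Next I would extract the conclusion. \cref{thm:main+} gives a `good' event $G$ for $H$ with $\mb{P}(G)>1-n^{-8}$ together with a $K/n$-spread distribution on $1$-factorisations of $H\vert G$. But for the degenerate distribution the only possible outcome is $H$ itself, so the event $G$ either has probability $0$ or probability $1$; since $\mb{P}(G)>1-n^{-8}>0$ we must have $\mb{P}(G)=1$, and therefore $H\vert G$ is just $H$. Hence the $K/n$-spread distribution on $1$-factorisations of $H\vert G$ is literally a $K/n$-spread distribution on $1$-factorisations of $H$, which is exactly the assertion of the corollary. (One should note in passing that $H$, being bipartite and regular, does admit $1$-factorisations, so the distribution is supported on a non-empty set; this is the remark recorded earlier in the Definitions subsection.)

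The only genuine subtlety — and the step I would expect to need the most care in writing up, trivial though it is conceptually — is the bookkeeping of the parameter hierarchy: one must be sure that the chain $1/K \ll \dD' \ll \aA \ll \dD \ll 1$ in the corollary can be fed into the chain $1/K \ll \dD' \ll 1/C \ll \dD \ll 1$ of \cref{thm:main+} by choosing $C$ appropriately as a function of $\aA$, using $r \ge \aA n$ to guarantee the spreadness bound $Cr/n \ge 1$. Everything else is formal. No absorption, no probabilistic argument, and no appeal to the earlier lemmas on sparsity or quasirandomness is needed here; all of that work has already been done inside \cref{thm:main+}.
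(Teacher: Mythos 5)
Your proposal is correct and is essentially the paper's own deduction: the paper likewise applies \cref{thm:main+} to the distribution that always outputs $H$, noting it is trivially $1$-spread (hence $Cr/n$-spread with $1/C \le \aA$, compatible with the hierarchy) and that the degenerate distribution forces $\mb{P}(G)=1$, so the conclusion holds for $H$ itself.
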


In this section we will deduce \cref{thm:main+} 
from the following lemma, 
whose proof is deferred to the next section.

\begin{lem} \label{lem:rough}
Let $1/K \ll 1/d \ll \dD' \ll \eta \ll 1/C \ll \dD \ll 1$.
Suppose $m,n \in \mb{N}$ with $n \ge K$. 
Let $H$ be sampled from a $Cdm/n$-spread distribution $\mc{D}$
on $(1 \pm \dD)dm$-regular subgraphs of $K_{n,n}$.

1. There is a good event $G$ for $H$ with $\mb{P}(G) > 1-n^{-9}$
and a conditionally $C/m$-spread probability distribution $\mc{D}'$
on $(1 \pm 2\dD)d$-regular factorisations $(F_1,\dots,F_m)$ of $H \vert G$.

2. If $\mc{D}$ is supported on
$(\dD,\dD',dm/n)$-dense subgraphs of $K_{n,n}$
then we can take $\mc{D}'$ supported on
$(2\dD,\dD',\eta,d/n)$-quasirandom factorisations $(F_1,\dots,F_m)$ of $H$.
\end{lem}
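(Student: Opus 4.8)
The plan is to build the factorisation $(F_1,\dots,F_m)$ by a constrained random greedy process: process the edges of $H$ in some order (fixed given $H$, e.g.\ a degeneracy-type order coming from the sparsity lemmas), and assign each edge $e=uv$ a uniformly random colour from among those colours $c$ that are still ``available'' for $e$ in the sense that neither $u$ nor $v$ has yet accumulated too many colour-$c$ edges. Concretely, I would cap the colour-$c$ degree of each vertex at roughly $d$ (say $(1+\dD)d$, with a little slack reserved so the final graph is genuinely $(1\pm2\dD)d$-regular), and discard a colour at a vertex once that cap is hit. The intuition is that $H$ has $(1\pm\dD)dm$ edges at each vertex, there are $m$ colours, and each colour should get a $\approx 1/m$ fraction, i.e.\ $\approx d$ edges per vertex per colour; the caps are chosen in the ``goldilocks zone'' — tight enough to force $(1\pm2\dD)d$-regularity of each $F_c$, loose enough that at almost every step almost all $m$ colours are still available, so the process looks close to a fully uniform $m$-colouring and spreadness is not destroyed.

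For the spreadness bound (part 1), I would argue as in the matching/colouring spread arguments: fix target sets $S_1,\dots,S_m\sub E(H)$ and bound $\mb{P}(\bigcap_c\{S_c\sub F_c\}\mid \bigcap_c\{S_c\sub H\})$. Conditioning on $H$ (which contributes nothing beyond the $1$-spread trivial factor, since we work conditionally), each edge $e\in S_c$ must be dealt colour $c$ when it is processed, and conditioned on the entire history up to that point the edge receives colour $c$ with probability $1/(\text{number of available colours at }e)$. On the good event $G$ — defined to include (i) the conclusions of \cref{lem:sparse} and \cref{lem:sparse2} applied to $H$, giving $(\aA,\gG dm/n)$-sparsity and the no-many-distinct-common-neighbours property, and (ii) the event that throughout the process every vertex in the support of the $S_c$'s has at most, say, $\dD m$ colours blocked when the relevant edges are processed — this probability is at most $1/((1-\dD)m)$ for all but a negligible set of ``bad'' edges, and the bad edges can be absorbed into the failure term. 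Chaining over all edges in $\bigcup_c S_c$, and using \cref{lem:bernoulli-domination}/Chernoff to control the number of bad steps, yields $\le (C/m)^{\sum_c|S_c|}$ for suitable $C$. The role of sparsity is exactly to bound how many edges at a single vertex can be ``early'' (hence have many blocked colours) in the processing order.

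For the regularity and quasirandomness of each $F_c$ (part 2) I would track, for each vertex $v$ and colour $c$, the number $D_c(v)$ of colour-$c$ edges at $v$, using Azuma's inequality (\cref{lem:azuma}) or a Freedman-type martingale bound on the process. One-step changes are $O(1)$, and over the $\approx dm$ edges at $v$ the variance proxy is $O(dm)$, which after a union bound over the $\le n$ vertices and $\le m$ colours forces $D_c(v)=(1\pm2\dD)d$ — provided the caps never actually bind for more than a $\dD$-fraction of vertices, which is itself part of the good-event bookkeeping. For $(\dD,\dD',r/n)$-dense $H$ one similarly estimates $|F_c[V_1,V_2]|$ for all large $V_1,V_2$: its expectation is $\approx |H[V_1,V_2]|/m = (1\pm O(\dD))\,(d/n)|V_1||V_2|$, and Azuma plus a union bound over the (exponentially many) pairs $(V_1,V_2)$ gives $(2\dD,\dD',d/n)$-density; sparsity of $F_c$ on sets of size $\le 6\dD' n$ is automatic since there $|F_c[V]|\le|V|^2\le|V|\eta(d/n)n$, and combined with density this is exactly $(2\dD,\dD',\eta,d/n)$-quasirandomness.

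The main obstacle, I expect, is precisely the tension flagged in the paper's introduction: choosing the colour caps and the processing order so that the process is simultaneously (a) concentrated enough for the Azuma argument to pin vertex degrees into $(1\pm2\dD)d$ despite the $O(1/n)$ density meaning a single vertex has only $O(dm)$ edges with no independence across colours, and (b) not so constrained that, near the end of the process, typical edges have $\Omega(m)$ blocked colours — which would blow up the $1/(\text{available colours})$ factor and wreck the $C/m$-spread bound. Making the ``few blocked colours at almost every edge'' statement quantitative, and showing the exceptional edges are rare enough (geometrically rare along the order, via the sparsity lemmas) to be swept into the $n^{-9}$ failure probability, is the delicate heart of the argument; everything else is union bounds and Chernoff/Azuma.
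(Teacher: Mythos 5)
There is a genuine gap, and it is exactly the difficulty the paper flags as the heart of the problem: your process has only an \emph{upper} cap on colour degrees, and you propose to get the matching lower bound $\col_c(v)\ge(1-2\dD)d$ by Azuma plus a union bound over the $\le n$ vertices and $\le m$ colours. This cannot work. Each vertex has only $\approx dm$ edges and each colour class has expected degree $d$, where $d$ is a constant independent of $n$ (it is only large in terms of $\dD,\eta,\dD'$). The colour-$c$ degree of $v$ is a sum of $\approx dm$ indicators of probability $\approx 1/m$, so any martingale/Chernoff bound gives a deviation probability of order $e^{-\Theta(\dD^2 d)}$ for the event $|\col_c(v)-d|>\dD d$ --- a small \emph{constant}, not $o(1/(nm))$. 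After the union bound you expect a positive proportion of the $nm$ vertex--colour pairs to violate the degree window, so the output is not a $(1\pm2\dD)d$-regular factorisation with probability $1-n^{-9}$; at best it is approximately regular at most vertices. (A secondary issue of the same flavour: with only per-vertex caps and a fixed edge order, late edges may have no colour available at both endpoints, and your ``bad edges are negligible'' bookkeeping does not by itself repair either problem.)

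The paper's proof is built precisely to circumvent this lack of concentration: regularity of every $H_c$ is enforced \emph{deterministically} by the structure of the process, not by concentration. It runs in rounds with an active vertex and active colour, performs \emph{exceptional steps} that force any vertex which has become $c$-sparse to receive a colour-$c$ edge every round until it catches up, caps colour degrees via $c$-fullness, and \emph{cleans} any vertex that becomes unsafe by colouring all its remaining edges at once using a near-balanced random perfect matching in an auxiliary bipartite graph (Lemma \ref{lem:spreadmatch}), which hands each available colour $|X_v|/|C_v|\pm1$ edges at that vertex. Concentration arguments (couplings to binomials, supermartingale predictors with Azuma) are used only to show that few vertices ever become atypical/sparse/full/blocked, i.e.\ that the algorithm rarely aborts, and the spreadness is obtained by a monitor argument over standard, exceptional and cleaning steps. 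Your spreadness sketch is in the right spirit for the standard steps, but without a mechanism that repairs colour deficits (exceptional steps and balanced cleaning, or some substitute), the claimed $(1\pm2\dD)d$-regularity --- and hence both parts of the lemma --- does not follow from your process.
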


The remainder of this section constitutes
the proof of \cref{thm:main+} assuming \cref{lem:rough}.

Let $1/K \ll 1/d  \ll \dD' \ll \eta \ll 1/C  \ll \dD  \ll 1$ and $r \in [n]$.

Suppose $H$ is a sample from a $Cr/n$-spread distribution on
$r$-regular $(\dD,\dD',r/n)$-dense subgraphs of $K_{n,n}$.
We denote the parts of $K_{n,n}$ by $(U_1,U_2)$.

We need to give a $K/n$-spread probability distribution 
on $1$-factorisations of $H \vert G$, for some good event $G$.
We can assume $n \ge K$ and $r \ge K/C$, 
by considering the $\min\{Cr/n,1\}$-spread distribution
outputting an arbitrary $1$-factorisation of $H$
(which exists as $H$ is a regular bipartite graph).

We choose $d$ above (not necessarily an integer) 
so that $m := r/d \in \mb{N}$ is of the form 
$m = 2^{\ell}-1$ with $\ell \in \mb{N}$.

Our good event $G$ will be contained in
the event $G_0$ that $H$ is $(3\dD',\eta r/n)$-sparse,
and so $H \vert G_0$ is $(\dD,\dD',\eta,r/n)$-quasirandom. 
We have $\mb{P}(G_0) > 1 - n^{-14}$
by \cref{lem:sparse} applied with
$(C,3\dD',r,\eta)$ in place of $(C,\aA,r,\gG)$,
which is valid as $\eta r \ge \eta K/C > 5$
and $\eta > C(60\dD')^{1/5}$,
using $1/K \ll \dD' \ll \eta \ll 1/C$.

We apply \cref{lem:rough}.2 to $H \vert G_0$
to obtain an event $G_1$ with $\mb{P}(G_1) > 1 - n^{-9}$
and a conditionally $2C/m$-spread probability distribution 
on $(2\dD,\dD',\eta,d/n)$-quasirandom factorisations 
$F = (H_{i,j}: i \in [\ell], j \in [2^{\ell-i}])$ 
of $H \vert G$, where $G = G_0 \cap G_1$;
here we have renamed the output of the lemma
similarly to the `edge-vortex' in \cite{KKKMO}
for convenient use in the iterative absorption algorithm.
Note that
\begin{equation} \label{eq:F}
\text{As $H$ is $Cr/n$-spread, $F$ is $2C^2 d/n$-spread.} 
\end{equation}

We write $H_i = \bigcup \{ H_{i,j}: j \in [2^{\ell-i}] \}$ 
for each $i \in [\ell]$.

\begin{lem} \label{lem:Hi}
Each $H_i$ is a sample from a $2C^2 2^{\ell-i}d/n$-spread distribution
on $(2\dD,\dD',\eta,2^{\ell-i}d/n)$-quasirandom subgraphs of $K_{n,n}$.
Moreover, for any $S_i \sub E(K_{n,n})$ for $i \in [\ell]$ we have
$\mb{P}(\bigcap_i \{S_i \sub H_i\}) < \prod_i (2C^2 2^{\ell-i}d/n)^{|S_i|}$.
\end{lem}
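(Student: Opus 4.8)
The plan is to exploit that, for each $i \in [\ell]$, the graph $H_i$ is the edge-disjoint union of the $2^{\ell-i}$ graphs $H_{i,j}$, $j \in [2^{\ell-i}]$, each of which is $(2\dD,\dD',\eta,d/n)$-quasirandom by our choice of $F$ from \cref{lem:rough}.2. The quasirandomness of $H_i$ is then obtained by adding the contributions of the pieces, and every spreadness statement is read off from \eqref{eq:F}.

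For the quasirandomness of $H_i$, fix $i$ and set $N = 2^{\ell-i}$; recall that $(E(H_{i,j}))_{j \in [N]}$ partitions $E(H_i)$. First I would sum degrees: each vertex degree of $H_i$ is a sum of $N$ degrees each lying in $(1\pm2\dD)d$, hence lies in $(1\pm2\dD)Nd = (1\pm2\dD)(Nd/n)n$, which is the required regularity. Next, for $V_1,V_2$ with $|V_i| \ge \dD' n$ we have $|H_i[V_1,V_2]| = \sum_j |H_{i,j}[V_1,V_2]| \ge (1-2\dD)(Nd/n)|V_1||V_2|$, giving $(2\dD,\dD',Nd/n)$-density. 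Finally, for $|V| \le 3\dD' n$ we have $|H_i[V]| = \sum_j |H_{i,j}[V]| \le |V| \eta (Nd/n) n$, giving $(3\dD',\eta Nd/n)$-sparsity. Thus $H_i$ is $(2\dD,\dD',\eta,Nd/n)$-quasirandom: only the density parameter is rescaled by $N$, while the ``shape'' parameters $\dD,\dD',\eta$ are inherited unchanged from the pieces.

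For the spreadness I would argue directly from \eqref{eq:F}, which asserts that $\mb{P}(\bigcap_{i,j}\{S_{i,j}\sub H_{i,j}\}) \le (2C^2 d/n)^{\sum_{i,j}|S_{i,j}|}$ for every family with $S_{i,j} \sub E(K_{n,n})$. Given $S_i \sub E(K_{n,n})$ for $i \in [\ell]$, the event $\bigcap_i\{S_i \sub H_i\}$ holds exactly when there exist functions $\phi_i\colon S_i \to [2^{\ell-i}]$ with $e \in H_{i,\phi_i(e)}$ for every $e \in S_i$, since each edge of $H$ lies in a unique piece $H_{i,j}$. Taking a union bound over the $\prod_i (2^{\ell-i})^{|S_i|}$ choices of $(\phi_i)_{i}$ and applying \eqref{eq:F} to each choice with $S_{i,j} = \phi_i^{-1}(j)$ and all remaining index sets empty, every term is at most $(2C^2 d/n)^{\sum_i |S_i|}$, so
\[ \mb{P}\brac{\bigcap_i \{S_i \sub H_i\}} \le \prod_i \brac{2C^2 2^{\ell-i} d/n}^{|S_i|}, \]
which is the ``moreover'' statement; setting $S_{i'} = \es$ for all $i' \ne i$ and $S_i = S$ shows that $H_i$ is $2C^2 2^{\ell-i} d/n$-spread, as claimed.

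I do not expect a genuine obstacle here. The only points needing care are that for fixed $i$ the pieces $H_{i,j}$ are edge-disjoint — which legitimises both the term-by-term addition of the quasirandomness parameters and the rewriting of $\bigcap_i\{S_i\sub H_i\}$ through the maps $\phi_i$ — and that \eqref{eq:F} is invoked with the correct index set for each $i$ and the empty set elsewhere. The union-bound step literally delivers ``$\le$'', which is exactly the form used later; the strict inequality in the statement is a harmless matter, as the union is in fact over disjoint events.
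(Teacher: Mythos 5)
Your proof is correct and follows essentially the same route as the paper: quasirandomness of $H_i$ by summing regularity, density and sparsity over the edge-disjoint pieces $H_{i,j}$, and the spreadness bound by decomposing $\bigcap_i\{S_i \sub H_i\}$ according to which piece contains each edge (your maps $\phi_i$ are exactly the paper's partitions $P^i$) and applying \eqref{eq:F} with a union bound over the $\prod_i (2^{\ell-i})^{|S_i|}$ assignments.
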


\begin{proof}
Firstly, each $H_{i,j}$ is $(1 \pm 2\dD)d$-regular,
so each $H_i$ is $(1 \pm 2\dD)2^{\ell-i}d$-regular.
Next consider any $V_i \sub U_i$ for $i=1,2$.
If $|V_1|,|V_2| \ge \dD' n$ then
each $|H_{i,j}[V_1,V_2]| \ge (1-\dD)|V_1||V_2|d/n$,
so each $|H_i[V_1,V_2]| \ge (1-\dD)|V_1||V_2|2^{\ell-i}d/n$.
If $\dD' n>|V_1|>|V_2|/2$ then
each $|H_{i,j}[V_1,V_2]| \le \eta d|V_1|$,
so each $|H_i[V_1,V_2]| \le \eta 2^{\ell-i}d|V_1|$,
and similarly swapping subscripts $1$ and $2$.

It remains to prove the `moreover' statement
(which also implies the spreadness of each $H_i$). 
We partition each event $\{S_i \sub H_i\}$ into $(2^{\ell-i})^{|S_i|}$
events $\mc{A}_{P^i} = \bigcap_j \{ S_{i,j} \sub H_{i,j} \}$
for all partitions $P^i = (S_{i,j}: j \in [2^{\ell-i}])$ of $S_i$.
For $P=(P^1,\dots,P^\ell)$ let $\mc{A}_P = \bigcap_i \mc{A}_{P^i}$.
By spreadness of $F=(H_{i,j})$, see \eqref{eq:F}, we have
$\mb{P}(\mc{A}_P) \le (2C^2 d/n)^{\sum_{i,j} |S_{i,j}|}$.
Summing over all $P$ gives the required bound
on $\mb{P}(\bigcap_i \{S_i \sub H_i\})$.
\end{proof}

Now we are ready to describe the Iterative Absorption Algorithm,
which constructs a factorisation of $H$ into regular subgraphs
$\{ R_{i,j}: i \in [\ell], j \in [2^{\ell-i}] \}$.

\medskip

\nib{Iterative Absorption Algorithm}

\medskip

Step $i.0$: At the start of each step $i \in [\ell-1]$
we are given $R_i = \bigcup_j R_{i,j}$, where $R_1:=\es$, 
with leftover $L_i := H_i \sm R_i$. 

Step $i.1$: We abort unless we have the good event $G'_i$ that
$L_i$ is $(1 \pm 2.1\dD) 2^{\ell-i-1} d_i$-regular 
for some $d_i \in (1 \pm 40\dD) 2d$
and satisfies the good event $G_i$ for \cref{lem:rough}.1,
applied with $(d_i,2^{\ell-i-1},2C^2,2\dD)$ in place of $(d,m,C,\dD)$,
where we bound the spreadness of $L_i$ by that of $H_i$, using \cref{lem:Hi}.
Apply \cref{lem:rough}.1 to obtain
a conditionally $2C^2 / 2^{\ell-i-1}$-spread probability distribution 
on $(1 \pm 4.2\dD)d_i$-regular factorisations
$(L_{i,j}: j \in [2^{\ell-i-1}])$ of $L_i \vert G'_i$.

Step $i.2$: We apply \cref{lem:reg} 
with $(H_{i+1,j},L_{i,j},9\dD,d/n,d_i/d)$
in place of $(H,L,\dD,p,x)$
to find spanning regular subgraphs $R_{i+1,j}$ for $j \in [2^{\ell-i-1}]$
with $L_{i,j} \sub R_{i+1,j} \sub L_{i,j} \cup H_{i+1,j}$ 
and $\DD(R_{i+1,j} \cap H_{i+1,j}) \le 38\dD d$. 
If $i<\ell-2$ we now go to step $i+1$.
If $i=\ell-2$ we let 
$R_{\ell,1} = H \sm \bigcup_{i \in [\ell-1]} R_i$ 
and stop.

\begin{lem}
The algorithm aborts with probability $<n^{-8}$.
\end{lem}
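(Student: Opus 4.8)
The algorithm aborts only if some step $i.1$ fails, which happens if either $L_i$ is not $(1 \pm 2.1\dD) 2^{\ell-i-1} d_i$-regular for a suitable $d_i \in (1 \pm 40\dD)2d$, or the good event $G_i$ of \cref{lem:rough}.1 fails. The plan is to induct on $i$, tracking at each stage the distribution of $L_i$ and establishing two things: first, that conditioned on not having aborted so far, $L_i$ has the required regularity whp; and second, that the spreadness of $L_i$ is controlled (via \cref{lem:Hi}) well enough to apply \cref{lem:rough}.1, whose good event $G_i$ then fails with probability at most $(2^{\ell-i-1})^{-9} \le 1$ — more precisely we want to sum all these failure probabilities over $i \in [\ell-1]$ to something below $n^{-8}$. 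The base case $i=1$ has $R_1 = \es$, so $L_1 = H_1$, which by \cref{lem:Hi} is $(1 \pm 2\dD)2^{\ell-1}d$-regular and $2C^2 2^{\ell-1}d/n$-spread, comfortably satisfying the hypotheses with $d_1 = 2d$.

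The inductive step is where the work lies. Given $L_i$ with the claimed properties, step $i.1$ produces a $(1 \pm 4.2\dD)d_i$-regular factorisation $(L_{i,j})$, and step $i.2$ uses \cref{lem:reg} to build $R_{i+1,j} \supseteq L_{i,j}$ with $\DD(R_{i+1,j} \cap H_{i+1,j}) \le 38\dD d$. Then $L_{i+1} = H_{i+1} \sm R_{i+1} = \bigcup_j (H_{i+1,j} \sm R_{i+1,j})$. Each $H_{i+1,j}$ is $(1 \pm 2\dD)d$-regular, and we remove from it the edges of $R_{i+1,j} \cap H_{i+1,j}$, which at each vertex number at most $38\dD d$ by the degree bound from \cref{lem:reg}. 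Hence each $L_{i+1,j}$ has all degrees in $(1 \pm 40\dD)d$ — call a typical such value $d_{i+1}$; making this precise requires checking that the degrees are in fact concentrated near a common value $d_{i+1} \in (1 \pm 40\dD)2d$ summed appropriately, which follows because $R_{i+1,j}$ is itself spanning regular of degree roughly $d_i \cdot d/d$-ish adjusted, so $H_{i+1,j} \sm R_{i+1,j}$ is regular up to the $\DD(R_{i+1,j}\cap H_{i+1,j})$ error. Summing over the $2^{\ell-i-2}$ indices $j$ contributing to $H_{i+1} = \bigcup_j H_{i+1,j}$, we get $L_{i+1}$ is $(1 \pm 2.1\dD)2^{\ell-i-2}d_{i+1}$-regular, as needed. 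For spreadness, we bound the spreadness of $L_{i+1}$ by that of $H_{i+1}$, which by \cref{lem:Hi} is $2C^2 2^{\ell-i-1}d/n = 2C^2 2^{\ell-(i+1)}d/n$-spread, matching the hypothesis of \cref{lem:rough}.1 applied with $m = 2^{\ell-i-2}$ and $C$ replaced by $2C^2$ (absorbing constants into the hierarchy $1/C \ll \dD$).

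With the induction in hand, the good event $G'_i$ of step $i.1$ has $\mb{P}(G'_i \mid \text{no prior abort}) > 1 - n^{-9}$ from \cref{lem:rough}.1 (together with the deterministic regularity just established, which contributes no failure), and there are at most $\ell \le \log_2 n$ steps, so by a union bound the total abort probability is at most $\ell \cdot n^{-9} < n^{-8}$ for $n$ large. The main obstacle I expect is the bookkeeping in the regularity step: one must verify that the degrees of $L_{i+1}$ concentrate around a single value $d_{i+1}$ (not merely lie in an interval), using that $R_{i+1,j}$ is spanning regular, and that the accumulated error constants ($2\dD \to 2.1\dD \to 4.2\dD \to 40\dD \to 2.1\dD$ at the next scale) genuinely close up rather than drifting; this is the kind of delicate constant-chasing the paper flags in its introduction, but it is routine given the $\ll$ hierarchy and the explicit degree bound $\DD(R_{i+1,j} \cap H_{i+1,j}) \le 38\dD d$ supplied by \cref{lem:reg}.
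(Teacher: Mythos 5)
Your high-level plan coincides with the paper's: the only randomness in aborting comes from the good events supplied by \cref{lem:rough}.1, which union-bound to $<n^{-8}$ over the $\ell \le \log_2 n$ steps, while the regularity of $L_i$ demanded in step $i.1$ must be established deterministically by induction. The gap is in that inductive regularity step, and it is precisely the main content of the paper's proof rather than routine constant-chasing. You control the degrees of each piece $H_{i+1,j}\sm R_{i+1,j}$ only through the bound $\DD(R_{i+1,j}\cap H_{i+1,j})\le 38\dD d$, which places the degrees of each piece in an interval of relative width about $40\dD$ around $d$; summing over the $2^{\ell-i-1}$ indices $j$ (not $2^{\ell-i-2}$) then yields only $(1\pm O(40\dD))$-relative regularity of $L_{i+1}$, far from the required $(1\pm 2.1\dD)2^{\ell-i-2}d_{i+1}$-regularity. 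The wide window $(1\pm 40\dD)2d$ is permitted only for the \emph{location} $d_{i+1}$, whereas the \emph{spread} around $d_{i+1}$ must remain $2.1\dD$ at every level, since $G'_{i+1}$ (and the application of \cref{lem:rough} with the fixed parameter $2\dD$) demands it. If instead you let the window grow as your per-level bound forces ($2\dD \to 40\dD \to \dots$, with \cref{lem:reg} then returning a correspondingly worse maximum-degree bound), the constants multiply at each of the $\Theta(\log n)$ levels and quickly exceed $1$, so the induction collapses: the errors do not ``close up''.

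The missing idea is a global cancellation, not a per-piece estimate. Since $H$ is exactly $r$-regular and every $R_{i',j}$ is exactly regular and spanning, and $L_{i+1}=H\sm\brac{\bigcup_{i'\le i+1}R_{i'}\cup\bigcup_{i'>i+1}H_{i'}}$ is a disjoint decomposition, the degree of $L_{i+1}$ at $v$ equals a constant minus $\sum_{i'>i+1}|H_{i'}(v)|$; hence any two degrees of $L_{i+1}$ differ by at most $\sum_{i'>i+1}2^{\ell-i'}\cdot O(\dD d)=O(\dD)\,2^{\ell-i-1}d$, a band whose relative width is $O(\dD)$ uniformly in the level. (Equivalently, in the exact per-piece identity $|(H_{i+1,j}\sm R_{i+1,j})(v)|=|H_{i+1,j}(v)|+|L_{i,j}(v)|-\deg(R_{i+1,j})$ the fluctuations must be tracked with their signs; using only the maximum-degree bound on $R_{i+1,j}\cap H_{i+1,j}$ discards exactly this cancellation, and even the signed recursion, if one only adds up variations level by level, loses it.) The bound $\DD(R_{i+1,j}\cap H_{i+1,j})\le 38\dD d$ from \cref{lem:reg} is then used only to \emph{locate} the narrow band, via the minimum degree $D_{i+1}$ of $L_{i+1}$: one sets $d_{i+1}:=D_{i+1}/2^{\ell-i-2}\in(1\pm 40\dD)2d$ and checks that the whole band lies in $(1\pm 2.1\dD)2^{\ell-i-2}d_{i+1}$. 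Without separating the width of the band from the location of the band in this way, your induction does not close.
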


\begin{proof}
Assuming \cref{lem:rough},
the event $\bigcap_i G_i$ has failure probability $<n^{-8}$.
It only remains to justify the regularity of $L_i$ in step $i.1$,
which we will show holds deterministically given that 
\cref{lem:rough} could be applied at previous steps.

For $i=1$ this holds by \cref{lem:Hi} as $L_1 = H_1$ 
is $(2\dD,\dD',\eta,2^{\ell-1}d/n)$-quasirandom.

Now suppose inductively we have $L_{i-1}$ for some $i \ge 2$ 
that is $(1 \pm 2.1\dD) 2^{\ell-i} d_{i-1}$-regular 
for some $d_{i-1} \in (1 \pm 40\dD) 2d$. 

In step $(i-1).1$, 
applying \cref{lem:rough} gives a distribution on
$(1 \pm 4.2\dD)d_{i-1}$-regular factorisations
$(L_{i-1,j}: j \in [2^{\ell-i}])$ of $L_{i-1} \vert G'_{i-1}$.

In step $(i-1).2$, applying \cref{lem:reg} gives 
spanning regular subgraphs $R_{i,j}$ for $j \in [2^{\ell-i}]$
with $L_{i-1,j} \sub R_{i,j} \sub L_{i-1,j} \cup H_{i,j}$ 
and $\DD(R_{i,j} \cap H_{i,j}) < 38\dD d$. 

Now note that $L_i$ is obtained from $H$ by deleting 
all $R_{i',j}$ with $i' \le i$ and all $H_{i',j}$ with $i'>i$,
where the $R_{i',j}$ are regular 
and each $H_{i',j}$ is $(1 \pm 2\dD)d$-regular.
Thus all degrees in $L_i$ differ by at most 
$\sum_{i'>i} 2^{\ell-i'} 2\dD d < 2^{\ell-i+1} \dD d$.

Furthermore $L_i = H_i \sm R_i$ has minimum degree $D_i$
with $D_i/2^{\ell-i} \in [(1-2\dD)d - 38 \dD d, (1+2\dD)d]$.
Let $d_i := D_i/2^{\ell-i-1} \in (1 \pm 40\dD) 2d$.
Then $L_i$ is $(1 \pm 2.1\dD) 2^{\ell-i-1} d_i$-regular, as $\dD \ll 1$. 
This completes the induction, so the lemma follows.
\end{proof}

The following lemma will complete the proof 
of \cref{thm:main+} assuming \cref{lem:rough}.

\begin{lem}
The algorithm conditioned on not aborting
outputs a $K/n$-spread probability distribution 
on regular factorisations of $H$.
\end{lem}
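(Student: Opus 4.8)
The plan is to track the spreadness of the output factorisation through the iterative absorption algorithm, using the conditional spreadness lemmas (\cref{lem:iterspread} in particular) to multiply the contributions from each level. The output is the factorisation $\{R_{i,j}: i \in [\ell], j \in [2^{\ell-i}]\}$ of $H$, where $R_{\ell,1} = H \sm \bigcup_{i<\ell} R_i$ is the final leftover. I would first set up the right conditioning: the algorithm proceeds level by level, at each level $i$ revealing the factorisation $(L_{i,j})$ of the leftover $L_i$ via \cref{lem:rough}.1, and then using the \emph{deterministic} procedure of \cref{lem:reg} to absorb $L_{i,j}$ into $R_{i+1,j} \sub L_{i,j} \cup H_{i+1,j}$. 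The key observation is that for a set $S$ of edges to lie in some $R_{i+1,j}$, each edge of $S$ must lie either in $L_{i,j}$ (hence in the level-$i$ leftover $L_i$, hence ultimately traceable to $H_i$) or in $H_{i+1,j}$; so I would split each target set $S_{i,j}$ according to this dichotomy and bound the two parts separately — one part controlled by the spreadness of $F = (H_{i,j})$ from \eqref{eq:F}, the other by the conditional spreadness of the leftover factorisations coming from \cref{lem:rough}.1.

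More concretely, the main steps are: (i) fix arbitrary target sets $S_{i,j} \sub E(K_{n,n})$ and write $q^* = \sum_{i,j}|S_{i,j}|$; I want $\mb{P}(\bigcap \{S_{i,j} \sub R_{i,j}\} \mid \text{no abort}) \le (K/n)^{q^*}$. (ii) Using that conditioning on the no-abort event $\bigcap_i G_i$ (of probability $>1-n^{-8} > 1/2$) only costs a factor of $2$ in spreadness per the simple observation in Section 2.2, reduce to bounding the unconditioned probability by $(K/2n)^{q^*}$, absorbing the loss into the hierarchy $1/K \ll \dots$. (iii) Run the argument of \cref{lem:iterspread} inductively over levels $i = 1, \dots, \ell-1$: at level $i$, the factorisation $(L_{i,j})$ of $L_i$ is conditionally $2C^2/2^{\ell-i-1}$-spread given $L_i$ (from \cref{lem:rough}.1), and $L_i \sub H_i$ which is itself $2C^2 2^{\ell-i}d/n$-spread with the product bound from \cref{lem:Hi}; since $R_{i,j}$ is obtained from $L_{i-1,j}$ by a deterministic completion using edges of $H_{i,j}$, I would bound $\mb{P}(S_{i,j} \sub R_{i,j})$ by summing over partitions $S_{i,j} = A \cup B$ with $A$ forced into $L_{i-1,j}$ and $B$ into $H_{i,j}$, getting a factor $\le (2C^2/2^{\ell-i}) \cdot (2C^2 2^{\ell-i}d/n) + (2C^2 d/n) = O(C^4 d/n)$ per edge after the geometric cancellation of the $2^{\ell-i}$ factors (the factor $2^{\ell-i}$ from the spreadness of $H_i$ cancels against the $1/2^{\ell-i}$ from the conditional spreadness of the leftover factorisation). (iv) Handle the final piece $R_{\ell,1}$ separately: it is contained in $H = \bigcup_i H_i$, and membership of an edge in $R_{\ell,1}$ forces it into exactly one $H_i$, so its contribution is bounded via the product bound in \cref{lem:Hi} by $\sum_i 2C^2 2^{\ell-i}d/n \le 2C^2 m d/n = 2C^2 r/n \le C/n$ per edge (recall $r \le n$, $1/C \ll 1$), which is fine.

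The bookkeeping device I would use to organise step (iii) cleanly is to apply \cref{lem:iterspread} with $\ell-1$ levels: treat the factorisation $(H_i: i \in [\ell])$ of $H$ as the base (conditionally spread with the mixed-radix bound of \cref{lem:Hi}, which is exactly the hypothesis format), and at each level peel off the absorption step as a conditionally spread refinement. The one subtlety is that the $R_{i,j}$ are not a refinement of the $H_i$ in the naive sense — $R_{i+1,j}$ borrows edges from $H_{i+1}$, not from $H_i$ — so I would instead set up the induction on the pair $(R_i \text{ so far}, L_i)$ and argue that the joint distribution of $(L_{i,1}, \dots, L_{i,2^{\ell-i-1}})$ together with its deterministic completions $(R_{i+1,j})$ is conditionally spread given everything revealed before level $i$, with spread constant $O(C^4 d/n)$, and then chain these over $i$; the product of $\ell-1 \le \log_2 n$ such constants times the base contribution stays below $K/n$ because $d = O(1)$ and $1/K \ll 1/d, 1/C$. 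The main obstacle, and where I expect the real work to be, is precisely making this chaining rigorous: the leftovers $L_i$ are random and their spreadness is inherited from $H_i$ only through the bound in \cref{lem:Hi}, so I must be careful that the conditioning in \cref{lem:rough}.1 (which conditions on $S \sub L_i$, not $S \sub H_i$) composes correctly with the conditioning on $S \sub H_i$ — this is exactly what the ``conditionally spread'' framework of Section 2.2 is designed to handle, and verifying that each step fits that framework (in particular the conditional independence required by \cref{lem:iterspread}, which holds because the level-$i$ randomness is drawn fresh given the history) is the crux.
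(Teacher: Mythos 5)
Your proposal is correct and follows essentially the same route as the paper: you split each $S_{i,j}$ into the part landing in $L_{i-1,j}$ and the part absorbed from $H_{i,j}$, bound the former by composing the conditional spreadness of the leftover factorisation (from \cref{lem:rough}.1) with the spreadness of $H_{i-1}$ so that the $2^{\ell-i}$ factors cancel, bound the latter by the spreadness of $F$ via \eqref{eq:F} and \cref{lem:Hi}, and chain over levels before summing over the partitions. The paper organises the same chaining slightly differently (an explicit partition into events $\mc{A}_{S,T}$ and conditioning on the accumulated history $\mc{E}^i_{S,T}$, with the final piece $R_{\ell,1}$ handled uniformly rather than separately), but these are bookkeeping variants of the same argument.
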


\begin{proof}
By construction, if the algorithm does not abort then it
constructs a factorisation of $H$ into regular subgraphs
$\{ R_{i,j}: i \in [\ell], j \in [2^{\ell-i}] \}$.
For spreadness, consider any sets of edges
$S = ( S_{i,j}: i \in [\ell], j \in [2^{\ell-i}] )$.
Let $\mc{A}_S$ be the event that the algorithm does not abort
and all $S_{i,j} \sub R_{i,j}$.
It suffices to show $\mb{P}(\mc{A}_S) \le (K/2n)^{\sum_{i,j} |S_{i,j}|}$.
As $R_1=\es$ we can assume all $S_{1,j}=\es$.

We partition $\mc{A}_S$ into events $\mc{A}_{S,T}$
for all $T = ( T_{i,j}: i \in [\ell], j \in [2^{\ell-i}] )$
with each $T_{i,j} \sub S_{i,j}$,
where $\mc{A}_{S,T}$ is the event that all
$T_{i,j} \sub R_{i,j} \cap H_{i,j}$
and $S_{i,j} \sm T_{i,j} \sub L_{i-1,j}$
(setting $L_{0,j}=\es$ for the trivial case $i=1$).

We write $\mc{A}_{S,T} = \mc{A}'_{S,T} \cap \bigcap_i \mc{A}^i_{S,T}$,
where $\mc{A}'_{S,T}$ is the event that all
$T_{i,j} \sub R_{i,j} \cap H_{i,j}$
and $S_{i,j} \sm T_{i,j} \sub H_{i-1}$,
and each $\mc{A}^i_{S,T}$ is the event that 
$S_{i,j} \sm T_{i,j} \sub L_{i-1,j}$ for all $j$.

By spreadness of $F=(H_{i,j})$, see \eqref{eq:F}, we have
$\mb{P}(\mc{A}'_{S,T}) < (2C^2 d/n)^{\sum_{i,j} |T_{i,j}|} 
 \prod_i (2^{\ell-i+1} 2C^2 d/n)^{|S_{i,j} \sm T_{i,j}|}$.
 
For each $i \in [\ell-1]$, writing
$\mc{E}^i_{S,T} = \mc{A}'_{S,T} \cap \bigcap_{i'<i} \mc{A}^{i'}_{S,T}$,
by conditional spreadness of $(L_{i,j})$ in Step i.1, we have
$\mb{P}(\mc{A}^i_{S,T} \mid \mc{E}^i_{S,T}) 
\le \mb{P}(\mc{A}^i_{S,T} \mid \bigcap_j \{ S_{i,j} \sm T_{i,j} \sub L_{i-1} \} ) 
\le (2C^2 / 2^{\ell-i})^{\sum_j |S_{i,j} \sm T_{i,j}|} $.
Thus
\begin{align*} 
& \mb{P}(\mc{A}_{S,T}) 
= \mb{P}(\mc{A}'_{S,T}) \prod_i \mb{P}(\mc{A}^i_{S,T} \mid \mc{E}^i_{S,T}) \\
& \le (2C^2 d/n)^{\sum_{i,j} |T_{i,j}|} 
 \prod_i (2^{\ell-i+1} 2C^2 d/n)^{|S_{i,j} \sm T_{i,j}|}
 \prod_i (2C^2 / 2^{\ell-i})^{\sum_j |S_{i,j} \sm T_{i,j}|} \\
& \le (100C^4 d/n)^{\sum_{i,j} |S_{i,j}|}.
\end{align*}

Summing over $T$, the lemma follows for $K > C^5 d$, say. 
\end{proof}

\section{Quasirandom factorisations}

In this section we prove \cref{lem:rough},
which completes the proof of our main theorem.
Let 
\[1/K \ll 1/d \ll \tT \ll \dD' \ll \eta \ll 1/C \ll \eps \ll \dD \ll 1/L \ll 1.\]

Suppose $m,n \in \mb{N}$ with $n \ge K$.
Let $H$ be sampled from a $Cdm/n$-spread distribution $\mc{D}$
on $(1 \pm \dD)dm$-regular subgraphs of $K_{n,n}$.
We will prove \cref{lem:rough} via a random greedy algorithm
described in the next subsection, with the following two properties
(we now rename $F_c$ as $H_c$).

\medskip

1. There is a good event $G$ for $H$ with $\mb{P}(G) > 1-n^{-9}$,
such that conditional on the algorithm not aborting, the output is
a conditionally $C/m$-spread probability distribution $\mc{D}'$
on $(1 \pm 2\dD)d$-regular factorisations $(H_1,\dots,H_m)$ of $H \vert G$.

\medskip

2. If $\mc{D}$ is supported on
$(\dD,\dD',dm/n)$-dense subgraphs of $K_{n,n}$
then $\mc{D}'$ is supported on
$(2\dD,\dD',\eta,d/n)$-quasirandom factorisations $(H_1,\dots,H_m)$ of $H$.

\medskip

First we give some reductions that allow
us to assume that the number of colours $m$ is quite large 
and the degree $dm$ is small compared with $n$.

\begin{lem} \label{lem:reduce}
\cref{lem:rough} follows from itself
assuming $dm < 2\log^4 n$ and $m \ge \sqrt{d}$ and replacing 
$(C,2\dD,n^{-9})$ by $(.1C,1.9\dD,n^{-10})$ in its conclusion.
\end{lem}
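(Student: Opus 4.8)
The plan is to bootstrap from the general statement of \cref{lem:rough} to the special case where $dm$ is polylogarithmic and $m$ is not too small, by splitting off colour classes in stages. Suppose we are given the version of \cref{lem:rough} under the extra hypotheses $dm < 2\log^4 n$, $m \ge \sqrt{d}$, with the weaker conclusion (constant $.1C$, regularity slack $1.9\dD$, failure probability $n^{-10}$). We want to deduce the full statement for arbitrary $m,n$ with $n \ge K$ and arbitrary $d$ in the allowed hierarchy. Set $r := dm$, the common degree (up to the $(1\pm\dD)$ slack). If already $r < 2\log^4 n$ and $m \ge \sqrt d$ we are essentially done — we just need to absorb the constant changes, which is harmless since $.1C$ and $1.9\dD$ still satisfy the hierarchy, and $n^{-10} < n^{-9}$; so the content is the reduction when $r$ is large or $m$ is small.

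First I would handle the case $r \ge 2\log^4 n$. Choose $m_0 = 2^{s}$ with $2^{s} \approx r/\log^3 n$ (so that $d_0 := r/m_0 \approx \log^3 n$ is polylogarithmic and $m_0 \ge \sqrt{d_0}$, which holds comfortably since $m_0$ is essentially $r/\log^3 n$ and is much bigger than $\log^{3/2} n$ once $r$ is at all large; if $r/\log^3 n < 1$ we are already in the polylog regime). Apply the \emph{already-assumed} form of \cref{lem:rough} in its polylog regime — actually this is circular unless I iterate: the cleanest route is a \emph{binary-splitting recursion}. At each stage, given a $C'r'/n$-spread distribution on $(1\pm\dD')r'$-regular graphs, split into two colour classes each $(1\pm 2\dD')r'/2$-regular using \cref{lem:rough} with $m=2$ — but $m=2$ violates $m \ge \sqrt d$. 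So instead split into $m_0 = \min\{2^{\lceil \log_2 \sqrt{r'}\rceil}, r'/\log^4 n\}$ classes at once; this keeps $m_0 \ge \sqrt{d_0}$ (take $m_0 \approx \sqrt{r'}$ when $r' < \log^8 n$, and $m_0 \approx r'/\log^4 n$ when $r' \ge \log^8 n$, checking $r'/\log^4 n \ge \sqrt{r'/\log^4 n}$ iff $r'\ge \log^8 n$) and keeps $d_0 = r'/m_0 < 2\log^4 n$. Iterating $O(\log\log n)$ times (each step roughly square-roots the degree once we are below $\log^8 n$, or reduces it by a $\log^4 n$ factor above) brings the degree down to $O(1)$, at which point a final direct application with the target $m$ finishes; the final $m$ may itself be small, and when $m < \sqrt d$ we first write $m \mid m'$ for a suitable $m' \ge \sqrt d$, apply the lemma to get an $m'$-factorisation, then \emph{merge} colour classes in blocks of $m'/m$ — merging preserves spreadness (union bound over which block an edge lands in, as in the proofs of \cref{lem:Hi} and the final lemma of Section 3) and only worsens regularity additively.

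The spreadness and regularity bookkeeping is routine via \cref{lem:iterspread}: each splitting stage composes a conditionally $(\text{const}/m_i)$-spread factorisation, and the product of the per-stage constants over $O(\log\log n)$ stages stays bounded (this is where the slack between $C$ and $.1C$ is spent — we need the assumed constant to be a factor $10$ better so that the $O(\log\log n)$-fold composition still lands under $C$; similarly $1.9\dD$ vs $2\dD$ absorbs the geometric series of regularity losses $\sum 2\cdot 2^{-i}\dD$-type terms, and the failure probabilities $n^{-10}$ sum over $O(\log\log n)$ stages to $< n^{-9}$). For part 2, density/quasirandomness is inherited at each stage exactly as in \cref{lem:Hi}: a $(1-\dD)$-density lower bound on each piece follows from that of the parent (the parent's edges in $[V_1,V_2]$ are split among $m_i$ pieces each getting a $1/m_i$ fraction in the uniform-ish sense guaranteed by the lemma's output), and the sparsity/maximum-degree upper bound is immediate since small sets span few edges; the constants drift from $\dD$ to $2\dD$ over the whole recursion, again absorbed by starting at $1.9\dD$.

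The main obstacle I anticipate is controlling the multiplicative constant through the recursion: naively each of the $\Theta(\log\log n)$ stages multiplies the spread constant by an absolute factor, which would blow up. The resolution is to arrange the recursion so that the number of stages is actually \emph{bounded} (not $\log\log n$) — e.g.\ by allowing $d_0$ up to $2\log^4 n$ at each stage, the degree drops from $r$ to $\le 2\log^4 n$ in a \emph{single} stage (take $m_0 = \lceil r/(2\log^4 n)\rceil$ rounded to a power of $2$), then from $2\log^4 n$ to $O(1)$ in one more stage (take $m_0 = 2\log^4 n / d_{\text{target}}$), so only $O(1)$ stages are needed and only an $O(1)$-power of the assumed constant appears; that is exactly what the factor-$10$ improvement ($C \to .1C$) is designed to pay for. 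Checking that at each of these finitely many stages the constraints $m_0 \ge \sqrt{d_0}$ and $d_0 m_0 = r'$ and $d_0 < 2\log^4 n$ are simultaneously satisfiable for the given $r'$ is the one genuinely fiddly point, handled by the case split on whether $r' \lessgtr \log^8 n$ as above.
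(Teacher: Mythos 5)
There is a genuine gap, and it comes from a misreading of the restricted hypothesis. In \cref{lem:rough} the input graph $H$ is $(1\pm\dD)dm$-regular, so the assumption ``$dm < 2\log^4 n$'' in \cref{lem:reduce} bounds the degree of the graph \emph{being factorised}, not the per-colour degree $d$. Your recursion performs the coarse degree-reduction splits by invoking the restricted lemma on graphs of degree $r' \ge 2\log^4 n$: splitting such a graph into $m_0$ classes of degree $d_0$ means applying the lemma with $d_0 m_0 = r' \ge 2\log^4 n$, which is exactly the regime the restricted lemma excludes. The condition you verify, $d_0 < 2\log^4 n$, is not the hypothesis; the same objection applies to your proposed ``single stage'' fix with $m_0 = \lceil r/(2\log^4 n)\rceil$. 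So the plan is circular precisely in the case it is meant to handle, and no tuning of constants over $O(1)$ or $O(\log\log n)$ stages repairs it. (Your worry about constants compounding over $\log\log n$ stages is therefore moot, and your check ``$r'/\log^4 n \ge \sqrt{r'/\log^4 n}$'' verifies $m_0 \ge \sqrt{m_0}$ rather than $m_0 \ge \sqrt{d_0}$, another symptom of the same confusion.)

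The missing idea is that the coarse split needs no lemma at all. The paper groups the $m$ colours into $m'$ blocks of sizes $m_c \in \{\lfloor m/m'\rfloor,\lceil m/m'\rceil\}$ with $dm/m' \in [\log^4 n, 2\log^4 n]$, and assigns each edge of $H$ independently to a block with probability $m_c/m$. Because the expected piece degrees and the bipartite counts $|H'_c[V_1,V_2]|$ are at least of order $\log^4 n$, Chernoff gives relative error $\log^{-2} n$, so except with probability $\exp(-\Theta(\log^2 n))$ each piece is $(1\pm\dD\pm\log^{-2}n)dm_c$-regular and suitably dense; and the split is conditionally $1/m'$-spread by construction, so each piece is $Cdm_c/n$-spread. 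The restricted lemma is then applied only to these polylog-degree pieces (with the slightly enlarged regularity parameter, which is what the $1.9\dD$ versus $2\dD$ slack absorbs), and \cref{lem:iterspread} composes the conditional spreadness in a single step, the factor-$10$ slack in $C$ paying for $m_c m' \ge m/2$ and for conditioning on the good events. Your treatment of the remaining case $m < \sqrt d$ (apply the lemma with $m' = md'$ parts, $d' \in [\sqrt d, 2\sqrt d]$, then merge blocks of $d'$ parts, noting merging preserves spreadness and adds regularity errors) does match the paper's argument and is fine.
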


\begin{proof}
Suppose $dm \ge 2\log^4 n$, consider any $m' \in \mb{N}$
with $dm/m' \ge \log^4 n$ and a uniformly random factorisation
$(H'_1,\dots,H'_{m'})$ of $H$. This is conditionally $1/m'$-spread,
so each $H'_c$ is $Cdm/m'n$-spread. Also, by Chernoff
each $H'_c$ is $(1 \pm \dD \pm \log^{-2} n)dm/m'$-regular
with failure probability $\exp -\TT(\log^2 n)$. 
Furthermore, if $H$ is $(\dD,\dD',dm/n)$-dense then 
for every $V_i \sub U_i$ with $|V_i| \ge \dD' n$ for $i=1,2$ 
we have $|H[V_1,V_2]| \ge (1-\dD)|V_1||V_2|dm/n$,
so each $|H'_c[V_1,V_2]|$ is binomial with mean
$\ge (1-\dD)|V_1||V_2|dm/m'n$; then by Chernoff each
$|H'_c[V_1,V_2]| \ge (1-\dD-\log^{-2} n)|V_1||V_2|dm/m'n$ 
with failure probability $<e^{-n\log n}$, say,
by a union bound over $<n$ colours $c$
and $<4^{2n}$ choices of $V_1,V_2$.

We can suppose $d < \log^4 n$, 
otherwise choosing $m'=m$ above completes the proof.
Then we can fix $m' \le m/2$ with $dm/m' \in [\log^4 n,2\log^4 n]$.
We fix $m_c \in \{ \bfl{m/m'}, \bcl{m/m'} \}$ for $c \in [m']$
with $\sum_c m_c = m$ and consider a random factorisation
$(H'_1,\dots,H'_{m'})$ where independently uniformly at random
each edge samples $t_e \in [m]$ and is included in $H'_c$
for the smallest $c$ with $\sum_{c' \le c} m_{c'} \ge t_e$.
Thus each $H'_c$ is a random subgraph of $H$ where each
edge appears independently with probability $m_c/m$. 
Similarly to above, we have events $G'_c$ 
with failure probability $\exp -\TT(\log^2 n)$
such that each $H'_c \vert G'_c$ is a sample from
a $Cdm_c/n$-spread distribution
on $(1 \pm \dD \pm \log^{-2} n)dm_c$-regular graphs,
where if $H$ is $(\dD,\dD',dm/n)$-dense 
then $H'_c$ is $(\dD + \log^{-2} n,\dD',dm_c/n)$-dense. 

Now according to the assumptions of the lemma
we can apply \cref{lem:rough} to each $H'_c \vert G'_c$,
obtaining good events $G_c$ with $\mb{P}(G_c) > 1-n^{-10}$,
and conditionally $.1C/m_c$-spread distributions
on $(1 \pm 2\dD)d$-regular factorisations 
$(H^c_1,\dots,H^c_{m_c})$ of $H_c \vert (G'_c \cap G_c)$,
which are conditionally independent given $(H'_1,\dots,H'_{m'})$,
and which if $H$ is $(\dD,\dD',dm/n)$-dense 
are $(2\dD,\dD',\eta,d/n)$-quasirandom.

Combining these and using \cref{lem:iterspread},
we obtain a conditionally $C/m$-spread distribution
on $(1 \pm 2\dD)d$-regular factorisations of $H \vert G$,
where $G = \bigcap_c (G'_c \cap G_c)$ has $\mb{P}(G) > 1-n^{-9}$,
which if $H$ is $(\dD,\dD',dm/n)$-dense
is $(2\dD,\dD',\eta,d/n)$-quasirandom.

It remains to show that we can assume $m \ge \sqrt{d}$.
Suppose $m<\sqrt{d}$ and fix an integer $d' \in [\sqrt{d},2\sqrt{d}]$.
Applying \cref{lem:rough} with $m' = md'$ gives
a conditionally $C/m'$-spread distribution
on $(1 \pm 2\dD)d/d'$-regular factorisations 
$(H'_1,\dots,H'_{m'})$ of $H \vert G$.
Then merging groups of $d'$ parts gives
a conditionally $C/m$-spread distribution
on $(1 \pm 2\dD)d$-regular factorisations 
$(H_1,\dots,H_m)$ of $H \vert G$.
Furthermore, if $(H'_1,\dots,H'_{m'})$
is $(2\dD,\dD',\eta,d/d'n)$-quasirandom
then $(H_1,\dots,H_m)$ is $(2\dD,\dD',\eta,d/n)$-quasirandom
(as in the proof of \cref{lem:Hi}).
The lemma follows.
\end{proof}

Henceforth we can assume $dm < 2\log^4 n$ and $m \ge \sqrt{d}$.

As in the proof of \cref{thm:main}, 
we will use the event $G_0$ that $H$ is $(3\dD',\eta dm/n)$-sparse,
which has $\mb{P}(G_0) > 1 - n^{-14}$.
We also use the event $G_1$ that $H$ is $(n^{-.1},6/n)$-sparse,
i.e.\ $|H[V]| \le 6|V|$ whenever $|V| \le 2n^{.9}$.
This has $\mb{P}(G_1) > 1 - n^{-14}$
by \cref{lem:sparse} applied with
$(C,r,\aA,\gG)$ replaced by $(C/6,6dm,n^{-.1},1/dm)$,
noting that $\gG > C(20\aA)^{1/5}$,
as $dm < 2\log^4 n$ and $n>K \gg d \gg C$.
Finally, we also use the event $G_2$ that
we do not have disjoint $S,T \sub V(H)$ with $|T|=20|S| < \sqrt{n}$
and distinct pairs $(s^1_t s^2_t: t \in T)$ such that
each $s^i_t \in S$ and $ts^i_t \in E(H)$.
This has $\mb{P}(G_2) > 1 - n^{-40}$ by \cref{lem:sparse2}, as $n > (Cdm)^8$.

Our good event for the proof of \cref{lem:rough} is $G = G_0 \cap G_1 \cap G_2$.

\subsection{Random Greedy Algorithm}

The input to the algorithm
is a random graph $H$ sampled from a $Cdm/n$-spread distribution
on $(1 \pm \dD)dm$-regular subgraphs of $K_{n,n}$,
assuming that $H$ satisfies the good event $G$ above.

The colouring proceeds in rounds, in each of which
an \emph{active vertex} $v^*$ ranges from $1$ to $n$.
A standard step 
(there will be occasional cleaning and exceptional steps)
in a round will colour some edge $v^* u^*$
so that in expectation each vertex occurs about
once as $v^*$ and once as $u^*$ during the round
(so there will be about $dm/2$ rounds).

We call a round $i$ \emph{early} if $i < (1-\eps) dm/2$;
otherwise we call $i$ \emph{late}.

In each round there is a fixed \emph{active colour} $c^*$,
which ranges cyclically from $1$ to $m$ in successive rounds.
At each step we will increase the partial colouring 
by colouring some new edge, except that occasionally we will 
colour all remaining edges at some particular vertex.

Throughout the algorithm, for any vertex $v$ and colour $c$
we let $\col(v)$ be the number of coloured edges at $v$, 
and let $\col_c(v)$ be the number of these with colour $c$.
\begin{itemize}
\item We say $v$ is \emph{atypical} if at some early round $i$
we have $|\col(v) - 2i| > f(i) dm$, 
where $f(i) = 3\eta (1 + \eps^{-2}/dm)^i$.
\item We say $v$ is \emph{exceptional} if $\ge \tT dm$ edges 
incident to $v$ have been coloured by an exceptional step (defined below).
\item We say $v$ is \emph{$c$-full} if 
 $\col_c(v) \ge 2i/m + \dD d$ at some early round
 or $\col_c(v) \ge (1+2\dD)d-1$ at some late round. 
\item We say $v$ is \emph{$c$-sparse} if in some early round $i$ 
we have $\col_c(v) \le 2i/m - \dD d$. 
\item We say $v$ is \emph{blocked / attacked} 
if there is some colour $c$ such that $\ge d^{.9} m$
neighbours of $v$ have ever been $c$-full / $c$-sparse. 
\item We say $v$ is \emph{blocking / attacking} 
if there are $\ge \tT^2 m$ colours $c$ 
such that $v$ has ever been $c$-full / $c$-sparse.
\item We say that $v$ is \emph{dangerous}
if we colour an edge at $v$ while $v$ is atypical or exceptional
or blocking or attacking or blocked or attacked.
\item We say that $v$ is \emph{unsafe} if it has ever been dangerous
or if we colour an edge at $v$ while $v$ has $>2\eta dm$ unsafe neighbours;
otherwise we say that $v$ is \emph{safe}.
\end{itemize}
We \emph{abort} if $\ge \tT n$ vertices are unsafe. 

At any step,
we let $G$ denote the graph where $V(G)$ is the set of safe vertices
and $E(G)$ is the set of uncoloured edges. 
Also, for any safe $v$ we let
$C_v$ be the set of colours $c \in [m]$ such that $v$ is not $c$-full
and $\le \tT |G(v)|$ neighbours in $G$ of $v$ are $c$-full.

If there are any uncoloured edges on unsafe vertices
then we \emph{clean} them sequentially,
meaning that for each unsafe $v$ in turn, ordered in a \emph{queue},
we colour all remaining uncoloured edges at $v$,
according to a random perfect matching chosen by \cref{lem:spreadmatch}
in the auxiliary balanced bipartite graph $B_v$ defined below.

To maintain the queue, after any step
(standard, cleaning or exceptional)
we consider the \emph{batch} $B$
of all vertices that became unsafe in this step.
We repeatedly increase $B$ by adding any safe $v$
with $\ge 12$ neighbours in $B$ until there are no such $v$.
We let $H_B$ be the graph on $B$ with edges consisting 
of all $uv$ with $u,v \in B$ such that $uv \in H$
or $uw,vw \in H$ for some $w$ that is safe
or in $B' := \{ w \in B: |H[B](w)| < d^{.1} \}$. 
We add $B$ to the end of the queue in a degeneracy order for $H_B$,
so that each has $\le d^{.2}$ earlier neighbours in its batch. 

For each $v \in B$, 
we let $X_v$ consist of all $u$ such that $uv$ is uncoloured.
We let $C_v$ be the set of colours $c$ such that $v$ is not $c$-full
and $\le \tT |X_v|$ many $u \in X_v$ are $c$-full.
We let $Y_v$ be a multiset with $|Y_v|=|X_v|=dm-\col(v)$ 
supported in $C_v$ with multiplicities that differ by $\le 1$.
We let $B_v$ be the bipartite graph on $(X_v,Y_v)$
where $uc$ is an edge whenever $u$ is not $c$-full.
If $|Y_v| \ge |C_v|$ we fix any $Y_v$ as above,
but if $|Y_v|<|C_v|$ we choose $Y_v \sub C_v$ 
uniformly at random and condition on the event $E_v$
that $B_v$ has minimum degree $\ge (1-\tT^{.1})|X_v|$.

For any colour $c$ we let $G_c$ be the subgraph of $G$
with all edges where both ends are not $c$-full.

Now suppose that all unsafe vertices have been cleaned.
If $v^*$ is $c^*$-sparse then 
we perform an \emph{exceptional step}: we choose a uniformly
 random $u^* \in G_{c^*}(v^*)$ and colour $u^*v^*$ by $c^*$.
Otherwise, we perform a \emph{standard step}: 
 we choose a uniformly random $u^* \in G(v^*)$
 and colour $u^*v^*$ by $c$ chosen uniformly at random
 so that $u^*,v^*$ are both not $c$-full. 

We say that a colour $c$ is bad if $\ge \tT^4 n$ vertices 
have ever been $c$-full or $c$-sparse.
We \emph{abort} if any colour is bad.

We update $(v^*,c^*)$ by incrementing $v^*$ by $1$ mod $n$,
where if we had $v^*=n$ then $v^*$ returns to $1$ and then
we also increment $c^*$ by $1$ mod $m$. We repeat the above
colouring procedure until all edges have been coloured.

When all edges have been coloured, the algorithm outputs
the factorisation $(H_1,\dots,H_m)$ of $H$
where each $H_c$ consists of all edges of colour $c$.

\subsection{Basic properties} \label{subsec:basic}

We record some basic properties
of the algorithm in this subsection,
showing in particular that if it does not abort
then it produces a $(1 \pm 2\dD)d$-regular factorisation.

\begin{enumerate}
\item We always have $\col_c(v) \le (1+2\dD)d$ for any $v,c$,
as once $v$ is $c$-full we never again use colour $c$ on edges at $v$.

\item There are three types of step in the algorithm:
 cleaning, exceptional and standard.
Exceptional steps only occur in early rounds 
at safe vertices that are $c$-sparse with $c=c^*$. 

\item Cleaning steps use edges at $\le dm$ neighbours of some vertex,
and exceptional / standard steps use one edge incident to two vertices.
As a vertex can only change status when we colour an edge at it,
in each step the initial batch $B$ of vertices that become unsafe
has size $|B| \le dm$.

\item On the event $G_1$ we have $|H[V]| \le 6|V|$ whenever $|V| \le n^{.9}$,
so when we increase $B$ by repeatedly adding any safe $v$ 
with $\ge 12$ neighbours in $B$ we terminate with $|B| \le 2dm$.

\item 
We have $d^{.1} |B \sm B'| \le \sum_{w \in B \sm B'} |H[B](w)| \le 2|H[B]| \le 12|B|$ 
on $G_1$, so $|B \sm B'| \le 12d^{-.1} |B| \le 24 d^{.9} m$.

\item On the event $G_2$ we claim  
that $|E(H_B)| \le .5d^{.2}|B|$, and so each $v \in B$
has $\le d^{.2}$ earlier neighbours in a degeneracy order of $H_B$.
To see this, we first note that $|H[B]| \le 6|B|$.
Next, the number of $uv \in H_B$ having a common neighbour $w \in B'$
is $\le \sum_{w \in B'} \tbinom{|H[B](w)|}{2} 
\le d^{.1} \sum_{w \in B} |H[B](w)|/2 = d^{.1} |H[B]|$.
Thus if $|E(H_B)| > .5d^{.2}|B|$ we have $>.1d^{.2}|B|$
pairs in $B$ with a common safe neighbour $w$.
However, each such $w$ can be counted $<12$ times by definition of $B$,
so there are $>20|B|$ such $w$, contradicting $G_2$.

\item In particular, for any $v \in B$ we clean $\le d^{.2}$ vertices $u \in H[B](v)$
before $v$, so writing $G'$ for $G$ just before $v$ became unsafe
we have $|G'(v)| - d^{.2} \le |X_v| \le |G'(v)|$.

\item For each $v \in B$, before $v$ become unsafe 
it had $\le 2\eta dm$ unsafe neighbours,
so any $v$ always has $\le 2\eta dm + d^{.2}$ neighbours
that have been cleaned before it.

\item If $v$ is safe it is not blocking,
so $< \tT^2 m$ colours have ever been full at $v$.
If becomes unsafe we colour $\le d^{.2}$ more edges at $v$, 
so $\le d^{.2}$ further colours became full at $v$.
Similarly, each $u \in X_v$ has been full for $< \tT^2 m + d^{.2}$ colours,
so $< \tT m + d^{.2} \tT^{-1} < 2\tT m$ colours can be full 
for $> \tT |X_v|$ many $u \in X_v$, using $m \ge \sqrt{d} \gg \tT^{-1}$.
We deduce that $|C_v| \ge (1-3\tT)m$.

\item If $v$ is safe then it is not blocked, so any colour $c$ has 
$< d^{.9} m$ safe neighbours of $v$ that have ever been $c$-full.
If $v$ becomes unsafe in some batch $B$ then it has $\le d^{.2}$
earlier vertices $u$ in $B$ such that $u,v$ have a common neighbour $w$
that is safe or in $B'$. Each such $u$ receives $<1.1d$ edges of colour $c$
when it is cleaned, so can cause $<1.1d$ neighbours of $v$ to become $c$-full.
Any other $u$ before $v$ in $B$ has $uv \notin E[H_B]$,
so any common neighbour $w$ of $u,v$ is in $B \sm B'$,
which has size $\le 24 d^{.9} m$.
Thus the number of $c$-full $u \in X_v$
is $\le d^{.9} m + 1.1d^{1.2} + 24 d^{.9} m < 25d^{.9} m$, 
as $m \ge \sqrt{d}$.

\item
For any $v$ with $|X_v| > d^{.99} m$, any colour is full for
$< 25d^{.9} m < \tT |X_v|$ vertices $u \in X_v$,
so $C_v$ is the set of colours not full at $v$.
In particular, this holds in all early rounds at any $v$
that is safe and so not atypical.

\item The function $f(i)$ used to define atypical vertices
satisfies $3\eta \le f(i) \le \eta^{.9}$, as $\eta \ll \eps$. 
Thus at any early round $i$, any vertex $v$ that is safe,
and so not atypical, has $|\col(v) - 2i| \le \eta^{.9} dm$, 
so $|G(v)| = dm - 2i \pm \eta^{.9} dm$.

\item If $v$ is cleaned at any stage in the algorithm 
then it receives $|X_v|/|C(v)| \pm 1$
additional edges of each colour, which at an early round $i$
is $d - 2i/m \pm 2\eta^{.9} d$, using $\tT \ll \eta$ and $|C_v| \ge (1-3\tT)m$.

\item If a vertex $v$ becomes $c$-sparse at some early round $i$, 
then while $v$ remains $c$-sparse and safe
at least one edge of colour $c$ is used at $v$ in each round,
so we always have $\col_c(v) \ge 2i/m - \dD d - 2$.

\item If $v$ is cleaned at an early round $i$ 
then the final number of colour $c$ edges at $v$ is 
$\ge 2i/m - \dD d - 2 + d - 2i/m - 2\eta^{.9} d \ge (1-2\dD)d$.
On the other hand, if $v$ is not cleaned at any early round then
at the start of the late rounds we have
$\col_c(v) \ge 2(1-\eps)d/2 - \dD d - 2 \ge (1-2\dD)d$.

\item Thus every colour $H_c$ of the output factorisation is $(1 \pm 2\dD)d$-regular.
\end{enumerate}

We now consider the selection of colours in a cleaning step
according to a spread matching in the auxiliary bipartite graph $B_v$ 
on $(X_v,Y_v)$ with $|Y_v|=|X_v|=dm-\col(v)$,
where $Y_v$ is a multiset supported in $C_v$ 
with multiplicities that differ by $\le 1$,
and $uc$ is an edge whenever $u$ is not $c$-full. 

For any $W \sub G(v)$ and colours $c_W = (c_w: w \in W)$,
let $\mc{A}_W^{c_W}$ be the event 
that $vw$ gets colour $c_w$ for all $w \in W$.

\begin{lem} \label{lem:clean}
Given any history,
the event $E_v$ that $B_v$ has minimum degree
$\ge (1-\tT^{.1})|X_v|$ has $\mb{P}(E_v)>1/2$,
and each event $\mc{A}_W^{c_W} \vert E_v$ has 
probability $\le (L/m)^{|W|}$.
\end{lem}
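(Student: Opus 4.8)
The plan is to establish the two assertions separately. For the lower bound $\mathbb{P}(E_v) > 1/2$, first note that $E_v$ is only non-trivial when $|Y_v| < |C_v|$, in which case $Y_v$ is a uniformly random $|X_v|$-subset of $C_v$. By the basic properties recorded in \Cref{subsec:basic}, at most $25d^{.9}m$ vertices $u \in X_v$ are $c$-full for any fixed colour $c$, and $|C_v| \ge (1-3\tT)m$. Thus for a fixed $u \in X_v$, the degree of $u$ in $B_v$ is $|Y_v|$ minus the number of colours in $Y_v$ for which $u$ is $c$-full; since $u$ is $c$-full for at most $3\tT m + (\text{blocking slack})$ colours --- in fact for $< \tT^2 m + d^{.2} < \tT m$ colours, as $v$ is not attacked and $u$ is not blocking --- and $Y_v$ is a uniform subset of $C_v$ of density $|X_v|/|C_v|$, the expected number of bad colours landing in $Y_v$ is at most $\tT m \cdot |X_v|/|C_v| \le 2\tT |X_v|$. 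A Chernoff/hypergeometric tail bound (\Cref{lem:chernoff}) gives that $u$ has degree $\ge (1-\tT^{.1})|X_v|$ in $B_v$ except with probability $\exp(-\Omega(\tT^{.2}|X_v|))$; since $|X_v| \ge |G(v)| - d^{.2}$ is large in all the relevant rounds (it is $\Omega(\eps dm)$ throughout, as $v$ is safe hence not atypical), a union bound over the $|X_v| \le dm$ vertices of $X_v$ makes the total failure probability far below $1/2$.

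For the spreadness bound, the strategy is to invoke \Cref{lem:spreadmatch} on $B_v$. I first check $B_v$ is $(\tT^{.1}, |X_v|/|Y_v|)$-super-regular, or rather: conditioned on $E_v$, $B_v$ has minimum degree $\ge (1-\tT^{.1})|X_v|$ on both sides --- on the $X_v$ side by definition of $E_v$, and on the $Y_v$ side because each colour $c \in C_v$ (equivalently each copy in the multiset $Y_v$) is $c$-full for $\le \tT|X_v|$ vertices of $X_v$, which is built into the definition of $C_v$. A bipartite graph with both parts of size $N$ and minimum degree $\ge (1-\tT^{.1})N$ is $(\tT^{.05}, 1)$-super-regular (uniformity is immediate from the density being $\ge 1-\tT^{.1}$ on both sides, since any two large vertex subsets miss each other in at most $\tT^{.1}N \cdot N$ edges). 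So \Cref{lem:spreadmatch}, applied with $\dD \leftrightarrow \tT^{.05}$, $p \leftrightarrow 1$ and an appropriate constant in place of $L$, yields an $L/|X_v|$-spread distribution on perfect matchings of $B_v$; since $|X_v| = dm - \col(v)$, and in the rounds where cleaning occurs $\col(v) \le (1-\Omega(\eps))dm$ so $|X_v| \ge \eps dm / 2 \ge \sqrt{d} \cdot m \cdot (\eps\sqrt{d}/2) \gg m$, this is in particular $(L'/m)$-spread for a suitable constant $L'$.

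It remains to translate matching-spreadness into the bound on $\mathbb{P}(\mathcal{A}_W^{c_W} \mid E_v)$. The perfect matching $M$ of $B_v$ selected determines the colour of each cleaned edge $vw$ via $w \mapsto$ (the colour underlying the $Y_v$-vertex matched to $w$). For fixed $W \subseteq G(v)$ and $c_W$, the event $\mathcal{A}_W^{c_W}$ is the event that $M$ contains, for each $w \in W$, an edge from $w$ to \emph{some} copy of $c_w$ in $Y_v$. Writing $k_c$ for the multiplicity of colour $c$ in $Y_v$ (so $k_c \le |Y_v|/|C_v| + 1 \le 2|X_v|/m$, say), the number of ways to choose a target copy for each $w$ is $\prod_{w\in W} k_{c_w}$, and for each such choice the corresponding set of $|W|$ matching-edges has probability $\le (L/|X_v|)^{|W|}$ of all lying in $M$, by spreadness. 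Hence $\mathbb{P}(\mathcal{A}_W^{c_W} \mid E_v) \le \prod_{w\in W} k_{c_w} \cdot (L/|X_v|)^{|W|} \le (2|X_v|/m \cdot L/|X_v|)^{|W|} = (2L/m)^{|W|}$, which is $\le (L/m)^{|W|}$ after relabelling the constant (recall $L$ in the statement is the global constant, larger than the one supplied by \Cref{lem:spreadmatch}).

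The main obstacle I anticipate is bookkeeping rather than conceptual: one must carefully assemble, from the long list in \Cref{subsec:basic}, exactly the facts that (i) $|X_v|$ is large enough (order $\eps dm$, hence $\gg m$) in every step where cleaning is invoked, so that $L/|X_v|$ really is $O(1/m)$; (ii) the number of $c$-full vertices in $X_v$ and the number of full colours at $v$ are both $o(\tT|X_v|)$, which is what powers both the super-regularity of $B_v$ and the $E_v$ estimate; and (iii) the multiplicities $k_c$ are within a factor $O(1)$ of $|X_v|/m$. Once these are in hand the argument is the routine "count targets, apply spread to each" reduction above. A minor subtlety is the conditioning: \Cref{lem:spreadmatch} is applied to the \emph{conditional} graph $B_v \mid E_v$, which is legitimate since $\mathbb{P}(E_v) > 1/2$ and conditioning on an event of probability $\ge 1/2$ only doubles the spread constant (the observation recorded after the definition of $q$-spread), so the final constant absorbed into $L$ covers this as well.
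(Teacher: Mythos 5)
There is a genuine gap: your argument assumes throughout that $|X_v|$ is large (you claim $|X_v|=\Omega(\eps dm)\gg m$ because $v$ is safe, hence not atypical), but this is false. Cleaning happens whenever a vertex becomes unsafe, which can occur in late rounds, when $\col(v)$ is close to $dm$ and $|X_v|=dm-\col(v)$ can be as small as a bounded constant; "not atypical" only controls $\col(v)$ relative to $2i$ in early rounds and does not prevent this. This breaks both halves of your proof. For $\mb{P}(E_v)>1/2$, your Chernoff-plus-union-bound over $x\in X_v$ has failure probability $\exp(-\Omega(\tT^{.2}|X_v|))$ per vertex, which is useless when $|X_v|$ is bounded; the paper splits into cases, using the hypergeometric Chernoff bound only when $|X_v|>\tT^{-.4}$ and otherwise the trivial estimate $\mb{P}(|B_v(x)|<|X_v|)\le |X_v|(\tT^2+d^{.2}/m)<2\tT^{1.6}$ (and notes the bound is deterministic when $|Y_v|\ge\tT^{.9}m$). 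For the spreadness bound, your "count target copies, apply matching-spread" computation gives $\prod_w k_{c_w}\cdot(L/|X_v|)^{|W|}$, and when $|X_v|\ll m$ the multiplicities satisfy $k_{c_w}\le 1$, so you only get $(L/|X_v|)^{|W|}$, which is far larger than $(L/m)^{|W|}$. The missing idea is the one the paper uses in the case $|X_v|<|C_v|/2$: there $Y_v$ is a uniformly random subset of $C_v$, so the probability that the required colours even appear in $Y_v$ is $\mb{P}(c_W\sub Y_v)\le(2|Y_v|/m)^{|W|}$, and this factor (with $|Y_v|=|X_v|$) multiplied by the matching-spread factor $(L/6|X_v|)^{|W|}$ restores the needed $1/m$ per edge. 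Without exploiting the randomness of $Y_v$ in this regime, the stated bound $\le(L/m)^{|W|}$ cannot be reached.

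Apart from this, your treatment of the large-$|X_v|$ regime (deterministic degree bounds on both sides of $B_v$, super-regularity, \cref{lem:spreadmatch}, and the multiplicity count $k_c\le|X_v|/|C_v|+1$) matches the paper's argument for the case $|X_v|\ge|C_v|/2$, so the fix is to add the two small-$|X_v|$ cases rather than to change the overall strategy.
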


\begin{proof}
By definition of $C_v$ for $v$ being cleaned,
every $y \in Y_v$ has degree $|B_v(y)| \ge (1-\tT)|X_v|$.
Now consider any $x \in X_v$. As noted above, 
$\le \tT^2 m + d^{.2}$ colours are full at $x$,
so $|B_v(x)| \ge |Y_v| - (\tT^2 m + d^{.2})(|X_v|/|C_v| + 1)$.
If $|Y_v| \ge \tT^{.9} m$ this implies $|B_v(x)| \ge (1-\tT)|X_v|$,
as $m \ge \sqrt{d} \gg 1/\tT$.

On the other hand, if $|Y_v| < \tT^{.9} m < |C_v|$ then
we choose $Y_v \sub C_v$ uniformly at random.
Then each $|B_v(x)|$ is hypergeometric with mean
$\ge (1 - \tT^2 - d^{.2}/m) |X_v|$, so by Chernoff 
$\mb{P}(|B_v(x)| < (1-\tT^{.1})|X_v|) < e^{-\tT^{.3} |X_v|}$, say.
If $|X_v| > \tT^{-.4}$ then by a union bound over $x \in X_v$
we have minimum degree $\ge (1-\tT^{.1})|X_v|$ with failure probability $<1/2$.
On the other hand, if $|X_v| \le \tT^{-.4}$ then we use the simple bound
$\mb{P}(|B_v(x)| < |X_v|) < |X_v| (\tT^2 + d^{.2}/m) < 2\tT^{1.6}$,
so again by a union bound over $x \in X_v$ we have minimum degree 
$\ge (1-\tT^{.1})|X_v|$ with failure probability $<1/2$.

In all cases, $B_v$ has minimum degree $\ge (1-\tT^{.1})|X_v|$,
either deterministically or after conditioning on $E_v$ with $\mb{P}(E_v)>1/2$.
In particular, $B_v$ is $(\tT^{.01},p)$-super-regular
for some $p \in [1-\tT^{.01},1]$, so \cref{lem:spreadmatch}
gives a random perfect matching $M$ that is
conditionally $L/6|X_v|$-spread, say, using $\tT \ll 1/L \ll 1$.
 
To see what this implies for the colouring of edges at $v$,
consider any $\es \ne W \sub X_v$, colours $c_W = (c_w: w \in W)$,
and let $\mc{A}_W^{c_W}$ be the event 
that $vw$ gets colour $c_w$ for all $w \in W$.
There are $\le (|X_v|/|C_v| + 1)^{|W|}$ 
choices of matching in $B_v$ corresponding to this colouring,
each appearing in $M$ with probability $\le (L/2|X_v|)^{|W|}$.
If $|X_v| \ge |C_v|/2$ then  $|X_v|/|C_v| + 1 \le 3|X_v|/|C_v|$,
so $(|X_v|/|C_v| + 1)^{|W|} (L/6|X_v|)^{|W|} \le (L/2m)^{|W|}$. 
We deduce  $\mb{P}(\mc{A}_W^{c_W} \mid E_v) \le (L/m)^{|W|}$.

It remains to consider the case $|X_v|<|C_v|/2$.
Now we recall that $Y_v \sub C_v$ is uniformly random.
We can assume $c_W$ is a set (no repeated colours).
Then $\mb{P}(c_W \sub Y_v) \le ( \frac{|Y_v|-|W|}{|C_v|-|W|} )^{|W|}
\le (2|Y_v|/m)^{|W|}$, as $|Y_v|<m/2<|C_v|$,
so $\mb{P}(\mc{A}_W^{c_W} \mid E_v) \le 2 \mb{P}(c_W \sub Y_v) (L/6|X_v|)^{|W|}
\le (L/m)^{|W|}$, as required.
\end{proof}
 
\subsection{Proof modulo lemmas}

In this subsection we prove \cref{lem:rough},
assuming the following two lemmas that will 
be proved in subsequent subsections.

\begin{lem} \label{lem:danger}
The algorithm aborts with probability $<\sqrt{\tT}$.
\end{lem}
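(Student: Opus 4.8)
The plan is to bound, separately and each by a small multiple of $\sqrt{\tT}$, the probabilities of the two ways the algorithm aborts: that at least $\tT n$ vertices become unsafe, and that some colour becomes bad (the good events $G_0,G_1,G_2$ fail with probability $O(n^{-14})$, so we may condition on them). For the first, call a vertex \emph{directly dangerous} if at some step we colour an edge at it while it is atypical, exceptional, blocking, attacking, blocked or attacked, and let $D$ be the set of such vertices; for any moment of the run let $U$ be the current set of unsafe vertices. I claim that on $G_0$ one has $|U|\le 2|D|$ whenever $|U|\le 3\dD' n$: ordering $U$ by the time its members first became unsafe, each $v\in U\sm D$ --- which became unsafe only through the clause `$>2\eta dm$ unsafe neighbours' --- has more than $2\eta dm$ neighbours in $U$ earlier than itself, so $|H[U]|>2\eta dm\,|U\sm D|$, while $(3\dD',\eta dm/n)$-sparseness gives $|H[U]|\le\eta dm\,|U|$, whence $|U\sm D|<|U|/2$. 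Since each step enlarges $U$ by a batch of size $\le 2dm\ll\dD' n$ (basic properties 3--4), it follows deterministically on $G_0$ that if $|D|$ stays below $\dD' n$ then $|U|$ stays below $3\dD' n<\tT n$; so it suffices to bound $\mb{P}(|D|\ge\dD' n)$.

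I cover $D$ by the six sets of vertices that are ever atypical, exceptional, blocking, attacking, blocked or attacked, and bound the \emph{expected} size of each by $\tT^2\dD' n$, so that Markov gives $\mb{P}(|D|\ge\dD' n)=O(\tT^2)$ once $d$ is large enough. In every case a safe vertex acquires the relevant property only through a deviation of a locally-tracked quantity --- $\col(v)$ for `atypical', $\col_c(v)$ for `$c$-full' and `$c$-sparse', the number of $c$-full (resp.\ $c$-sparse) neighbours for `blocked' (resp.\ `attacked'), the number of colours full (resp.\ sparse) at $v$ for `blocking' (resp.\ `attacking') --- by an amount ($f(i)dm$, $\dD d$, $d^{.9}m$, $\tT^2 m$) far larger than its standard deviation. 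Using that the algorithm's choices are near-uniform --- a standard step picks a colour uniformly among the $\ge(1-3\tT)m$ colours not full at either endpoint (basic property 9) and a partner near-uniformly in $G(v^*)$ (basic property 12), and a cleaning step uses a spread matching (\cref{lem:clean}) --- each of these quantities is stochastically dominated (by \cref{lem:bernoulli-domination}, or by a Freedman-type bound exploiting that its conditional increments are $O(1)$ per step with total conditional variance $O(d)$) by a binomial, so the deviation has probability at most roughly $e^{-\Omega(\dD^2 d)}$ for a single fixed colour, and $e^{-\Omega(d^{.9}m)}$ (resp.\ $e^{-\Omega(\tT^2 m)}$) for `blocked'/`attacked' (resp.\ `blocking'/`attacking'). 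Summing over the $\le 2\log^4 n/d$ colours is harmless because, for $m\ge\sqrt d$, the quantities $m\,e^{-\Omega(d^{.9}m)}$, $m\,e^{-\Omega(\tT^2 m)}$ and $m^{-1}e^{-\Omega(\dD^2 d)}$ are all uniformly tiny; for `blocked'/`attacked' one also uses that a fixed vertex set of size $\le e^{-\Omega(\dD^2 d)}n$ has only $e^{-\Omega(\dD^2 d)}O(n)$ vertices with $\ge d^{.9}m$ neighbours inside it, by the $dm$-regularity of $H$.

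For bad colours the per-colour expectation is too close to the threshold for Markov to beat the union over colours, so genuine concentration in $n$ is needed. For a fixed colour $c$, let $N_c(t)$ be the number of vertices that are $c$-full or $c$-sparse after step $t$, with Doob decomposition $N_c(t)=M_c(t)+A_c(t)$, $A_c(t)=\sum_{s\le t}\mb{E}[N_c(s)-N_c(s-1)\mid\mc{F}_{s-1}]$. One shows $A_c(T)\le\tfrac12\tT^4 n$ (the core estimate --- see below), and that the quadratic variation of $M_c$ is $O(dm\cdot A_c(T))=O(n\log^4 n)$ with per-step increments $O(dm)$ (a cleaning step changes the status of $\le 2dm$ vertices, a standard or exceptional step of $O(1)$, and there are $\le\tT n$ cleaning steps). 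Freedman's inequality then gives $\mb{P}(M_c(T)\ge\tfrac12\tT^4 n)\le\exp(-\Omega(\tT^8 n/\log^4 n))$, and a union over the $\le 2\log^4 n/d$ colours keeps this $\ll\sqrt{\tT}$. Together with the previous step the total abort probability is $O(\tT^2)+o_n(1)<\sqrt{\tT}$.

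The main obstacle is the compensator bound $A_c(T)\le\tfrac12\tT^4 n$ (and the analogous bounds folded into the second step). Since $\col_c(v)$ rises only about once per $m$ steps at $v$, a step contributes to $A_c$ only through the $O(1)$ endpoints that are currently within $O(1)$ of a full threshold, and one needs that at most $e^{-\Omega(\dD^2 d)}O(n)$ vertices ever enter such a window --- which is essentially the statement being proved, so it must be organised as a self-improving (or stopped-process) argument controlling the window-entering count simultaneously. This is where the design of the algorithm does its work: the full/sparse thresholds, the pruning in the definition of $C_v$, the batching rule, and the split into early and late rounds are exactly what keeps the conditional law of each newly coloured edge close to uniform, so that the locally-tracked quantities drift slowly with the right variance and the various `bad deviation' events stay rare and, crucially, do not reinforce one another (e.g.\ a vertex becoming $c$-full never directly raises any neighbour's $\col_c$). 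The secondary nuisance is the cleaning steps: running the argument only up to the first moment a bound would be violated, one checks there are $\le\tT n$ of them and that each perturbs every relevant count by only $O(dm)$, so the quadratic variation stays $o(n^2)$ and Freedman closes.
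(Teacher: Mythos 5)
Your high-level plan matches the paper's: reduce the unsafe-vertex abort to a count of dangerous vertices via the sparsity event, bound the atypical/exceptional counts by deviation estimates plus Markov, and handle bad colours by a per-colour concentration argument in $n$ followed by a union bound over the $m<\log^4 n$ colours (the paper also folds ``blocking/attacking'' into the bad-colour event by pigeonhole). But there are two problems. A minor one first: your reduction asserts that keeping $|D|<\dD' n$ keeps $|U|\le 3\dD' n<\tT n$, which is backwards — in the paper's hierarchy $\tT\ll\dD'$, so the abort threshold $\tT n$ is far \emph{below} $3\dD' n$; you must aim at $|D|<\tT n/2$ (as the paper does), which is repairable but changes how small your per-vertex probabilities must be and is symptomatic of not tracking the hierarchy.

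The genuine gap is in the core estimates. The heart of the paper's proof is (i) a per-vertex bound of $e^{-\sqrt d}$ on ever becoming $c$-full or $c$-sparse, obtained by stopping at the first violation and decomposing $\col_c(v)$ into contributions from standard steps with $v^*$ not $c$-full, exceptional steps, cleaning steps, and standard steps at $c$-full neighbours, each coupled to explicit history-independent Bernoulli/binomial variables (using \cref{lem:step} and \cref{lem:clean}); and (ii) concentration of the per-colour counts in $n$, obtained not by Freedman on the raw count but by exploiting that the coupled proxies are independent across non-adjacent vertices (Chebyshev over the independent part, with Tur\'an-extracted independent sets for blocked/attacked) together with exponential supermartingale ``predictors'' and Azuma for the cleaning and $D$ contributions. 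Your proposal replaces all of this with the assertion that each tracked quantity is ``stochastically dominated by a binomial'', and for the bad-colour event you yourself concede that the compensator bound $A_c(T)\le\tfrac12\tT^4 n$ ``is essentially the statement being proved'' and must come from an unspecified self-improving argument — that is precisely the content of the lemma, so it cannot be left as an appeal to the design of the algorithm. In addition, your counting shortcut for blocked/attacked (a set of size $e^{-\Omega(\dD^2 d)}n$ has few vertices with $\ge d^{.9}m$ neighbours inside it) both loses a factor $d^{.1}$, which is unbounded since $d$ may grow with $n$, and is circular, since it presupposes a high-probability bound on the number of ever-$c$-full vertices, i.e.\ the bad-colour estimate itself. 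As written, the proposal identifies the right skeleton but does not prove the lemma.
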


\begin{lem} \label{lem:quasi}
If $H$ is $(\dD,\dD',\eta,dm/n)$-quasirandom
then with failure probability $<n^{-12}$
either the algorithm aborts or
every $H_c$ is $(2\dD,\dD',\eta,d/n)$-quasirandom.
\end{lem}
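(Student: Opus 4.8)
The plan is to verify the three conditions in the definition of $(2\dD,\dD',\eta,d/n)$-quasirandom for each colour class $H_c$: approximate regularity, the $(3\dD',\eta d/n)$-sparsity condition, and the $(2\dD,\dD',d/n)$-density condition. Approximate regularity of each $H_c$ is already established in the basic properties (item 17 in \cref{subsec:basic}), so no further work is needed there; it holds deterministically whenever the algorithm does not abort. For sparsity, note that $H_c \sub H$ and $H$ satisfies the sparsity event $G_1$ (indeed $|H[V]| \le 6|V|$ for $|V| \le 2n^{.9}$, hence trivially $|H_c[V]| \le 6|V| \le |V| \eta d n$ since $\eta d n \gg 6$), so the relevant sparsity for $H_c$ on sets of size up to $\min\{3\dD' n, 2n^{.9}\}$ is immediate; for sets $V$ with $|V| \in [2n^{.9}, 3\dD' n]$ one should instead invoke the spreadness of $H_c$. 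Here I would argue: since $H$ is $Cdm/n$-spread and $(H_1,\dots,H_m)$ is conditionally $C/m$-spread, each $H_c$ is $C^2 d/n$-spread, so by \cref{lem:sparse} (applied with parameters $(C^2, 3\dD', d, \eta)$ in place of $(C,\aA,r,\gG)$, which is valid as $\eta d \gg 5$ and $\eta \gg C^2 (60\dD')^{1/5}$ from the hierarchy) $H_c$ is $(3\dD', \eta d/n)$-sparse with failure probability $< n^{-14}$, and a union bound over the $m < n$ colours gives failure probability $< n^{-13} < n^{-12}/2$.

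The main obstacle is the density condition: for every $V_1, V_2$ with $|V_i| \ge \dD' n$ we need $|H_c[V_1, V_2]| \ge (1-2\dD)(d/n)|V_1||V_2|$. The quasirandomness hypothesis on $H$ gives $|H[V_1,V_2]| \ge (1-\dD)(dm/n)|V_1||V_2|$, so it suffices to show that the colouring distributes edges across the $m$ colours roughly evenly on every large pair, i.e.\ that $|H_c[V_1,V_2]|$ is close to $|H[V_1,V_2]|/m$. I would track, for a fixed pair $(V_1,V_2)$ and fixed colour $c$, the random variable counting colour-$c$ edges inside $V_1 \times V_2$ as the algorithm runs, and show it is concentrated via a supermartingale / Azuma argument (\cref{lem:azuma}): at each standard step the active edge $v^*u^*$ lies in $V_1 \times V_2$ with the expected frequency, and the colour $c$ is chosen close to uniformly among the (many, by basic property 9) non-full colours, so the expected increment of $|H_c[V_1,V_2]|$ tracks $(1/m) \cdot (\text{expected increment of } |H[V_1,V_2] \cap (\text{coloured edges})|)$; exceptional and cleaning steps contribute a lower-order correction controlled by the abort conditions ($< \tT dm$ exceptional edges per vertex, $< \tT n$ unsafe vertices), and full/sparse vertices are controlled by the bad-colour abort ($< \tT^4 n$ vertices ever $c$-full or $c$-sparse). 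One then needs the increments to be bounded (each step changes the count by $O(d)$ in the cleaning case, by $1$ otherwise) to get a variance proxy small enough that the Azuma tail beats the union bound over $< 4^{2n}$ choices of $(V_1,V_2)$ and $< n$ colours. I expect the bookkeeping of the exceptional and cleaning corrections — showing they are $o(\dD d |V_1||V_2|/n)$ on the good event — to be the most delicate part, but all the needed global bounds (total exceptional edges, total unsafe vertices, per-colour full/sparse counts) are exactly what the abort conditions guarantee.

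Finally, combining: on the complement of the abort event, approximate regularity holds deterministically, and the sparsity and density conditions each fail with probability $< n^{-12}/2$ after the union bounds, so with failure probability $< n^{-12}$ either the algorithm aborts or every $H_c$ is $(2\dD,\dD',\eta,d/n)$-quasirandom, as required. (Note \cref{lem:danger} is not needed here, since the statement is phrased as a disjunction with the abort event.)
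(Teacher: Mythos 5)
Your treatment of regularity and sparsity is essentially the paper's: regularity is deterministic from the basic properties, and sparsity follows by applying \cref{lem:sparse} to each $H_c$, whose $O(d/n)$-spreadness comes from composing the spreadness of $H$ with the conditional $C/m$-spreadness of the output factorisation given by \cref{lem:rough}.1 (legitimate because \cref{lem:quasi} is only used for \cref{lem:rough}.2). Your extra case split at $2n^{.9}$ is unnecessary but harmless (note the sparsity target is $|H_c[V]|\le \eta d\,|V|$, not $\eta d n|V|$, so you need $6\le \eta d$, which holds as $1/d\ll\eta$).

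The genuine gap is in the density step: Azuma with worst-case bounded differences (\cref{lem:azuma}) is quantitatively too weak here. The count $|H_c[V_1,V_2]|$ can change at every step in which an edge of $H[V_1,V_2]$ is coloured; there are about $|V_1||V_2|dm/n$ such steps, and at each the centred increment is of order $1$ (the edge either does or does not receive colour $c$), so any valid variance proxy is at least of order $|V_1||V_2|dm/n$, while the deviation you must exclude is only of order $\dD|V_1||V_2|d/n$. The Azuma exponent is therefore at most of order $\dD^2|V_1||V_2|d/(mn)\le \dD^2 d^2 n/\log^4 n = o(n)$ in the regime forced by \cref{lem:reduce} (where $dm\approx\log^4 n$ with $d$ constant, so $m\to\infty$), which loses to the $4^{2n}$ (or even $\tbinom{n}{\dD' n}^2$) union bound over $(V_1,V_2)$. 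The issue is that Azuma ignores that each increment is a Bernoulli of mean about $1/m$; one needs a mean-based (Bernstein/Freedman or Chernoff-type) bound. The paper does exactly this: it first discards the $O(\dD')$-fraction of edges of $H[V_1,V_2]$ incident to unsafe vertices, exceptional-step edges, and $c$-full vertices (using the non-abort conditions, as you also propose), and for the remaining edges, coloured at standard steps with both ends not $c$-full, it uses \cref{lem:step}.1 to see that each receives colour $c$ with conditional probability $1/(m\pm 2\tT^2 m)$, couples via \cref{lem:bernoulli-domination} to a binomial of mean $\ge(1-1.1\dD)|V_1||V_2|d/n$, and applies Chernoff; the exponent is then proportional to the mean, i.e.\ at least $\OO\brac{\dD^2(\dD')^2 dn}\gg n$ since $1/d\ll\dD'\ll\dD$, which does beat the union bound. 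Your argument can be repaired by substituting this coupling-plus-Chernoff step (or an exponential-moment supermartingale) for the bounded-difference Azuma bound, but not as written.
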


We also require the following lemma on conditional probabilities 
of certain events in each step of the algorithm.

\begin{lem} \label{lem:step}
Consider any round $i$, any colour $c$ and any $vw \in G$.

1. If there is a standard step with $v^*=v$ 
and $v$ and $w$ are both not $c$-full
then the conditional probability that it uses colour $c$
given that it colours $vw$ is $1/(m \pm 2\tT^2 m)$.

2. If $i$ is early and there is a standard or exceptional step
with $v^*=v$ then the conditional probability that it colours $vw$ 
is $(1 \pm 2\eps^{-1}f(i))/(dm-2i) = (1 \pm \eta^{.8})/(dm-2i) < 2/(\eps dm)$.

3. If there is a standard step with $v^*=v$ and $c \in C_v$
then the conditional probability that it uses colour $c$ is $(1 \pm 2\tT)/m$.
\end{lem}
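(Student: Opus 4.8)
The plan is to compute each conditional probability directly from the sampling rule for a step, using only the basic properties recorded in §\ref{subsec:basic} and \cref{lem:clean}. Throughout we condition on the full history up to the start of the step in question; the only randomness is the uniform choice of $u^*$ (and the uniform choice of colour in a standard step).

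For part 1, in a standard step with $v^*=v$ we first pick $u^*\in G(v)$ uniformly, then pick a colour uniformly among all $c'$ with $v,u^*$ both not $c'$-full. Conditioning on the event that $u^*=w$, the colour is uniform over $C' := \{c' : v,w \text{ both not }c'\text{-full}\}$. So the conditional probability of colour $c$ given that $vw$ is coloured is exactly $1/|C'|$ when $c\in C'$. It remains to estimate $|C'|$: since $v$ is safe (hence not blocking) and $w$ is safe, each has been $c'$-full for fewer than $\tT^2 m$ colours (property 9 of §\ref{subsec:basic}, before any cleaning — and in a standard step no cleaning of $v$ or $w$ has happened in this step), so $|C'| \ge m - 2\tT^2 m$; trivially $|C'|\le m$. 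Hence the probability is $1/(m\pm 2\tT^2 m)$ as claimed. (If $c\notin C'$ the step cannot use $c$, but the hypothesis rules this out.)

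For part 2, in either a standard or an exceptional step with $v^*=v$ the vertex $u^*$ is chosen uniformly from $G(v)$ (standard) or from $G_{c^*}(v)\subseteq G(v)$ (exceptional); in the exceptional case $v=v^*$ is $c^*$-sparse and safe, and I should first argue that for a fixed target neighbour $w$ the relevant count is still $|G(v)|(1\pm o(1))$. The main point is that $|G(v)| = dm - \col(v)$ and, since $v$ is safe it is not atypical, so at early round $i$ we have $|\col(v)-2i|\le f(i)dm$ (property 12 together with the bound $f(i)\le\eta^{.9}$), giving $|G(v)| = (dm-2i)(1\pm 2\eps^{-1}f(i))$ after rewriting $f(i)dm/(dm-2i)\le \eps^{-1}f(i)$ using $i<(1-\eps)dm/2$. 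For an exceptional step one additionally checks $|G_{c^*}(v)| = |G(v)|(1\pm o(1))$: the edges removed from $G(v)$ are those to $c^*$-full safe neighbours, and $v$ is not blocked, so there are $<d^{.9}m$ of these, negligible against $|G(v)|\ge \eps dm$. This yields the conditional probability $(1\pm 2\eps^{-1}f(i))/(dm-2i)$ of colouring $vw$; since $f(i)\le\eta^{.9}$ and $\eta\ll\eps$, $2\eps^{-1}f(i)\le\eta^{.8}$, and $(dm-2i)\ge\eps dm$ gives the final bound $<2/(\eps dm)$.

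For part 3, combine parts 1 and 2: given a standard step with $v^*=v$ and $c\in C_v$, I sum over the choice of $u^*=w$. Since $c\in C_v$, the vertex $v$ is not $c$-full and at most $\tT|G(v)|$ neighbours $w\in G(v)$ are $c$-full; for every non-$c$-full neighbour $w$, part 1 gives probability $1/(m\pm 2\tT^2m)$ of using colour $c$ conditional on $u^*=w$, and $u^*=w$ has probability $1/|G(v)|$. Hence $\Pr[\text{colour }c] = \frac{\#\{w\in G(v): w\text{ not }c\text{-full}\}}{|G(v)|}\cdot\frac1{m\pm 2\tT^2m} = \frac{1\pm\tT}{m\pm 2\tT^2 m}$, which absorbs into $(1\pm 2\tT)/m$. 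The one genuinely delicate point — the main obstacle — is the bookkeeping in part 2 that $v$ being safe really does control $\col(v)$ in both the standard and exceptional cases (and that the exceptional restriction to $G_{c^*}$ costs nothing), i.e. correctly invoking "not atypical" with the exact constant $f(i)$ and verifying the arithmetic $f(i)dm/(dm-2i)\le\eps^{-1}f(i)$; everything else is a short direct count.
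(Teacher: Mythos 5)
Your proposal is correct and follows essentially the same route as the paper: part 1 via safety (not blocking) bounding the number of full colours at $v$ and $w$, part 2 via not atypical controlling $\col(v)$ (plus not blocked to handle the $G_{c^*}$ restriction in exceptional steps), and part 3 by combining the definition of $C_v$ with part 1. The only cosmetic difference is that you spell out the arithmetic $f(i)dm/(dm-2i)\le\eps^{-1}f(i)$ explicitly, which the paper leaves to the hierarchy of constants.
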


\begin{proof}
For (1), recall that in a standard step
we choose a uniformly random $u^* \in G(v^*)$
and colour $u^*v^*$ by $c$ chosen uniformly at random
so that $u^*,v^*$ are both not $c$-full. 
We note that $v$ is safe, so not blocking,
so full for $< \tT^2 m$ colours.
Similarly, any uncoloured edge $uv$ has $u$ safe,
so not blocking, so full for $< \tT^2 m$ colours.
This implies (1), as we always choose
a colour from $m \pm 2\tT^2 m$ options.

For (2), note that $v$ is safe, so not atypical,
so $|\col(v) - 2i| \le f(i) dm \le \eta^{.9} dm$, as $i$ is early.
For a standard step we thus choose $u^*$ from
$|G(v)|=dm-\col(v) = dm-2i \pm f(i) dm$ options.
For an exceptional step, recall that we have some fixed colour $c^*=v$
and colour $v u$ where $u$ is a uniformly random neighbour of $v$
in the subgraph $G_c$ of $G$ of edges
where both ends are not $c$-full.
As $v$ is safe it is not blocked,
so $< d^{.9} m$ neighbours of $v$ are $c$-full.
Thus we choose $u^*$ from
$dm-2i \pm (f(i)+d^{-.1}) dm$ options.
The estimates in (2) follow as
$1/d \ll \tT \ll \eta \le f(i) \le \eta^{.9} \ll \eps \le 1-2i/dm$.

For (3), by definition of $C_v$ we choose $u^*$ not $c$-full 
with probability $1 \pm \tT$.
By (1) we then use $c$ with probability $1/(m \pm 2\tT^2 m)$, so (3) follows.
\end{proof}

\begin{proof}[Proof of \cref{lem:rough}] 
Recall that we have conditioned $H$ on a good event $G = G_0 \cap G_1$.
We now define two good events for the algorithm applied to $H \vert G$.
We let $\mc{G}$ be the good event that the algorithm does not abort,
and so outputs $(H_1,\dots,H_m)$ sampled from some distribution
on $(1 \pm 2\dD)d$-regular factorisations of $H$.
Then $\mb{P}(\mc{G}) > 1-\sqrt{\tT}$ by \cref{lem:danger}.
We let $\mc{G}'$ be the good event that
if $H$ is $(\dD,\dD',dm/n)$-dense
(and so $H \vert G$ is $(2\dD,\dD',\eta,d/n)$-quasirandom)
then every $H_c$ is $(2\dD,\dD',\eta,d/n)$-quasirandom.
Then $\mb{P}(\mc{G}') > 1-n^{-12}$ by  \cref{lem:quasi}.

Consider $S = (S_1,\dots,S_m)$ for some disjoint $S_c \sub E(K_{n,n})$.
Let $\mc{A}_S$ be the event $\bigcap_{c=1}^m \{ S_c \sub H_c \}$
and let $\mc{A}'_S$ be the event $\bigcap_{c=1}^m \{ S_c \sub H \}$.
Write $s = \sum_c |S_c|$. It suffices to show 
$\mb{P}(\mc{A}_S \cap \mc{G} \mid \mc{A}'_S) \le (C/2m)^s$.
Indeed, this implies \cref{lem:rough}.1, as we have
$\mb{P}(\mc{A}_S \mid \mc{G} \cap \mc{A}'_S) \le (C/m)^s$.
It also implies \cref{lem:rough}.2,
as if $H$ is $(\dD,\dD',dm/n)$-dense then
$\mb{P}(\mc{A}_S \mid \mc{G} \cap \mc{G}' \cap \mc{A}'_S) \le (C/m)^s$.
Note that we are only using \cref{lem:quasi} for the proof of \cref{lem:rough}.2,
so that we can later use \cref{lem:rough}.1 in the proof of \cref{lem:quasi}.

We partition $\mc{A}_S$ into events 
$(\mc{A}_{S,X,T,U,z,y}: T,U \sub S, X \sub V(S), z \in ([2] \times [d])^U, y \in [3]^s)$,
where $X$ is the set of vertices in $V(S)$ that are cleaned,
$T$ is the set of edges in $S$ coloured in standard steps,
$U$ is the set of edges in $S$ coloured in exceptional steps,
$z_e$ for each $e=uv \in U$ specifies the bijection between
$uv$ and $u^* v^*$ and how many exceptional steps with $c^*=c$ where $e \in S_c$ 
occur at $v^*$ up to and including the step when $e$ is coloured,
and $y_j$ for each $j \in [s]$ specifies whether the $j$th edge of $S$
to be coloured by the process is coloured by a standard step, an exceptional step,
or by cleaning (considering edges coloured
during any single cleaning step in an arbitrary order).

Taking a union bound, it suffices to show each
$\mb{P}(\mc{A}_{S,X,T,U,z,y} \cap \mc{G} \mid \mc{A}'_S) < (2d)^{-|U|} (C/100m)^s$.

Fix any $\mc{A} = \mc{A}_{S,X,T,U,z,y}$.
We imagine a \emph{monitor} for $\mc{A}$
that is sometimes asleep and not observing the process: 
it is awake exactly when a standard step chooses an uncoloured edge in $S$
or for some vertex $v$ incident to any uncoloured edge in $S$
we clean $v$ or an exceptional step occurs at $v$.
Each time the monitor wakes it rejects $\mc{A}$
if it sees an outcome inconsistent with $\mc{A}$, 
otherwise it goes back to sleep.
At the end of the process, $\mc{A}$ is accepted if it has not been rejected

We will bound the acceptance probability by a product over each step
when the monitor wakes of a bound on the conditional probability 
that it does not reject at this step.
   
When a standard step chooses an uncoloured edge $e$ in $S$
the monitor will reject unless 
this is consistent with $y$ and $e \in T$, when
it does not reject with probability $<2/m$ by \cref{lem:step}.1.

When we clean $v$ incident to any uncoloured edge in $S$
the monitor will reject unless $v \in X$,
this is consistent with $y$,
and the set $S_v$ of such uncoloured edges consists exactly 
of those edges in $S \sm (T \cup U)$ containing $v$, when
by \cref{lem:clean} it does not reject with probability $<(L/m)^{|S_v|}$.
 
When an exceptional step occurs at some $v$ 
incident to any uncoloured edge in $S$,
the monitor will reject 
if it colours some edge $e$ of $S$ unless $e \in U$ 
and this is consistent with $y$ and $z$,
when by \cref{lem:step}.2 it does not reject 
with probability $< 2/(\eps dm)$.

Crucially, the above estimates hold for any history and only depend
on the the sequence $y$ of types of events waking the monitor -
they do not depend on which edges are involved at each step.

We can apply these estimates inductively for $j \in [s]$
(we do not consider any $j$ with $y_j$ corresponding to an incomplete cleaning step).
Writing $j=j_1+j_2+j_3$, where the entries of $y_{\le j}$ have
$j_1$ corresponding to standard steps, $j_2$ to exceptional steps and $j_3$ to cleaning,
we see that the monitor has not yet rejected with probability
$< (2/m)^{j_1} (2/(\eps dm))^{j_2} (L/m)^{j_3}$.

Taking $C > 10^3 L\eps^{-1}$ we deduce 
$\mb{P}(\mc{A}_{S,X,T,U,z,y} \cap \mc{G} \mid \mc{A}'_S) < (2d)^{-|U|} (C/100m)^s$.
\end{proof}

\subsection{Quasirandomness}

Here we show that the algorithm maintains quasirandomness.

\begin{proof}[Proof of \cref{lem:quasi}]
Suppose $H$ is $(\dD,\dD',\eta,dm/n)$-quasirandom.
We assume that the algorithm does not abort and bound the probability that 
some $H_c$ is not $(2\dD,\dD',\eta,d/n)$-quasirandom.

Firstly, as shown in \cref{subsec:basic}, 
each $H_c$ is $(1 \pm 2\dD)d$-regular.

Secondly, by \cref{lem:rough}.1
the algorithm produces a factorisation $(H_1,\dots,H_m)$ of $H$
that is $2C^2 d/n$-spread conditional on $\mc{G}$.
We apply \cref{lem:sparse} with
$(2C^2,d,3\dD',\eta)$ in place of $(C,r,\aA,\gG)$,
noting that $\eta > 2C^2 (60\dD')^{1/5}$ and $d > 5/\eta$,
as $d \gg 1/\dD' \gg 1/\eta \gg C$.
Thus all $H_c$ are $(3\dD',\eta d/n)$-sparse
with failure probability $<n^{-13}$.

Thirdly, it remains to show density.
Consider any $V_i \sub U_i$ for $i=1,2$ with $|V_1|,|V_2| \ge \dD' n$.
By density of $H$ we have $|H[V_1,V_2]| \ge (1-\dD) |V_1||V_2| dm/n$.
We need to bound the failure probability of the event 
that all $|H_c[V_1,V_2]| \ge (1-2\dD) |V_1||V_2| d/n$.

As the algorithm did not abort, there are $< \tT n$ unsafe vertices.
Thus $< \tT n dm < \dD' |H[V_1,V_2]|$ edges are incident 
to unsafe vertices, using $\tT \ll \dD'$ and $|H[V_1,V_2]| \ge .9(\dD')^2 dmn$.
Any safe $v$ is not exceptional, so $< \tT dm$ edges at $v$ are coloured in
exceptional steps; this accounts for $< n \tT dm < \dD' |H[V_1,V_2]|$ edges again.
Also, there is no bad colour, so any colour $c$ has $< \tT^4 n$ vertices 
that are $c$-full, so $< \tT^4 n (|V_1|+|V_2|) < \dD' |H[V_1,V_2]|$ 
edges are incident to vertices that are $c$-full.
 
Fix $c$ and let $H'$ be the (random) set of edges in $H[V_1,V_2]$
coloured at standard steps where both $v^*,u^*$ are not $c$-full.
The above estimates show $|H'| \ge (1-3\dD')|H[V_1,V_2]|$. 
Then $|H_c[V_1,V_2]| \ge X = \sum_j X_j$, where $X_j$ is the indicator 
that the $j$th coloured edge of $H'$ receives colour $c$.
At each standard step, in some round $i$ with $v^*=v$,
the history $\mc{F}_{i,v}$ and the choice of $u^*$
determines whether $v^*u^*$ is in $H'$ 
and so could be counted by some $X_j$, then
$\mb{E}(X_j \mid u^*, \mc{F}_{i,v}) = 1/(m \pm 2\tT^2 m)$
by \cref{lem:step}.1.
 Thus we can couple $H'$ to a binomial variable with mean
$>(1-3\dD')|H[V_1,V_2]| \cdot 1/(m \pm 2\tT^2 m) > (1-1.1\dD) |V_1||V_2| d/n$.
Using Chernoff and $1/d \ll \tT \ll \dD'$ we deduce
$\mb{P}( |H_c[V_1,V_2]| < (1-2\dD) |V_1||V_2| d/n ) < \tT^n$, say.
Taking a union bound over $m<n$ colours
and $<4^n$ choices of $V_1,V_2$, the lemma follows.  
\end{proof}  

\subsection{Analysis of algorithm}

To complete the proof of \cref{lem:rough},
and so of \cref{thm:main+},
it remains to prove \cref{lem:danger},
i.e.\ that the algorithm aborts with probability $<\sqrt{\tT}$.

\begin{proof}[Proof of \cref{lem:danger}]
The input to the algorithm
is a random graph $H$ sampled from a $Cdm/n$-spread distribution
on $(1 \pm \dD)dm$-regular subgraphs of $K_{n,n}$.

We assume that $H$ satisfies the good event $G$ above,
so in particular $H$ is $(3\dD',\eta dm/n)$-sparse.
Let $U$ be the set of unsafe vertices and
$D$ be the set of vertices that have ever been dangerous.
We claim that if $|U| \ge \tT n$ then $|D| \ge \tT n/2$.
Indeed, suppose not and consider $U' \sub U$ with $|U'|=\tT n$
consisting of $D$ and the first $\tT n - |D|$ vertices 
added to $U$ due to having $> 2\eta dm$ unsafe neighbours.
Then $2\eta dm (\tT n - |D|)  < |H[U']| \le |U'| \eta dm$,
so $2(\tT n - |D|) < \tT n$, so the claim holds.

We define a stopping time $\tau$ 
as the first step when we first have
$\ge \tT n$ unsafe vertices or some bad colour, 
or $\infty$ if there is no such step.
We want to show $\mb{P}(\tau<\infty)<\sqrt{\tT}$.
To estimate $\mb{P}(\tau<\infty)$, we will use the fact
that if $\tau=s$ then the algorithm did not abort 
at any step before $s$.

We claim that $\{\tau<\infty\} \sub \bigcup_{i=1}^4 E_i$, where
\begin{itemize}
\item $E_1$ is the event that there are
$\ge \tT^2 n$ vertices $v$ each with
$\ge \tT dm$ edges coloured by an exceptional step,
\item $E_2$ is the event that there are
$\ge \tT^2 n$ vertices $v$ each with
some early round $i$ failing $\col(v) = 2i \pm f(i) dm$,
\item $E_3$ is the event that there is a bad colour,
i.e.\ some $c$ with $> \tT^4 n$ vertices 
that have ever been $c$-full or $c$-sparse.
\item $E_4$ is the event that there are
$\ge \tT^2 n$ vertices $v$ that are blocked or attacked.
\end{itemize}

To see this claim, we first note that $E_3$ covers
the event of aborting due to a bad colour,
so it remains to consider the event of aborting
due to having $>\tT n$ unsafe vertices,
and so $\ge \tT n/2$ vertices that have ever been dangerous,
and so atypical or exceptional
or blocking or attacking or blocked or attacked.
By definitions of $E_1$, $E_2$ and $E_4$ this requires $\ge \tT n/4$
vertices that are blocking or attacking.
However, if we have $>.1\tT n$ vertices that are blocking / attacking
then there are $>.1\tT n \cdot \tT^2 m$ pairs $(v,c)$
such that $v$ has ever been $c$-full / $c$-sparse;
some $c$ must occur in $>.1\tT n \cdot \tT^2 > \tT^4 n$
such pairs and so is a bad colour. The claim follows.
 
We start by bounding $\mb{P}(E_1)$.
We note that any safe vertex $v$ is not attacking,
so has $< \tT^2 m$ colours $c$ such that 
$v$ has ever been $c$-sparse, so there are 
$< \tT^2 dm$ exceptional steps with $v^*=v$.
The same holds for any safe neighbour of $v$.
If $v$ is exceptional then we have 
$> .9\tT dm$ exceptional steps with $u^*=v$.
For any exceptional step with $v^*=w$ at any neighbour $w$ of $v$, 
which must occur in an early round, by \cref{lem:step}.2
the conditional probability of choosing $u^*=v$ is $< 2/(\eps dm)$.
The total number of such steps is dominated by a binomial with mean
$< dm \cdot \tT^2 dm \cdot 2/(\eps dm) < \tT^{1.9} dm$.
as $\tT \ll \eps$. By Chernoff, the probability 
that $v$ becomes exceptional is $< \tT^3$, say. 
By Markov we deduce $\mb{P}(E_1)<\tT$.

Next we bound $\mb{P}(E_2)$.
Consider any $v$ that is atypical but not exceptional,
so that $<\tT dm$ edges at $v$ are coloured by an exceptional step.
While $v$ was safe, $< 2\eta dm$ edges at $v$ were coloured by cleaning.
Suppose $v$ became atypical in some (early) round $i'$ and 
let $S'(i',v)$ count standard steps with $u^*=v$ while $v$ was safe.
In each previous round we had one standard 
or exceptional step with $v^*=v$,
so $|S'(i',v) - i'| > (f(i')-2\eta)dm$.
We recall $f(i) = 3\eta (1 + \eps^{-2}/dm)^i$
and note that $\sum_{i \le i'} f(i) < 2\eps^2 dm f(i')$.
In any early round $i$ before $v$ becomes atypical, writing $G^i=G$,
there are $|G^i(v)|=dm-\col(v)=dm-2i \pm f(i)dm$ safe neighbours $w$ of $v$.
For each $w \in G^i(v)$, by \cref{lem:step}.2 if there is a standard step with $v^*=w$ 
then it chooses $u^*=v$ with conditional probability 
$(1 \pm 2f(i)\eps^{-1})/(dm-2i)$.

Consider independent Bernoulli variables $X^1_{i,w}$, $X^0_{i,w}$ 
for $w \in G^i(v)$ with each $\mb{E}X^1_{i,w} = 1/(dm-2i)$
and $\mb{E}X^0_{i,w} = 3f(i)\eps^{-1}/(dm-2i)$.
Let $X^j = \sum_{i \le i'} \sum_{w \in G^i(v)} X^j_{i,w}$ for $j=0,1$.
Let $X^*$ be the sum of $X^1_{i,w}$ over all $(i,w)$ 
where there is an exceptional step at $w$ in round $i \le i'$.
Then we can couple $S'(i',v) \in X^1 - X^* \pm X^0$:
for each standard step in round $i$ with $v^* = w \in G^i(v)$
we can construct the indicator of the event $\{u^*=v\}$
by starting with the term $X^1_{i,w}$ in $X^1 - X^*$  
and possibly correcting it via the term $\pm X^0_{i,w}$,
subsampled to give the correct conditional probability given the history.

We note that
$\mb{E}X^1 = \sum_{i \le i'} |G^i(v)|/(dm-2i) 
 = i' \pm \sum_{i \le i'} f(i)\eps^{-1} = i' \pm 2\eps f(i') dm$ 
and $\mb{E}X^0 = \sum_{i \le i'} |G^i(v)| \cdot 3f(i)\eps^{-1}/(dm-2i)
 < \sum_{i \le i'} 4f(i)\eps^{-1} < 8\eps f(i') dm$, 
as $|G^i(v)| < 1.1(dm-2i)$ for early $i$.
Also, as each $w \in G^i(v)$ is not exceptional,
$<\tT dm$ edges at $w$ are coloured by an exceptional step,
so $\mb{E}X^* < dm \cdot \tT dm \cdot 2/ \eps dm < \tT^{.9} dm$, as $\tT \ll \eps$.
We can cover the event that $v$ becomes atypical by the events
$\{ |X^1-i'| > .9f(i')dm \}$ or
$\{ |X^0| > 16\eps f(i')dm\}$ or
$\{ |X^*| > 2\tT^{.9} dm\}$.
Thus by Chernoff, $v$ becomes atypical with probability $<\tT^3$, say.
By Markov we deduce $\mb{P}(E_2)<\tT$.

To bound $\mb{P}(E_3)$ and $\mb{P}(E_4)$ we first set up couplings
so that we can use Chernoff bounds to estimate the probability 
of any vertex $v$ becoming $c$-full or $c$-sparse for some colour $c$;
we will bound this probability by $e^{-\sqrt{d}}$.

We consider any step before the stopping time $\tau$
at which $v$ first becomes $c$-full or $c$-sparse.
Suppose first that this is at an early round $i$
(which must be the case if $v$ becomes $c$-sparse).
While $v$ is safe it is not atypical or exceptional,
so there are $2i \pm \eta^{.9} dm$ coloured edges at $v$,
of which $<2\eta dm$ are coloured by cleaning
and $<\tT dm$ are coloured by an exceptional step.
It it becomes unsafe then $\le d^{.2}$ further edges at $v$
are coloured before it is cleaned.
Thus there are $2i \pm 2\eta^{.9} dm$ 
edges at $v$ coloured by a standard step.

While $v$ is safe it is not blocked,
so has $<d^{.9} m$ neighbours that are $c$-full.
Let $D_i(v,c)$ count edges at $v$ coloured at standard steps
where $v^*$ is a $c$-full neighbour of $v$.
We fix independent Bernoulli variables $D(w,j,c)$ 
for each $w \in H(v)$, $j \in [dm]$
with each $\mb{E}D(e,c) = 2/\eps dm$. 
We couple $D_i(v,c) \le D^+_i(v,c)$ where at each
standard step with $v^*=w$ some $c$-full neighbour of $v$
we couple choosing $u^*=v$ below 
some new $D(w,j,c)$ added to $D^+_i(v,c)$.
We add further such independent Bernoulli's so that
$D^+_i(v,c)$ is a sum of exactly $d^{.9} m \cdot dm$ 
such variables, so is binomial with mean $< 2\eps^{-1} d^{.9} m$.
By Chernoff, we have $D^+_i(v,c) < d^{.91} m$
with failure probability $<.1e^{-\sqrt{d}}$, say.

Let $S_i(v,c)$ count edges coloured at $v$ by standard steps
at which $v^*$ is not $c$-full. At any such step 
before $v$ is $c$-full or $c$-sparse,
the conditional probability 
of using colour $c$ is $(1 \pm 2\tT)/m$;
indeed this holds by \cref{lem:step}.1 if $u^*=v$,
or by \cref{lem:step}.3 if $v^*=v$
(noting that $c \in C_{v^*}$ as $i$ is early).

We fix independent Bernoulli variables $X(e,c),Y(e,c)$ for each $e \in E(H)$
with each $\mb{E}X(e,c) = 1/m$ and $\mb{E}Y(e,c) = 3\tT/m$. 
We can couple $S_i(v,c) \in X_i(v,c) \pm Y_i(v,c)$, 
where $X_0(v,c)=Y_0(v,c)=0$
and each time we colour some edge $e$ at $v$ by a standard step
at which $v^*$ is not $c$-full
we add $X(e,c)$ to $X_i(v,c)$ and $Y(e,c)$ to $Y_i(v,c)$.

If $D^+_i(v,c) < d^{.91} m$ then
$X_i(v,c) \in X^\pm_i(v,c) := [X^-_i(v,c),X^+_i(v,c)]$,
where $X^\pm_i(v,c)$ sum the first $2i \pm 2\eta^{.9} dm$
values of $X(e,c)$ at $v$, using new independent Bernoulli variables
if there are not enough such edges at $v$, and similarly for $Y$.
We note that although the choice of each $e$ depends on the history of the process,
these choices do not affect the distribution of $X^\pm_i(v,c)$ and $Y^\pm_i(v,c)$,
which are binomial with
$\mb{E}X^\pm_i(v,c) = 2i/m \pm 2\eta^{.9}d$ and $\mb{E}Y^+_i(v,c) \le 3\tT d$.

If $v$ becomes $c$-sparse in round $i$
then we have $X_i(v,c) - Y_i(v,c) < 2i/m - \dD d$.
Consider $i' = \bfl{i/\eps dm} \eps dm$.
Then $0 \le i-i' \le \eps dm$ and at step $i'$ we had 
$X^-_{i'}(v,c) - Y^+_{i'+\eps dm}(v,c) < 2i/m - \dD d < 2i'/m - .9\dD d$,
so $Y^+_{i'+\eps dm}(v,c) > 6\tT d$ or $X^-_{i'}(v,c) < 2i'/m - .8\dD d$.
By Chernoff, taking a union bound over $< \eps^{-1}$ such $i'$,
we can bound the probability that $v$ becomes $c$-sparse by $.1e^{-\sqrt{d}}$, 
say, as $1/d \ll \tT \ll \eta \ll \eps \ll \dD$.

Next we consider the event that $v$ 
becomes $c$-full before it becomes $c$-sparse
in some early round $i$.
As $v$ was not $c$-sparse, edges at $v$ coloured $c$
by exceptional steps must have $u^*=v$ and $v^*=w \in G(v)$.
While $v$ is safe it is not attacked, 
so $< d^{.9} m$ such $w$ can be $c$-sparse
and so have up to $d$ exceptional steps with $v^*=w$ and $c^*=c$.
By \cref{lem:step}.2 such a step chooses $u^*=v$
with conditional probability $< 2/(\eps dm)$.
We couple the number of such steps as  $E_i(v,c) < E^+_i(v,c)$,
where $E^+_0(v,c)=0$, each time we colour some edge $e$ at $v$ 
by colour $c$ using an exceptional step we add $E(e,j,c)$ to $E^+_i(v,c)$,
where $(E(e,j,c))$ are independent Bernoulli's with $\mb{E}E(e,j,c) = 2/(\eps dm)$,
and we add new independent Bernoulli's so that $E^+_i(v,c)$
is a sum of exactly $d^{1.9} m$ such variables, 
so is binomially distributed no matter which edges $e$ are chosen,
with $\mb{E}E^+_i(v,c) = 2\eps^{-1} d^{.9}$.
As $1/d \ll \tT \ll \eps$, by Chernoff
we have $\mb{P}(E^+_i(v,c) > \tT d) < .1e^{-\sqrt{d}}$, say. 

Next, by \cref{lem:clean} each time we clean a neighbour of $v$
we use colour $c$ at $v$ with conditional probability $\le L/m$.
We couple the number of such steps as $C_i(v,c) < C^+_i(v,c)$,
where $C^+_0(v,c)=0$ and each time we clean some neighbour $w$ of $v$ 
we add $C_w(v,c)$ to $C^+_i(v,c)$, where $(C_w(v,c): w \in H(v))$ 
are independent Bernoulli's with $\mb{E}C_w(v,c) = L/m$,
and we add new independent Bernoulli's so that $C^+_i(v,c)$
is a sum of exactly $3\eta dm$ such variables, so is binomially 
distributed with $\mb{E} C^+_i(v,c) = 3L\eta d$
 no matter which neighbours of $v$ are cleaned.
As $1/d \ll \tT \ll \eta \ll 1/L$, by Chernoff 
we have $\mb{P}(C^+_i(v,c) > \sqrt{\eta} d) < .1e^{-\sqrt{d}}$, say. 

For any $v$ with $D_i(v,c) < d^{.91} m$,
$E_i(v,c) \le \tT d$ and $C_i(v,c) \le \sqrt{\eta} d$,
if $v$ becomes $c$-full in round $i$
then we have $X_i(v,c) + Y_i(v,c) > 2i/m + .9\dD d$,
so at step $i' = \bcl{i/\eps dm} \eps dm$ we will have 
$X^+_{i'}(v,c) + Y^+_{i'}(v,c) > 2i'/m + .8\dD d$, which by Chernoff
occurs for some such $i'$ with probability $<.1e^{-\sqrt{d}}$, say.

Now consider the event that $v$ does not become $c$-sparse or $c$-full 
in some early round, then becomes $c$-full in some late round.
At the start of the late rounds $\le (1-\eps+\dD)d$ 
edges incident to $v$ have received colour $c$,
so $\ge \dD d$ such edges will receive colour $c$ during the late rounds.
While $v$ was safe it was not atypical, so at the start of the late rounds
$< (\eps + \eta^{.8}) dm < 2\eps dm$ edges at $v$ are uncoloured.
All such edges are coloured by standard steps or cleaning.
For cleaning steps we continue to update $C^+_i(v,c)$ as above.
We bound such edges in standard steps as $L_i(v,c) \le L^+_i(v,c)$,
where similarly to above using \cref{lem:step}.1
we update $L^+_i(v,c)$ by adding independent
Bernoulli's $L(e,c)$ with $\mb{E}L(e,c) = 2/m$, adding new variables
so that we have exactly $2\eps dm$ such variables,
so $L^+_i(v,c)$ is binomial with mean $4\eps d$.
Thus by Chernoff the event considered by this paragraph
has probability $<.1e^{-\sqrt{d}}$, say.

Combining the above estimates, we can bound the probability
of $v$ becoming $c$-full or $c$-sparse by $e^{-\sqrt{d}}$, as desired.

Next we will bound $\mb{P}(E_3)$.
We fix any colour $c$ and bound the number of $c$-full vertices
by $Z = \sum_v Z(v,c)$, where each $Z(v,c)$
is the indicator of the event that
\[ \Ss^+_i(v,c) := X^+_i(v,c) + Y^+_i(v,c) + E^+_i(v,c) + L^+_i(v,c) + C_i(v,c) + D_i(v,c) \]
is $> 2i/m + \dD d$ in some early round $i$
or $\ge (1 + 2\dD)d-1$ in some late round.
(Note that here we consider $C,D$ rather than $C^+,D^+$.)
Then $\mb{E}Z < e^{-\sqrt{d}} n$ by the above estimates.

We note that if $W$ is an independent set
then $(X^+_i(v,c): v \in W)$ are independent,
and similarly replacing $X$ by $Y$, $E$ or $L$ (but not by $C$ or $D$).
Indeed, each $X^+_i(v,c)$ is a sum of $2i + 2\eta^{.9} dm$
independent iid Bernoulli's $X(e,c)$, 
where the choices of $e$ for each $v \in W$ 
depend on the history of the process and each other, 
but the choice of $e$ does not depend on $X(e,c)$ and
the sets of all possible $e$ are disjoint for distinct $v \in W$.

We write $Z \le Z_0+Z_1+Z_2$, where 
$Z_1$ counts $v$ with $C_i(v,c) > .2\dD d$,
$Z_2$ counts $v$ with $D_i(v,c) > d^{.91} m$,
and $Z_0$ counts $v$ where 
$T_i(v,c) := X^+_i(v,c) + Y^+_i(v,c) + E^+_i(v,c) + L^+_i(v,c)$ 
is $> 2i/m + .4\dD d$ in some early round $i$
or $\ge (1 + 1.4\dD)d-1$ in some late round.
The above estimates show that all
$\mb{E}Z_j < e^{-\sqrt{d}} n$.
Furthermore, $T_i(v,c)$ and $T_i(v',c)$
are independent whenever $v,v'$ are non-adjacent, so $\var(Z_0) \le (dm+1)n$.
By Chebyshev, as $n \ge K \gg 1/\tT$ and $dm \le 2\log^4 n$ 
we have $\mb{P}(Z_0 > .2\tT^4 n) \le \var(Z_0)/(.1\tT^4 n)^2 < n^{-.9}$, say.

For $Z_1$ and $Z_2$, an approach via coupling would be more complicated
due to the use of spread perfect matchings in cleaning,
so we will adopt a simpler martingale argument. Any any step in the algorithm 
let $C(v)$ denote the number of neighbours of $v$ that have been cleaned, 
and $C(v,c)$ denote the corresponding value of $C_i(v,c)$.
We consider a \emph{predictor} $P=\sum_v P_v$ for $Z_1$,
where each $P_v = \min \{ 1, \exp[ C(v,c) - 4C(v)L/m - .1\dD d ] \}$.
The starting value of $P$ is $ne^{-.1\dD d}$.
As $C(v) < 2\eta dm + d^{1.2}$ and $\eta \ll 1/C \ll \dD \ll 1/L$,
if $C(v,c) > .2\dD d$ then $P_v=1$, so $Z_1 \le P$.
Now we claim that $P$ is a supermartingale.
To see this, we consider any cleaning step,
let $Q = \sum_v Q_v$ denote the updated value of $P$,
and show $\mb{E}'[Q_v] \le P_v$ for each $v$, where $\mb{E}'$ 
denotes conditional expectation given the history.
Indeed, we can assume $P_v<1$ and that the
cleaned vertex colours an edge at $v$.
Then we increase $C(v)$ by $1$, and $C(v,c)$ either increases by $1$
with probability $\le L/m$ or is unchanged otherwise, so as $m \ge \sqrt{d} \gg L$
we have $\mb{E}'[Q_v]/P_v \le e^{-4L/m}( (L/m) \cdot e + (1-L/m) \cdot 1) <1$.
Furthermore, $|Q - P| \le \DD(H) = dm$,
so the variance proxy is $<ndm(dm)^2 < n(\log n)^{20}$, say.
By Azuma's inequality (\cref{lem:azuma}),
we conclude $\mb{P}(Z_1 > .2\tT^4 n) < e^{-\sqrt{n}}$, say.

Similarly, for $Z_2$ we consider a predictor, abusively also denoted $P=\sum_v P_v$,
with each $P_v = \min \{ 1, \exp[ D(v,c) - 8D(v)/\eps dm - d^{.9} m] \}$,
where $D(v)$ denotes the number of standard steps 
with $v^*=w$ a $c$-full neighbour of $v$
and $D(v,c)$ the number of these with $u^*=v$.
We note that $Z_2 \le P$, as $D(v) \le d^{1.9} m$,
so if $D(v,c) > d^{.91} m$ then $P_v=1$.
The starting value is $ne^{- d^{.9} m}$,
and $P$ is a supermartingale, as each
$\mb{E}'[Q_v]/P_v \le e^{-8/\eps dm}( (2/\eps dm) \cdot e + (1-2/\eps dm) \cdot 1) <1$.
The variance proxy is $< n(\log n)^{20}$, so by Azuma
$\mb{P}(Z_2 > .2\tT^4 n) < e^{-\sqrt{n}}$, say.

Taking a union bound over colours $c \in [m]$,
we deduce $\mb{P}(E_3) < 2mn^{-.9} < n^{-.8}$, as $m < \log^4 n$.

It remains to bound $\mb{P}(E_4)$.
First we consider the probability that any vertex $v$
is blocked / attacked, i.e.\ has some colour $c$ such that 
$\ge d^{.9} m$ neighbours of $v$ are $c$-full / $c$-sparse.
We fix $c$ then take a union bound later. 
Recalling that $|H[V]| \le 12|V|$ whenever $|V| \le dm$,
by Tur\'an's Theorem we can choose an $H$-independent set $W$ of say 
$d^{.8} m$ neighbours of $v$ that are $c$-full / $c$-sparse. 
If they are $c$-sparse then for each $w \in W$ we have 
$X^-_{i_w}(w,c)-Y^+_{i_w}(w,c) \le 2i_w/m - \dD d$ in some early round $i_w$;
we denote this event by $A_1(v,W)$.
If they are $c$-full then for each $w \in W$
we have $\Ss^+_{i_w}(w,c) > 2i_w/m + \dD d$ in some early round $i_w$
or $\Ss^+_{i_w}(w,c) \ge (1 + 2\dD)d-1$ in some late round;
we let $A_2(v,W)$ be the event that $T_{i_w}(w,c)$ 
is $> 2i_w/m + .5\dD d$ in some early round $i_w$
or $\ge (1 + 1.5\dD)d-1$ in some late round, for all $w \in W$;
we let $A_3(v,W)$ be the event that 
some $C_{i_w}(w,c)>.2\dD d$ for all $w \in W$;
we let $A_4(v,W)$ be the event that 
some $D_{i_w}(w,c)>d^{1.9}m$ for all $w \in W$.

We fix $c$ and bound the number of vertices blocked / attacked for $c$
by $Z' = \sum_{i=1}^4 Z'_i$, where each $Z'_i = \sum_v Z'_{i,v}$ 
and $Z'_{i,v}$ is the indicator that some $A_i(v,W)$ occurs.
We can apply the second moment method as above to $Z'_1$ and $Z'_2$.
Indeed, by independence we have 
$\mb{P}(A_i(v,W)) < e^{-\sqrt{d}|W|}$ for $i=1,2$.
Taking a union bound over $<m2^{dm}$ choices of $c$ and $W$,
we thus have $\mb{E}(Z'_1+Z'_2) < ne^{-d^{1.2}m}$, say.
Furthermore, $Z'_{i,v}$ and $Z'_{i,v'}$ for $i=1,2$ are independent
whenever $v$ and $v'$ are at distance $\ge 4$ in $H$,
as then they depend on disjoint sets of independent variables.
For $i=1,2$ we deduce $\var(Z'_i) \le (dm+1)^4 n$, so by Chebyshev,
$\mb{P}(Z'_i > .2\tT^2 n) \le \var(Z'_i)/(.1\tT^2 n)^2 < n^{-.9}$, say.

For $Z'_3$, we again handle cleaning via a martingale.
We consider a predictor $P'=\sum_v P'_v$ for $Z'_3$, 
with each $P'_v = \min \{ 1, \sum_W P'_{v,W} \}$,
where $P'_{v,W} = \exp[ \sum_{w \in W} ( C(w,c) - 4C(w)L/m - .1\dD d ) ]$
and $W$ ranges over independent sets of $d^{.8} m$ neighbours of $v$.
The starting value of $P'$ is 
$< n \tbinom{dm}{d^{.8} m} (e^{- .1\dD d })^{d^{.8} m} < ne^{-d^{1.7} m}$, say.
As each $C(w) < 2\eta dm + d^{1.2}$ and $\eta \ll 1/C \ll \dD \ll 1/L$,
if there is some such $W$ such that
$C(w,c) > .2\dD d$ for all $w \in W$ then $P'_v=1$, so $Z'_3 \le P'$.
Now we claim that $P'$ is a supermartingale.
To see this, we consider any step where we clean some vertex $u$, 
for any $v,W$ let $Q'_{v,W}$ denote the updated value of $P'_{v,W}$,
and show $\mb{E}'[Q'_{v,W}] \le P'_{v,W}$.
Write $W' = X_u \cap W$. For $w \in W'$ let $E_w$
denote the event that $uw$ receives colour $c$.
By \cref{lem:clean}, for any $X \sub W'$ we have
$\mb{P}( \bigcap_{w \in X} E_w) \le (L/m)^{|X|}$.
Thus $\mb{E}'[Q'_{v,W}]/P'_{v,W} \le e^{-4|W'|L/m}\sum_{X \sub W'} (eL/m)^{|X|}
= e^{-4|W'|L/m} (1+eL/m)^{|W'|} < 1$, as claimed.
Furthermore, $|Q - P| \le \DD(H)^2 = (dm)^2$,
so the variance proxy is $<ndm(dm)^4 < n(\log n)^{21}$, say.
By Azuma we conclude $\mb{P}(Z'_3 > .2\tT^2 n) < e^{-\sqrt{n}}$, say.

Similarly, we bound $Z'_4$ by a predictor, abusively also denoted 
$P'=\sum_v P'_v$ with notation as for $Z'_3$, but now with each
$P'_{v,W} = \exp[ \sum_{w \in W} ( D(w,c) - 8D(w)/\eps dm - d^{.9} m ) ]$.
We have $Z'_4 \le P'$ as if there is some
independent set $W$ of $d^{.8} m$ neighbours of $v$
with $D(w,c) > d^{.91}m$ for all $w \in W$ then $P'_v=1$.
The starting value of $P'$ is $ne^{- d^{.9} m}$.
To see that $P'$ is a supermartingale, we consider any $P'_v<1$ 
and any term $P'_{v,W}$ updated to $Q'_{v,W}$
by some standard step with $v^*=u$ where $u$ is $c$-full.
This step chooses $u^*=w$ for at most one $w \in W$,
with probability $\le |W| \cdot 2/\eps dm$,
so $\mb{E}'[Q'_{v,W}]/P'_{v,W} \le e^{-8|W|/\eps dm}
 ( (2|W|/\eps dm) \cdot e + (1-2|W|/\eps dm) \cdot 1) <1 $.
The variance proxy is $< n(\log n)^{21}$, so by Azuma
$\mb{P}(Z'_4 > .2\tT^2 n) < e^{-\sqrt{n}}$, say.

Taking a union bound over colours $c \in [m]$,
we deduce $\mb{P}(E_4) < 3mn^{-.9} < n^{-.8}$.
To conclude, we bound the probability of aborting as
$\mb{P}(\tau<\infty) \le \sum_{i=1}^4  \mb{P}(E_i) <\sqrt{\tT}$, as desired.
\end{proof}


\begin{thebibliography}{99}

\bibitem{CH} C. J. Casselgren and R. H\"aggkvist, 
Coloring complete and complete bipartite graphs from random lists, 
Graphs Combin. 32:533--542 (2016).

\bibitem{E} P. Erd\H{o}s, 
On the combinatorial problems which I would most like to see solved, 
Combinatorica 1:25--42 (1981).

\bibitem{FKNP} K. Frankston, J. Kahn, B. Narayanan and J. Park, 
Thresholds versus fractional expectation-thresholds, 
Ann. of Math. 194:475--495 (2021).

\bibitem{JLR} S.~Janson, T.~\L uczak and A.~Ruci\'nski, {\em Random graphs},
Wiley-Interscience, 2000.

\bibitem{J} A. Johansson, Triangle factors in random graphs, manuscript (2006).

\bibitem{JKV} A. Johansson, J. Kahn, and V. Vu, 
Factors in random graphs, 
Random Structures Algorithms 33:1--28 (2008).

\bibitem{KK} J. Kahn and G. Kalai, 
Thresholds and expectation thresholds, 
Combin. Probab. Comput. 16:495--502 (2007).

\bibitem{KKKMO} D.Y. Kang, T. Kelly, D. K\"uhn, A. Methuku and D. Osthus,
Thresholds for Latin squares and Steiner triple systems: Bounds within a logarithmic factor,
arXiv:2206.14472 (2022).

\bibitem{K} P. Keevash, The existence of designs, arXiv:1401.3665 (2014).
 
\bibitem{LS} Z. Luria and M. Simkin, 
On the threshold problem for Latin boxes, 
Random Structures Algorithms 55:926--949 (2019).

\bibitem{PP} J. Park and H.T. Pham, 
A proof of the Kahn–Kalai conjecture, arXiv:2203.17207 (2022).

\bibitem{PSSS} H.T. Pham, A. Sah, M. Sawhney and M. Simkin, 
A toolkit for robust thresholds, arXiv:2210.03064 (2022).

\bibitem{SSS} A. Sah, M. Sawhney and M. Simkin, 
Threshold for Steiner triple systems, arXiv:2204.03964 (2022).

\bibitem{S} M. Simkin, 
$(n,k,k-1)$-Steiner systems in random hypergraphs, arXiv:1711.01975 (2017).

\bibitem{T} M. Talagrand,
Are many small sets explicitly small?, 
Proc. 42nd ACM STOC (2010).

\end{thebibliography}
\end{document}